\documentclass[a4paper,11pt]{article}

% Bilder: bsp_end.mws

\usepackage{amsmath,amssymb,epsfig%,showkeys
}
\usepackage{subfigure}
\usepackage{paralist}
%\psfigurepath{bilder}
%\includeonly{
%beispiel}
%\input prepictex
%\input pictex
%\input postpictex

% Begin Siggi Macros

\newcommand{\bR}{{\bf R}}

\newcommand{\be}{ \tilde{e}}

\newcommand{\btx}{ \tilde{x}}

\newcommand{\tv}{ \tilde{v}}
\newcommand{\tn}{ \tilde{n}}

\newcommand{\bty}{ \tilde{y}}
\newcommand{\btz}{ \tilde{z}}

\newcommand{\diag}{\operatorname{diag}}

\newcommand{\co}{\operatorname{co}}

\newcommand{\n}{\nonumber}

\newcommand{\cC}{{\mathcal C}}
\newcommand{\cD}{{\mathcal D}}

\newcommand{\cJ}{{\mathcal J}}

\newcommand{\cN}{{\mathcal N}}

\newcommand{\cT}{{\mathcal T}}

\newcommand{\T}{{\mathfrak S}}

% end Siggi Macros

\makeatletter
\@addtoreset{equation}{section}

\makeatother

\newtheorem{theorem}{Theorem}[section]
\newtheorem{definition}[theorem]{Definition}
\newtheorem{lemma}[theorem]{Lemma}
\newtheorem{remark}[theorem]{Remark}

\newenvironment{proof}{\par \vspace{0.3cm} \noindent{\sc Proof:} \ignorespaces}%
{\nolinebreak\hfill $\square$\par \medskip}

\newcommand{\diam}{\mathop{\mathrm{diam}}\limits}

% Brackets

\def\R{ \mathbb {R}}

\newcommand{\N}{\mathbb{N}}
\def\cN{\mathcal{N}}

\newcommand{\tx}{\tilde{x}}

\newcommand{\ty}{\tilde{y}}

\newcommand{\tf}{\tilde{f}}

\textwidth14cm
\oddsidemargin1.5cm

\begin{document}

\sloppy

\newcounter{fig}

\title{Construction of a CPA contraction metric for\\ periodic orbits using semidefinite optimization\thanks{This work  was supported by the Engineering and Physical Sciences Research Council [grant number
EP/J014532/1].}}

\author{Peter Giesl\thanks{
email \texttt{p.a.giesl@sussex.ac.uk}}\\
         Department of Mathematics\\
         University of Sussex\\
         Falmer BN1 9QH\\
         United Kingdom
         \and
Sigurdur Hafstein\thanks{
email \texttt{sigurdurh@ru.is}}\\
School of Science and Engineering\\
  Reykjavik University\\
Menntavegi 1\\
IS-101 Reykjavik\\
Iceland}

\date{\today}

\maketitle

\begin{abstract}
A Riemannian metric with a local contraction property can be used to prove existence and uniqueness of a periodic orbit and determine a subset of its basin of attraction. While the existence of such a contraction metric is equivalent to the existence of an exponentially stable periodic orbit, the explicit construction of the metric is a difficult problem.

In this paper, the construction of such a contraction metric is achieved by formulating it as an equivalent problem, namely a feasibility problem in semidefinite optimization. The contraction metric, a matrix-valued function, is constructed as a continuous piecewise affine (CPA) function, which is affine on each simplex of a triangulation of the phase space. The contraction conditions are formulated as conditions on the values at the vertices.

The paper states a semidefinite optimization problem. We prove on the one hand that a feasible solution of the optimization problem determines a CPA contraction metric and on the other hand that the optimization problem is always feasible if the system has an exponentially stable periodic orbit and the triangulation is fine enough. An objective function can be used to obtain a bound on the largest Floquet exponent of the periodic orbit.
 \end{abstract}

\section{Introduction}

%
%Ordinary differential equations (ODEs) are a major modeling tool in all sciences.
%Both for deriving the model and for its analysis it is crucial to understand the dynamical properties of an ODE. The basin of attraction of attractors describes the set of initial values
%such that solutions converge towards the attractor and is of fundamental interest.

In this paper we consider a time-periodic ODE of the form $\dot{x}=f(t,x)$, where $f(t,x)=f(t+T,x)$ for all $(t,x)\in \mathbb R\times \mathbb R^n$ with a fixed period $T>0$, and study the basin of attraction of a periodic solution. %Such equations occur in applications with periodic forces or seasonal influences.

The basin of attraction can be computed using a variety of methods:
{\bf Invariant manifolds} form the boundaries of basins of attraction, and their computation can thus be used to find a basin of attraction \cite{krauskopf2,b-wiggins}. However, this method needs additional arguments to ensure that a certain region is the basin of attraction of an attractor, and that, for example, there are no other attractors in that region. Other approaches to compute the basin of attraction are for example the {\bf cell mapping approach} \cite{Hsu} or {\bf set oriented
methods} \cite{Dellnitz_Junge} which divide the phase space into cells and compute
the dynamics between these cells.%, see also for example \cite{osipenko}.
%A related approach computes the eigenvalues of a probability distribution \cite{Dellnitz_Junge2}.

%{\bf Lyapunov functions}

{\bf Lyapunov functions} \cite{lyap}  are a natural way of analysing the basin of attraction, since they start from the attractive solution, not from the boundary. Moreover,  through their level sets, they  give additional
information about the basin of attraction than just the
boundary.
%In 1893, Lyapunov \cite{lyap} introduced his direct or second method, where
%he sought to show the stability of an
%equilibrium without knowledge of the solution of the differential
%equation, but by only using the differential equation itself.
%
%A {\bf Lyapunov function} \cite{lyap} for an equilibrium or, more generally, an invariant set $\Omega$ of the autonomous equation $\dot{x}=f(x)$, is a scalar-valued, smooth function $V\colon \mathbb R^n\rightarrow \mathbb R$ such that (i) $V$ attains a local minimum at $\Omega$ and (ii) $V$ is decreasing along solutions of the ODE, i.e. $V'(x)<0$ for $x\in U\setminus \Omega$ where $V'(x):=\langle \nabla V(x),f(x)\rangle$ denotes the orbital derivative and
% $U$ is a neighbourhood of $\Omega$.
%The basin of attraction of a solution
%can be characterised by sublevel sets of a Lyapunov function.
{\bf Converse theorems} which guarantee the
 existence of a Lyapunov
function under certain conditions have been given by many authors,
for an overview see \cite{hahn}.
%There are many converse theorems ensuring the existence of Lyapunov functions in different situations,
% for asymptotic stability was given by Massera in 1949 \cite{massera} and it was improved by many authors in several directions.
However, all converse theorems offer no general
method to analytically construct Lyapunov functions.

Recently, several methods to construct Lyapunov functions have been proposed:
%
%There are construction methods for special Lyapunov functions like
%quadratic, polynomial, piecewise linear, piecewise quadratic or
%polyhedral.
%Often these methods can only be applied to special differential
%equations. Juli\'{a}n et al. \cite{julian} approximated
% the differential equation by a piecewise linear right-hand side
% and constructed a
%piecewise linear Lyapunov function using {\bf linear programming}
%(linear optimization). Hafstein \cite{hafstein} improved this ansatz and
Hafstein  \cite{MON}  constructed a piecewise affine Lyapunov function using {\bf linear programming}.
Parrilo \cite{parrilo} and Papachristodoulou and Prajna in \cite{papac}
consider the numerical construction of Lyapunov functions that are
presentable as sum of squares of polynomials (\textbf{SOS}) for autonomous polynomial systems.  These ideas have been taken further by recent
publications of Peet \cite{Peet2009} and Peet and Papachristodoulou \cite{Peet2010}, where the existence of a polynomial
Lyapunov function on bounded regions for exponentially stable systems in proven.

%, he included an error analysis in his ansatz  and has  implemented the algorithm in
%a C++ computer program.
A different method deals with {\bf Zubov's equation} and computes a
solution of this partial differential equation (PDE). In Camilli et al. \cite{camilli01}, Zubov's method was extended to
control problems.
%; the corresponding generalised Zubov equation is a Hamilton-Jacobi-Bellmann equation.
%This equation has a viscosity solution which can be approximated using standard techniques after
%regularisation at the equilibrium, for example  one can use
% piecewise
%affine approximating functions and adaptive grid techniques, cf.
%Gr\"une \cite{gruene97}. The error estimate here is given in
%terms of  the Lyapunov function and not its orbital derivative.
Giesl  considered a particular Lyapunov function satisfying
% $V'(x)=\sum_{i=1}^n f_i(x) \frac{\partial}{\partial x_i} V(x)= -\|x-x_0\|^2$ for the equilibrium $x_0$, which  is
 a linear PDE which was solved using {\bf meshless collocation},  in particular Radial Basis Functions
\cite{Giesl-07}.
% The advantage of meshless collocation over other methods for solving PDEs is that scattered points can be added to improve the approximation,  no triangulation of the phase space is necessary and the approximating function is smooth.
This method has been extended to time-periodic ODEs \cite{Giesl-Wendland-09-1}.%, but has the disadvantage that the exact location of the attracting periodic orbit must be known.

Lyapunov functions attain their minimum on the attractor and have a negative orbital derivative for all points in the basin of attraction apart from the attractor. Hence, it is necessary to have {\bf exact information} about the attractor in the phase space before one can compute a Lyapunov function. Whereas this information might be easy to obtain in special examples, in general this information is not available.
%As the Lyapunov function does not allow for small errors near the attractor, it is not appropriate for systems where the attractor is not known.
% In particular, periodic orbits in autonomous systems or time-periodic systems are examples for such problems.

\vspace{0.3cm}

\noindent
{\bf Local contraction property -- Borg's criterion}

Another method to characterise the basin of attraction, introduced by
Borg \cite{borg}, uses a local contraction property and does {\bf not} require information about the periodic orbit.
%We denote $\tx=(t,x)\in\mathbb R^{n+1}$.
 Let $M(t,x)$ be a Riemannian metric, i.e. $M\in C^1(\mathbb R\times \mathbb R^n,
\mathbb R^{n\times n})$ such that $M(t,x)$ is a positive definite, symmetric $(n\times n)$ matrix for all $(t,x)$.
  Then $\langle v,w\rangle_{M(\tx)}:=
v^TM(\tx)w$ defines a point-dependent scalar product, where $v,w\in \mathbb R^n$. The sign of the real-valued function $\widetilde{L}_M(t,x)$, cf. (\ref{LdefM1}),
then describes whether the solution through $(t,x)$ and adjacent solutions approach each other with respect to
the Riemannian metric $M$. We define
\begin{eqnarray}
\widetilde{ L}_M(t,x)&:=&\max_{w\in \mathbb R^n,w^TM(t,x)w=1}w^T\left[M(t,x)D_xf(t,x)
+\frac{1}{2}M'(t,x)\right]w,\label{LdefM1}
\end{eqnarray}
where $ M'(t,x)$ denotes the orbital derivative
of $M(t,x)$, which is the derivative along solutions of $\dot{x}=f(t,x)$.

If $\widetilde{L}_M(t,x)$ is negative for all $(t,x)\in K$
where $K$ is a positively invariant, connected set, then $K$ is a subset of the
basin of attraction of a unique periodic orbit in $K$.

The maximum in (\ref{LdefM1}) is taken over all $w\in \mathbb R^n$ with a norm condition, and $\widetilde{L}_M(t,x)<0$ is equivalent to $L_M(t,x)<0$ given by (\ref{cont}), where $\lambda_{max}(\cdot)$  denotes the maximal eigenvalue of a symmetric matrix. Here, we use that $w^T M(t,x) D_xf(t,x)w=w^T D_xf(t,x)^TM(t,x)w$.
\begin{eqnarray}
{ L}_M(t,x)&:=&\max_{w\in \mathbb R^n,\|w\|=1}w^T\big[M(t,x)D_xf(t,x)
+D_xf(t,x)^TM(t,x)
+M'(t,x)\big]w\nonumber\\
&=&\lambda_{max}\left(M(t,x)D_xf(t,x)
+D_xf(t,x)^TM(t,x)
+M'(t,x)\right).\label{cont}
\end{eqnarray}
%\vspace{-0.7cm}

We seek to find a matrix-valued function $M$ satisfying $L_M(t,x)<0$. This is equivalent to the condition that the symmetric matrix
$$M(t,x)D_xf(t,x)
+D_xf(t,x)^TM(t,x)
+M'(t,x)$$ is negative definite. As this is a Linear Matrix Inequality, it can be formulated as a constraint of a semidefinite optimization problem.
% Thus we obtain a \textbf{solution} to the problem of finding a suitable Riemannian metric, not merely an approximation.

While the sufficiency of this local contraction criterion in the autonomous case goes back to \cite{borg,hartman/olech,stenstroem,leonov96}, its necessity was shown in \cite{Giesl-04-1}. The method was extended to time-periodic systems \cite{zaa}.
%, time-almost periodic systems \cite{Giesl-Rasmussen-08-1}, non-smooth systems \cite{Giesl-05-1,Giesl-07-1} and non-autonomous systems on a finite time interval \cite{Giesl-Rasmussen-10};
% and homoclinic orbits \cite{Giesl-Franca};
%Borg's criterion has also been applied  to a three-dimensional equation  describing a nuclear spin generator \cite{sherman}.

The advantage of this method over, for example, Lyapunov functions, is that it does not require information about the position of the periodic orbit. Moreover, the criterion is robust to small errors.

%The Riemannian metric $M$ provides important information concerning the local contraction and sensitivity at each point, which will be explored further in this project.
%This is similar to a Lyapunov function: the requirement that the orbital derivative is negative is also robust, but only away from the attractor, since on the attractor the orbital derivative is zero.

Although the existence of Riemannian metrics has been shown \cite{Giesl-04-1}, it remains a difficult problem to construct them for concrete examples. This is a similar problem to the construction of a (scalar-valued) Lyapunov function, but Borg's criterion requires the construction of a matrix-valued function $M(t,x)$. In the two-dimensional autonomous case, however, there exists a special Riemannian metric of the form $M(x)=e^{2W(x)}I$, where $W$ is a scalar-valued function  \cite{Giesl-04-1}. This can be
%. The existence of such a function $W$ satisfying a linear first-order PDE was established and
used to find an approximation using Radial Basis Functions
\cite{Giesl-07-5}.
%As the condition is robust under small errors, the approximation can be used as a Riemannian metric.
In higher dimensions, however, the existence of such a special Riemannian metric is not true in general  \cite{Giesl-04-1}.  In  \cite{Giesl-09-1}, a combination of a Riemannian metric locally near the periodic orbit with a Lyapunov function further away was used, and the construction was again achieved by Radial Basis Functions. This method, however, heavily depends on information about the periodic orbit, which was obtained  by a numerical approximation of the periodic orbit and its variational equation.
%, whereas in this project we will construct a Riemannian metric without any information of the periodic orbit.

In this paper, we will develop a new method to construct a Riemannian metric to fulfill Borg's criterion, which will use semidefinite optimization and does not require any information about the periodic orbit.

\vspace{0.3cm}
\noindent
{\bf Semidefinite Optimization}

A semidefinite optimization problem for the variables $y_1,\ldots,y_m$ is of the form
%\vspace{-0.1cm}
$$
\begin{array}{ll}
\mbox{minimize}& \sum_{i=1}^m c_iy_i\\
\mbox{subject to}& \sum_{i=1}^m F_i y_i -F_0=X\succeq 0,\end{array}
$$%\vspace{-0.5cm}
where $F_i$ are symmetric $(N\times N)$ matrices and $X\succeq 0$ means that the matrix $X$ is positive semidefinite.
%
%where the functions $F_0, \ldots , F_m$  are convex, i.e., satisfy
%$F_i(\alpha X + \beta Y)\le \alpha F_i(X) + \beta F_i(Y)$
%for all $X,Y$ and all $\alpha,\beta \in \mathbb R$ with $\alpha,\beta\ge0$ and
%$\alpha+\beta=1$. In our case, $X,Y$ will be symmetric $(n\times n)$ matrices and we will  study a feasibility problem, i.e. the objective function
% $F_0(X)$ is  constant.

The goal of this paper is to formulate the condition of a contraction metric as a semidefinite optimization (feasibility) problem. In a subsequent paper we will discuss the details of how to solve this problem efficiently.

The main idea is to first triangulate the phase space. The Riemannian metric, i.e. the symmetric matrix $M(t,x)$, will be expressed as a continuous piecewise affine (CPA) function, i.e. if $M$ is given at the vertices  $(t_0,x_0),\ldots,(t_{n+1},x_{n+1})$ of a simplex, then $M(t,x)=\sum_{i=0}^{n+1}\lambda_i M(t_i,x_i)$, where
$(t,x)=\sum_{i=0}^{n+1}\lambda_i (t_i,x_i)$. %This is a similar approach to the construction of Lyapunov functions using linear programming.
 The conditions of Borg's criterion will become the constraints of a semidefinite optimization problem on the vertices of the triangulation, which will ensure the contraction property for {\bf all} points in the simplices.

In \cite{Aylwarda}, a contraction metric is also constructed using semidefinite optimization. There are, however, three main differences to our approach: firstly, adjacent trajectories in {\bf all} directions are contracted, whereas in our case the contraction takes place in the $n$-dimensional subspace $\mathbb R^n$ of $\mathbb R^{n+1}$, but not in the time-direction. Thus, in our case, the attractor is a periodic orbit, whereas in their case, it is an equilibrium point. Secondly, and more importantly, the above paper transforms the construction problem to a Linear Matrix Inequality and solves this using a sum-of-squares approach. This approach is used to prove global stability, i.e. the basin of attraction is the whole space. The contraction metric is a polynomial function, and the system considered is assumed to be polynomial, too. In this paper, we study systems which are not necessarily polynomial nor globally stable, and we triangulate the phase space to obtain a large subset of the basin of attraction. Lastly, we are able to prove that the semidefinite optimization problem is feasible if and only if the dynamical system has an exponentially stable periodic orbit, whereas in their paper the equivalence does not hold since the sum-of-squares condition is more restrictive than positive definiteness of matrices.

The paper is organised as follows: In Section \ref{sec0} we start with preliminaries, and in Section \ref{sec1} we generalise the existing theorem for a smooth Riemannian metric $M$ to a CPA (continuous piecewise affine) Riemannian contraction metric. We show that the existence of such a metric is sufficient to prove existence and uniqueness of a periodic orbit and to determine a subset of its basin of attraction. In Section \ref{sec2}, we describe the triangulation of the phase space into a simplicial complex and state the semidefinite optimization problem. Furthermore, we show that the feasibility of the semidefinite optimization problem provides us with a CPA contraction metric. We also discuss a possible objective function to obtain a bound on the largest Floquet exponent. In Section \ref{sec3}, finally, we show that the  semidefinite problem is feasible, if the dynamical system has an exponentially stable periodic orbit.

Altogether, this paper shows that the problem of finding a contraction metric is equivalent to the feasibility of a semidefinite optimization problem.

\section{Preliminaries}\label{sec0}

In this paper we consider a time-periodic ODE of the form
\begin{eqnarray}
\dot{x}&=&f(t,x),\label{ODE}
\end{eqnarray} where $f(t,x)=f(t+T,x)$ for all $(t,x)\in \mathbb R\times \mathbb R^n$ with a fixed period $T>0$. We denote $\tf(t,x)=\left(\begin{array}{c}1\\f(t,x)\end{array}\right)$ and $\tx=(t,x)$.
We study the equation on the cylinder $S_T^1\times \mathbb R^n$ as the phase space, where $S_T^1$ denotes the circle of circumference $T$. We assume that $f\in C^2(S^1_T\times \mathbb R^n,\mathbb R^n)$. If even $f\in C^3(S^1_T\times \mathbb R^n,\mathbb R^n)$ holds, then we obtain improved results, in particular higher order approximations; this is, however, not necessary to derive the main result. We denote the (unique) solution of the ODE with initial value $x(t_0)=x_0$ by
$x(t)=:S_t^x(t_0,x_0)$ and denote $(t+t_0\mbox{ mod }T,x(t))=:S_t(t_0,x_0)\in S_T^1\times \mathbb R^n$. Furthermore,  we assume that the solution exists for all $t\ge 0$.

We use the usual notations for the vector and matrix norms, in particular we denote by $\|M\|_{max}:=\max_{i,j=1,\ldots,n}|M_{ij}|$ the maximal entry of a matrix. $\mathbb S^n$ denotes the set of all symmetric real-valued $n\times n$ matrices. For a symmetric matrix $M\in \mathbb S^n$, $\lambda_{max}(M)$ denotes its maximal eigenvalue, we write $M\succeq 0$ if and only if $M$ is positive semidefinite and  $M\preceq 0$ if and only if $M$ is negative semidefinite.  The convex hull is defined by
$$\co(\tx_0,\tx_1,\ldots,\tx_{n+1}):=\left\{\sum_{i=0}^{n+1}\lambda_i\tx_i\colon \lambda_i\ge 0,\sum_{i=0}^{n+1}\lambda_i=1\right\}.$$

\section{A CPA contraction metric is sufficient for a periodic orbit}
\label{sec1}

It was shown in \cite{zaa} that a smooth contraction metric implies the existence and uniqueness of a periodic orbit and gives information about its basin of attraction. In this paper, we will seek to construct a CPA contraction metric, which is not of the smoothness required in the above paper.

In this section we show that we can relax the conditions on the smoothness of $M$ to cover the case of a CPA contraction metric. We will require that $M$ is continuous, Lipschitz continuous with respect to $x$, and  the forward orbital derivative exists. We will later relate this to the construction of the CPA contraction metric on a suitable triangulation, and we will prove that such a CPA metric will satisfy all assumption that we make in this section (Lemma \ref{le4.9}). The proof of the main theorem, Theorem \ref{th1}, will closely follow \cite{zaa}.
%We will prove a similar statement as in ZAA (see annotations in paper).

First we define a weaker notion of a Riemannian metric, which does not assume that $M$ is differentiable, but only that the orbital derivative in forward time exists.
%Note that this will hold for a CPA metric as defined later (????Proof, do we need any Lipschitz continuity???).

\begin{definition}[Riemannian metric]\label{Riem}
 $M$ is called a Riemannian metric for (\ref{ODE}), if $M\in C^0(S^1_T \times \mathbb R^n,\mathbb S^n)$ where $M(t,x)$ is positive definite.
 % and $M(t+T,x)=M(t,x)$ holds for all $(t,x)\in\mathbb R\times \mathbb R^n$.
 Moreover, we assume that $M$ is locally Lipschitz-continuous with respect to $x$, i.e. for all $(t_0,x_0)\in S_T^1\times \mathbb R^{n}$ there exists a neighborhood $U\subset S_T^1\times \mathbb R^{n}$ such that
 $$\|f(t,x)-f(t,y)\|\le L \|x-y\|$$ holds for all $(t,x),(t,y)\in U$.
Furthermore, we assume that the forward orbital derivative
$M'_+(t,x)$ is defined for all $(t,x)\in S^1_T\times \mathbb R^n$, where
 $M'_+(t,x)$ denotes the matrix
$$M'_+(t,x)=\lim_{\theta\to 0^+} \frac{M(S_\theta (t,x))-M(t,x)}{\theta}.$$
\end{definition}

We have the following statements for functions with a right-hand side derivative.

\begin{lemma}\label{product_rule}
Let $g_1,g_2 \in C^0(\mathbb R,\mathbb R)$ be RHS differentiable, i.e. $g'_+(x):=\lim_{h\to 0^+}\frac{g(x+h)-g(x)}{h}$ exists. Let $G\colon \mathbb R\to\mathbb R$ be differentiable.

Then $g_1(x)\cdot g_2(x)$ and $G(g(x))$ are RHS differentiable and
\begin{eqnarray}
(g_1\cdot g_2)'_+(x)&=&(g_1)'_+(x)\cdot g_2(x)+g_1(x)\cdot (g_2)'_+(x)\\
(G\circ g)'_+(x)&=&G'(g(x))\cdot g'_+(x).
\end{eqnarray}

Let $g \in C^0(I,\mathbb R)$, where $I\subset \mathbb R$ is open, be RHS differentiable.
% for all $x\in \mathbb R^+_0\setminus \{y_1,y_2,\ldots\}$ where $y_i$ is a sequence with no finite accumulation point.
Let $[ x_1, x_2]\subset I$, and let $\int_{x_1}^{x_2} g'_+(\xi)\,d\xi$ exist and be finite.

Then
 we have
$$\int_{x_1}^{x_2} g'_+(\xi)\,d\xi=g(x_2)-g(x_1).$$
\end{lemma}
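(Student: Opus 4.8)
The statement is really three separate elementary claims about right-hand-side (RHS) derivatives, and I would prove them in the order listed. For the \emph{product rule}, the plan is the direct algebraic manipulation: write the difference quotient
\[
\frac{g_1(x+h)g_2(x+h)-g_1(x)g_2(x)}{h}
=\frac{g_1(x+h)-g_1(x)}{h}\,g_2(x+h)+g_1(x)\,\frac{g_2(x+h)-g_2(x)}{h},
\]
and let $h\to 0^+$. The first factor tends to $(g_1)'_+(x)$, and $g_2(x+h)\to g_2(x)$ by continuity of $g_2$, so the first term converges to $(g_1)'_+(x)\,g_2(x)$; the second term converges to $g_1(x)\,(g_2)'_+(x)$ directly. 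This requires nothing beyond the standard limit laws. For the \emph{chain rule} $G\circ g$, I would use differentiability of $G$ at the point $g(x)$: write $G(g(x+h))-G(g(x)) = G'(g(x))\,[g(x+h)-g(x)] + r\big(g(x+h)-g(x)\big)$ where $r(s)/s\to 0$ as $s\to 0$, divide by $h$, and take $h\to 0^+$, using that $g(x+h)-g(x)\to 0$ by continuity. One must be slightly careful that $g(x+h)-g(x)$ could be zero for some small $h$, but in that case the remainder term simply vanishes, so the argument goes through; this is the only mildly delicate point in the first two claims.

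For the \emph{fundamental theorem of calculus} part, the hypothesis is that $g\in C^0$ on an open interval $I$, is RHS differentiable everywhere, and $\int_{x_1}^{x_2} g'_+(\xi)\,d\xi$ exists and is finite. I would proceed by a monotonicity/mean-value-type argument rather than by Lebesgue theory. Fix $\varepsilon>0$ and consider the auxiliary function $h_\varepsilon(x) := g(x) - \int_{x_1}^{x} g'_+(\xi)\,d\xi + \varepsilon(x-x_1)$ on $[x_1,x_2]$. Using that $g$ is continuous and that at each point the RHS derivative of $\int_{x_1}^{x}g'_+\,d\xi$ equals $g'_+(x)$ at least in a suitable sense, one shows $h_\varepsilon$ is (RHS) non-decreasing, hence $h_\varepsilon(x_2)\ge h_\varepsilon(x_1)=g(x_1)$; a symmetric argument with $-\varepsilon$ gives the reverse inequality, and letting $\varepsilon\to 0$ yields $g(x_2)-g(x_1)=\int_{x_1}^{x_2}g'_+(\xi)\,d\xi$. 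The standard tool here is the lemma that a continuous function on $[a,b]$ with everywhere-nonnegative RHS (Dini) derivative is non-decreasing, which itself is proved by a connectedness/supremum argument on the set $\{x : h_\varepsilon(x)\ge h_\varepsilon(x_1)-\delta\}$.

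The main obstacle is this last part: the integral $\int_{x_1}^{x_2} g'_+(\xi)\,d\xi$ is only \emph{assumed} to exist, and $g'_+$ need not be continuous, so one cannot simply invoke the classical fundamental theorem of calculus and must instead run the $\varepsilon$-perturbation / Dini-derivative monotonicity argument carefully, checking that it only uses continuity of $g$ and the existence (not regularity) of $g'_+$, plus integrability of $g'_+$ to make $x\mapsto\int_{x_1}^x g'_+$ continuous. The product and chain rules, by contrast, are routine. Once these three facts are in hand they will be applied later (in the spirit of \cite{zaa}) to functions such as $\theta\mapsto v(\theta)^T M(S_\theta(t,x)) v(\theta)$ along solutions, where only the forward orbital derivative is known to exist.
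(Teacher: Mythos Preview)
Your treatment of the product rule and chain rule is correct and is exactly what the paper does: it simply remarks that ``the first two statements follow directly from the usual proofs, replacing the limit with the RHS limit.'' Nothing more is needed there.

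For the fundamental-theorem-of-calculus part, the paper does \emph{not} give an argument at all; it merely records that ``the last statement is a result due to Lebesgue, formulated originally for Dini derivatives,'' and invokes it as a black box. Your proposal goes further and sketches a direct proof via the $\varepsilon$-perturbation / Dini-monotonicity trick. That is a reasonable route in spirit, but as written it has a genuine gap at the point you yourself flag: the claim that the RHS derivative of $x\mapsto \int_{x_1}^{x} g'_+(\xi)\,d\xi$ equals $g'_+(x)$ ``at each point'' is not true in general---Lebesgue differentiation only gives this almost everywhere, and the elementary monotonicity lemma (continuous $h$ with $D^+h\ge 0$ everywhere, or everywhere off a countable set, implies $h$ non-decreasing) does not tolerate a merely null exceptional set. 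So the step ``one shows $h_\varepsilon$ is (RHS) non-decreasing'' does not follow from what you have written.

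The standard repair is to avoid differentiating the indefinite integral pointwise and instead compare $g$ with integrals of semicontinuous approximants of $g'_+$: by the Vitali--Carath\'eodory theorem, for every $\varepsilon>0$ there is a lower semicontinuous $\psi\ge g'_+$ with $\int_{x_1}^{x_2}(\psi-g'_+)<\varepsilon$; for $\Psi(x)=\int_{x_1}^{x}\psi$ one has $D_+\Psi(x)\ge \psi(x)\ge g'_+(x)$ \emph{everywhere}, hence $D^+(g-\Psi)\le 0$ everywhere, so $g-\Psi$ is non-increasing and $g(x_2)-g(x_1)\le \int_{x_1}^{x_2}\psi\le \int_{x_1}^{x_2} g'_+ +\varepsilon$. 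The reverse inequality uses an upper semicontinuous minorant. This is essentially a proof of the Lebesgue result the paper cites; if you want to include a proof rather than a citation, this is the missing ingredient.
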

\begin{proof}
The first two statements follow directly from the usual proofs, replacing the limit with the RHS limit.
The last statement is a result due to Lebesgue, formulated originally for Dini derivatives.
%Note that the last statement follows from the smooth version: let $t_{k_1}$ be the smallest number $t_i$ such that $t_{k_1}\ge x_1$ and let $t_{k_2}$ be the largest number such that $t_{k_2}\le x_2$. Then
%\begin{eqnarray*}
%\int_{x_1}^{x_2} g'(\xi)\,d\xi&=&\int_{x_1}^{t_{k_1}} g'(\xi)\,d\xi+
%\sum_{i=k_1}^{k_2-1}\int_{t_{i}}^{t_{i+1}} g'(\xi)\,d\xi
%+\int_{t_{k_2}}^{x_2} g'(\xi)\,d\xi\\
%&=&g(t_{k_1})-g(x_1)+\sum_{i=k_1}^{k_2-1}(g(t_{i+1} )-g(t_{i}))
%+g(x_2)- g(t_{k_2})\\
%&=&g(x_2)-g(x_1).
%\end{eqnarray*}
\end{proof}

\begin{lemma}\label{ableitung}

Let $V(t,x)$ be a function which is locally Lipschitz with respect to $x$  and let $\lim_{\theta \to 0^+}\frac{V( (t_0,x_0)+\theta \tf(t_0,x_0))-V(t_0,x_0)}{\theta}$ exist.

Then the orbital derivative $V'_+(t_0,x_0)$ exists and is equal to
\begin{eqnarray*}
V'_+(t_0,x_0)&:=&\lim_{\theta \to 0^+}\frac{V(S_\theta (t_0,x_0))-V(t_0,x_0)}{\theta}\\
&=&\lim_{\theta \to 0^+}\frac{V( (t_0,x_0)+\theta\tf(t_0,x_0))-V(t_0,x_0)}{\theta}.
\end{eqnarray*}
\end{lemma}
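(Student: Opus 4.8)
The plan is to reduce the statement to a comparison of two curves through $(t_0,x_0)$ that are tangent to first order, and then to exploit the local Lipschitz continuity of $V$ in $x$ to control the difference of the two difference quotients. Write $\gamma(\theta):=S_\theta(t_0,x_0)$ for the solution curve and $\ell(\theta):=(t_0,x_0)+\theta\tf(t_0,x_0)$ for the ``tangent line'' curve. Both start at $(t_0,x_0)$ at $\theta=0$, and since $\dot\gamma(0)=\tf(t_0,x_0)$, a Taylor expansion (using $f\in C^2$, so in particular $f$ is $C^1$ and $\gamma$ is $C^2$) gives $\gamma(\theta)=\ell(\theta)+O(\theta^2)$ as $\theta\to 0^+$. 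In fact it suffices that $\|\gamma(\theta)-\ell(\theta)\|\le C\theta^2$ for $\theta$ small, which follows from the mean value / integral form of Taylor's theorem applied to $t\mapsto S^x_t(t_0,x_0)$.

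The key step is then the estimate
\begin{equation*}
\left|\frac{V(\gamma(\theta))-V(t_0,x_0)}{\theta}-\frac{V(\ell(\theta))-V(t_0,x_0)}{\theta}\right|
=\frac{|V(\gamma(\theta))-V(\ell(\theta))|}{\theta}.
\end{equation*}
Here I would use that $\gamma(\theta)$ and $\ell(\theta)$ have the \emph{same} time-component $t_0+\theta$ for all $\theta$ (the first component of $\tf$ is $1$, and the flow advances time at unit speed), so that the two points differ only in the $x$-coordinate. Hence the local Lipschitz property of $V$ with respect to $x$ applies directly: for $\theta$ small enough that both $\gamma(\theta)$ and $\ell(\theta)$ lie in a Lipschitz neighborhood $U$ of $(t_0,x_0)$, we get $|V(\gamma(\theta))-V(\ell(\theta))|\le L\,\|\gamma_x(\theta)-\ell_x(\theta)\|\le LC\theta^2$, where $\gamma_x,\ell_x$ denote the $x$-components. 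Dividing by $\theta$ shows this difference is $O(\theta)\to 0$.

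Combining the two parts: the limit $\lim_{\theta\to 0^+}\frac{V(\ell(\theta))-V(t_0,x_0)}{\theta}$ exists by hypothesis, and we have just shown the difference between the two difference quotients tends to $0$, so the limit defining $V'_+(t_0,x_0)$ along the true flow also exists and equals the same value. This is exactly the claimed identity. I would remark that the definition $M'_+$ in Definition \ref{Riem} is componentwise, so applying this lemma entrywise to $V=M_{ij}$ is what makes it directly usable later.

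The main obstacle is making the Taylor estimate $\|\gamma_x(\theta)-\ell_x(\theta)\|=O(\theta^2)$ rigorous with the stated smoothness: since $f\in C^2(S^1_T\times\mathbb R^n,\mathbb R^n)$, the solution $t\mapsto S^x_t(t_0,x_0)$ is $C^2$ in $t$ on a neighborhood of $t_0$, with second derivative $\frac{d}{dt}f(t,x(t))=\partial_t f+D_xf\cdot f$ continuous, hence bounded near $t_0$; then $\gamma_x(\theta)-x_0-\theta f(t_0,x_0)=\int_0^\theta\int_0^s \frac{d}{d\tau}f(\tau+t_0,x(\tau+t_0))\,d\tau\,ds$ has norm $\le \tfrac12 C\theta^2$. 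A secondary technical point is to choose a single $\theta_0>0$ small enough that both curves remain in the Lipschitz neighborhood $U$ for $\theta\in(0,\theta_0)$, which is possible by continuity of both curves at $\theta=0$. Everything else is routine.
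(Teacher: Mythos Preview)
Your proof is correct and follows essentially the same route as the paper: decompose the difference quotient, observe that $\gamma(\theta)$ and $\ell(\theta)$ share the same $t$-component so the local Lipschitz bound in $x$ applies, and then show the $x$-components are first-order tangent. The only minor difference is that the paper obtains $\|S^x_\theta(t_0,x_0)-x_0-\theta f(t_0,x_0)\|=o(\theta)$ via the Mean Value Theorem and continuity of $f$ alone, whereas you invoke the stronger $C^2$ hypothesis to get an $O(\theta^2)$ Taylor remainder; both are valid under the paper's standing assumptions.
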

\begin{proof}
Recall that $\tf(t,x)=\left(\begin{array}{c}1\\f(t,x)\end{array}\right)$ and $\tx=(t,x)$.
We use that
\begin{eqnarray*}
\frac{V(S_\theta (t_0,x_0))-V(t_0,x_0)}{\theta}&=&
\frac{V(S_\theta (t_0,x_0))-V( (t_0,x_0)+\theta (1,f(t_0,x_0)))}{\theta}\\
&&+\frac{V( (t_0,x_0)+\theta(1,f(t_0,x_0)))-V(t_0,x_0)}{\theta}\\
\end{eqnarray*}
Note that due to the Lipschitz continuity $|V(t,y)-V(t,x)|\le  L\|y-x\|$ we have for small $\theta$
\begin{eqnarray*}
\left|\frac{V(S_\theta (t_0,x_0))-V( (t_0,x_0)+\theta(1,f(t_0,x_0)))}{\theta}\right|
&\le&L
\frac{\|S_\theta^x (t_0,x_0)- x_0-\theta f(t_0,x_0)\|}{\theta}.
\end{eqnarray*}
By the Mean Value Theorem there are $h_1(\theta),\ldots,h_n(\theta)\in [0,1]$ such that
$$(S_\theta^x(t_0,x_0)-x_0)_i=\theta
\frac{\partial}{\partial t}(S_{h_i (\theta)\theta  }^x(t_0,x_0))_i=\theta
f_i(S_{h_i(\theta) \theta}(t_0,x_0)).$$
Hence,
\begin{eqnarray*}
\lim_{\theta\to 0^+}
\frac{|(S_\theta^x (t_0,x_0)- x_0-\theta f(t_0,x_0))_i|}{\theta}
&\le&\lim_{\theta \to 0^+}\left|f_i(S_{h_i (\theta)\theta}(t_0,x_0))-f_i(t_0,x_0)\right|\\
&=&0
\end{eqnarray*}
since the solution and $f_i$ are both continuous. Altogether, we thus have
\begin{eqnarray*}
%\lefteqn{
\lim_{\theta \to 0^+}\left|\frac{V(S_\theta (t_0,x_0))-V( (t_0,x_0)-\theta(1,f(t_0,x_0)))}{\theta}\right|%}
%\\
%&\le&\lim_{\theta \to 0^+}LC
%\%frac{\|S_\theta^x (t_0,x_0)- x_0-\theta f(t_0,x_0)\|_\infty}{\theta}\\
&=&0.
\end{eqnarray*}
%%estimating the norm $\| \cdot\|$ by $C\|\cdot\|_\infty$.
\end{proof}

\begin{theorem}\label{th1}
Consider the equation $\dot{x}=f(t,x)$,
 where $x\in \mathbb R^n$, and assume that $f\in C^0(S^1_T \times
\mathbb R^n,\mathbb R^n)$
%is a $t$-periodic function with period $T$,
and all partial derivatives of $f$ order one with respect to $x$ are
continuous functions of $(t,x)$.
Let $\varnothing\not= G\subset  S^1_T\times\mathbb R^n$
 be a connected, compact and positively invariant
set. Let $M$ be a Riemannian metric in the sense of Definition \ref{Riem}.

 Moreover, assume ${L}_M(t,x)\le -\nu <0$ for all $(t,x)\in G$, where
%\begin{eqnarray}
%{ L}_M(t,x)&:=&\sup_{w\in \mathbb R^n,w^TM(t,x)w=1} { L}_M(t,x;w)
%\label{LM}\\
%{ L}_M(t,x;w)&:=&w^T\left[M(t,x)D_xf(t,x)
%+\frac{1}{2}M'_+(t,x)\right]w
%\mbox{.}\label{LM2}\end{eqnarray}
\begin{eqnarray}
{ L}_M(t,x)&:=&\sup_{w\in \mathbb R^n,w^TM(t,x)w=1}w^T\left[M(t,x)D_xf(t,x)
+\frac{1}{2}M'_+(t,x)\right]w\label{LM}
\end{eqnarray}

Then there exists one and only one periodic orbit  $\Omega\subset G$ which
is exponentially asymptotically stable. Moreover, for its basin
of attraction $G\subset A(\Omega)$ holds.

If $$\int_0^T p(t)^T M'_+(t,x(t))p(t)\, dt$$ exists and is finite for all solutions
$x(t)$ with $x(0)\in G$ and all functions $p\in C^0([0,T], \mathbb R^n)$, then
  the largest real part $-\nu_0$
of all Floquet exponents of
$\Omega$ satisfies \begin{eqnarray*}-\nu_0\le -\nu.
%:=\max_{(t,x)\in G} { L}_M(t,x).
\end{eqnarray*}
\end{theorem}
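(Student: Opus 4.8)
The plan is to run the classical Borg–type argument, adapted to the weaker notion of metric in Definition \ref{Riem}. The core is an exponential decay estimate for the Riemannian length of a curve of solutions; existence and uniqueness of the orbit then follow from a contraction argument for the time-$T$ (Poincaré) map, and the Floquet bound follows by testing the same decay estimate on a Floquet solution along $\Omega$.

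\textbf{Step 1 (contraction of Riemannian length).} Let $\xi\colon[0,1]\to\mathbb R^n$ be a $C^1$ curve with $(0,\xi(s))\in G$ for all $s$, put $x(t,s):=S^x_t(0,\xi(s))$ and $v(t,s):=\partial_s x(t,s)$. Positive invariance of $G$ gives $S_t(0,\xi(s))\in G$ for all $t\ge0$, and continuity of $D_xf$ gives that $v$ solves the variational equation $\dot v=D_xf(t,x(t,s))\,v$. Fix $s$ and set $\phi(t):=v(t,s)^TM(S_t(0,\xi(s)))v(t,s)$. By Definition \ref{Riem}, $t\mapsto M(S_t(0,\xi(s)))$ is RHS differentiable with derivative $M'_+(S_t(0,\xi(s)))$, so applying the product rule of Lemma \ref{product_rule} to the (three-fold, entrywise) products, $\phi$ is RHS differentiable with
\[
\phi'_+(t)=2\,v^T\!\Big[M D_xf+\tfrac12 M'_+\Big]v\ \le\ -2\nu\, v^TMv\ =\ -2\nu\,\phi(t),
\]
the inequality being (\ref{LM}) applied to $w=v/\sqrt{v^TMv}$ (and $\phi\equiv0$ when $v$ vanishes, by uniqueness for the linear variational equation). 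Hence $t\mapsto\phi(t)e^{2\nu t}$ is continuous with RHS derivative $\le0$, thus non-increasing, so $\phi(t)\le\phi(0)e^{-2\nu t}$. Taking square roots and integrating in $s$ yields, for the Riemannian length $\ell(t):=\int_0^1\sqrt{v(t,s)^TM(S_t(0,\xi(s)))v(t,s)}\,ds$,
\[
\ell(t)\ \le\ \ell(0)\,e^{-\nu t}\qquad(t\ge0).
\]

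\textbf{Step 2 (existence and uniqueness).} Let $G_0:=\{x\in\mathbb R^n:(0,x)\in G\}$, which is non-empty (flow any point of $G$ forward to time $\equiv0$ mod $T$) and compact, and on which the Poincaré map $P(x):=S^x_T(0,x)$ acts. Since $M$ is continuous and positive definite on the compact set $G$ there are $0<\underline m\le\overline m$ with $\underline m\|v\|^2\le v^TMv\le\overline m\|v\|^2$ on $G$, so Euclidean distance on $G_0$ is controlled by the Riemannian length of connecting curves in $G_0$. Joining two points of $G_0$ by such a curve (here connectedness of $G$ is used) and applying Step 1, one gets $\|P^n(x)-P^n(y)\|\le C\,e^{-\nu nT}$ for all $x,y\in G_0$, with $C$ depending only on $G$. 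Hence for any $x_0\in G_0$ the sequence $(P^n(x_0))$ is Cauchy, converges to some $p^*\in G_0$, and by continuity $P(p^*)=p^*$; the solution through $(0,p^*)$ is a periodic orbit $\Omega\subset G$. If $\Omega_1,\Omega_2\subset G$ were two periodic orbits with section points $p_1,p_2$, iterating $P$ along a common multiple $m$ of their $P$-periods keeps $p_i$ fixed while $\|p_1-p_2\|=\|P^{nm}(p_1)-P^{nm}(p_2)\|\le C\,e^{-\nu nmT}\to0$; so $p_1=p_2$ and $\Omega_1=\Omega_2$.

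\textbf{Step 3 (stability, basin, Floquet bound; main obstacle).} For $x_0\in G_0$, joining $x_0$ to $p^*$ in $G_0$ and using Step 1 gives $\|S^x_{nT}(0,x_0)-p^*\|\le C e^{-\nu nT}$; a Gronwall estimate on each $[nT,(n+1)T]$, valid since $D_xf$ is bounded on $G$, upgrades this to $\operatorname{dist}(S_t(0,x_0),\Omega)\le\widetilde C e^{-\nu' t}$ for all $t\ge0$, which is exponential asymptotic stability of $\Omega$ together with $G\subset A(\Omega)$. For the last statement, let $\omega(t)$ parametrize $\Omega$ and pick an eigenvector of maximal modulus of the monodromy matrix of $\dot v=D_xf(t,\omega(t))v$, giving a Floquet solution $v(t)=e^{-\nu_0 t}q(t)$ with $q$ periodic and not identically zero, $-\nu_0$ the largest real part of the Floquet exponents. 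Set $\phi(t):=v(t)^TM(t,\omega(t))v(t)$; as in Step 1, $\phi'_+(t)=2v^T[MD_xf+\tfrac12 M'_+]v\le-2\nu\phi(t)$, and now the hypothesis that $\int_0^T p^TM'_+(t,\omega(t))p\,dt$ is finite makes $\phi'_+$ integrable, so the Lebesgue identity in Lemma \ref{product_rule} shows $\phi(t)e^{2\nu t}$ is non-increasing, i.e. $\phi(t)\le\phi(0)e^{-2\nu t}$. On the other hand $\phi(t)\ge\underline m\,e^{-2\nu_0 t}\,(\max_{[0,T]}\|q\|)^2$ along the sequence of times where $\|q\|$ attains its maximum, and comparing the two bounds as $t\to\infty$ forces $\nu\le\nu_0$, i.e. $-\nu_0\le-\nu$. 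The conceptual steps are routine; the real work is the bookkeeping with one-sided (forward orbital) derivatives — the product rule and the passage ``RHS derivative $\le0\Rightarrow$ non-increasing'' in Step 1, and matching the integrability hypothesis to the exact invocation of the Lebesgue identity in Step 3 — together with the mild topological point in Step 2 of joining points of $G$ by curves that stay in $G$, and the continuous-time upgrade of the discrete-time decay.
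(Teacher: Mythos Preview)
Your overall strategy is the right one and mirrors the paper, which simply defers to the Borg--type argument in \cite{zaa} and only patches the places where the one-sided derivative $M'_+$ replaces $M'$ (the forward-derivative product rule and the comparison lemma for Step~1, and the fundamental-theorem identity of Lemma~\ref{product_rule} for the Floquet estimate). Your Step~1 is exactly this patch of \cite[Proposition~3.1]{zaa}.

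The genuine gap is in Step~2. You write ``Joining two points of $G_0$ by such a curve (here connectedness of $G$ is used)'', and later call this a ``mild topological point''. It is not: connectedness of $G\subset S^1_T\times\mathbb R^n$ does \emph{not} imply that the time-slice $G_0=\{x:(0,x)\in G\}$ is (path-)connected. A clean counterexample is the half-speed rotation $\dot x=\pi R x$ in $\mathbb R^2$ with $T=1$: take $G$ to be the union of two small disjoint disks at $(\pm 1,0)$, transported by the flow; the disks swap after one period, so in $S^1_1\times\mathbb R^2$ the set $G$ is a single connected solid torus, positively invariant, yet $G_0$ consists of two disjoint disks. Your Step~1 only contracts curves with $(0,\xi(s))\in G$, so you cannot use a path in $G$ that wanders through other times to connect points of $G_0$; and the obvious ``flow each point of the path to time $0$'' map is discontinuous where the path crosses $t\equiv 0$. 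Consequently neither the Cauchy estimate $\|P^n x-P^n y\|\le Ce^{-\nu nT}$ nor the uniqueness argument goes through as written. The argument in \cite{zaa} does not attempt to join arbitrary points of $G_0$; it uses the \emph{local} contraction of Proposition~3.1 (nearby initial data at the same time $t_0$) together with compactness and connectedness of $G$ to chain the local estimates, which is what you would need to supply here.

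A smaller remark on Step~3: your route to the Floquet bound is slightly different from the paper's. The paper (following \cite{zaa}) integrates $(\ln(p^TMp))'_+$ over one period and invokes the Lebesgue identity of Lemma~\ref{product_rule}, for which the integrability hypothesis on $\int_0^T p^TM'_+\,p\,dt$ is used directly. You instead run the Step~1 comparison on $\phi=v^TMv$ and compare asymptotics; this actually does \emph{not} need the integrability hypothesis (the inference ``continuous with $g'_+\le 0\Rightarrow$ nonincreasing'' holds without it), so your appeal to it is superfluous rather than wrong. Also, your choice of Floquet solution $v=e^{-\nu_0 t}q$ with $q$ real $T$-periodic presumes the extremal multiplier is real and simple; if it is complex you must work with $\mathrm{Re}\,v$ and $\mathrm{Im}\,v$ together (the paper's log-integration approach has the same caveat).
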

Note that in contrast to \cite{zaa}, here $L_M(t,x)$ is not necessarily continuous, since $M'_+(t,x)$ is not continuous in general.
\begin{proof}
The only parts in the proof that need to be changed slightly from \cite{zaa} are the proof of Proposition 3.1 and the estimate on the Floquet exponents; the other steps of the proof are exactly the same.

%\newpage%\vspace{0.3cm}
\noindent {\bf Proposition 3.1}

We replace the temporal derivative in Step III of \cite[Proposition 3.1]{zaa} by the {\it  forward} temporal derivative of
 \begin{eqnarray*}
A(\theta)^2&:=&
\left[S^x_{\theta}(t_0,x_0+\eta)
-S^x_\theta(t_0, x_0)\right]^T
M(S_\theta(t_0, x_0))\nonumber\\
&&\hspace{4.5cm}\cdot
\left[S^x_{\theta}(t_0,x_0+\eta)-S^x_\theta (t_0,x_0)\right].
%\label{ab}
\end{eqnarray*}
Note that the product rule holds for the RHS derivative as usual, cf. Lemma \ref{product_rule}. Furthermore, we use the comparison lemma in the version for RHS limits; a more general version for Dini derivatives can be found in \cite[Lemma 3.4]{khalil}. This shows that the result of \cite[Proposition 3.1]{zaa} remains true for the Riemannian metric $M$ as in Definition \ref{Riem}.

\vspace{0.3cm}
\noindent {\bf Floquet exponent}

In this part of the proof in \cite{zaa} we need to show that
\begin{eqnarray*}
\lefteqn{\int_0^T \left(\ln \left(p(t)^T M(t,x(t))p(t)\right)\right)'_+\,dt }\\&=&
\ln \left(p(T)^T M(T,x(T))p(T)\right)-\ln \left(p(0)^T M(0,x(0))p(0)\right)
\end{eqnarray*}
where $x(t)$ is the periodic orbit and $p(t)e^{-\nu_0 t}$ with $p(0)=p(T)\not=0$ is a solution of the first variation equation $\dot{y}=D_xf(t,x(t))y$ along the periodic orbit.
Note that we have already used that the RHS derivative satisfies the product rule, cf. Lemma \ref{product_rule}.
 Using the same lemma, the composition with the differentiable function $\ln$ is also RHS differentiable. To apply the last statement of this lemma, we need to show that
 $$\int_0^T \left(\ln \left(p(t)^T M(t,x(t))p(t)\right)\right)'_+\,dt$$
 exists and is finite.
 Indeed, we have, using the chain rule in Lemma \ref{product_rule}, that
$$  \left(\ln \left(p(t)^T M(t,x(t))p(t)\right)\right)'_+=
 \frac{ \left(p(t)^T M(t,x(t))p(t)\right)'_+}{p(t)^T M(t,x(t))p(t)}.$$
Since $M(t,x(t))$ is positive definite and continuous and $p(t)\not=0$, the denominator is bounded away from zero. We apply the product rule to the nominator and use
that $$\int_0^T p(t)^T M'_+(t,x(t))p(t)\, dt$$ exists and is finite by assumption, and that the other terms are smooth. This shows the theorem.
\end{proof}

\section{A solution of the semidefinite optimization problem defines a contraction metric}
\label{sec2}

In this section we will state a semidefinite optimization problem and show that a feasible solution of the semidefinite optimization problem defines a contraction metric, satisfying the assumptions of Theorem \ref{th1}. The solution will be a CPA matrix-valued function $M$, which is defined by the values at the vertices.

We will first define a triangulation of the phase space and then define a Riemannian metric by its values at the vertices of this triangulation and affine on each simplex. This  CPA Riemannian metric is shown to fulfill all conditions of Theorem \ref{th1}, also inside the simplices.

\subsection{Triangulation}\label{tria}
%
%As for Lyapunov function, but in $\mathbb R^{n+1}$ (treat $t$ and $x$-variables equally). Call vertices $\tx_k$.

For the algorithm to construct a piecewise affine Lyapunov function we need to fix our triangulation.  This triangulation is
a subdivision of $S^1_T \times \R^n$ into $(n+1)$-simplices, such that the intersection of any two different simplices in the subdivision is either empty
or a $k$-simplex, $0\leq k < n+1$, and then its vertices are the common vertices of the two different  simplices.
Such a structure is often referred to as a simplicial $(n+1)$-complex.

In contrast to \cite{HAFSTEIN-revised-cpa}, we do not need to have a fine triangulation near 0, but we need to ensure that the triangulation respects the periodicity in $t$ of the phase space, i.e. it is a triangulation of the cylinder $S_T^1\times \mathbb R^n$.
%
%We do this by extending the local simplicial $n$-complex from \cite{PeterSiggiDCDS2012} in a similar way to as we extended the local planar triangulation from \cite{GIESL-HAFSTEIN:2010a} in \cite{GIESL-HAFSTEIN:2012a}.
%The main idea is to take the intersection of the boundary of a hypercube $[-b,b]^n$, $b>0$, with the simplices in a simplicial $n$-complex as in \cite{HafsteinMon}, such that
%the boundary is subdivided into a simplicial $(n-1)$-complex.  The simplicial $n$-complex must be chosen such that the boundary of the hypercube $[-b,b]^n$ and a simplex in complex can only intersect in a face of the simplex.
%Then the intersection is naturally a simplicial $(n-1)$-complex in itself and we then add the origin as a vertex to all the simplices in this complex to get
%a new simplicial $n$-complex locally at the origin.  We refer to this local triangulation as the {\it simplicial fan} because of its similarity to the 3D graphics primitive triangle fan.  Outside of the hypercube $[-b,b]^n$ we continue to use the simplicial $n$-complex from \cite{HafsteinMon}.

For the construction we use  the
standard orthonormal basis $\be_0,\be_1,\be_2,\ldots,\be_n$ of $\mathbb R\times \R^n$, where $\be_0$ denotes the unit vector in $t$-direction.
We also fix a scaling matrix $S=\diag(1,s_1,\ldots,s_n)$ with diagonal entries $s_i>0$ which fixes the ratio of the fineness of the triangulation with respect to different directions. We have fixed the $1$ in $t$-direction to ensure that the simplicial complex is compatible with the periodicity.

Further, we use  the characteristic functions $\chi_\cJ(i)$
equal to one if $i\in\cJ\subset \mathbb N$ and equal to zero if $i\notin\cJ$ and the functions $\bR^\cJ:\R^{n+1} \to \R^{n+1}$, defined for
every $\cJ\subset\{1,2,\ldots,n\}$ by
$$
\bR^\cJ(\btx) :=x_0\be_0+ \sum_{i=1}^n(-1)^{\chi_\cJ(i)}x_i\be_i,
$$
where $\tx=(x_0,x_1,\ldots,x_n)$ and $x_0=t$.
Thus $\bR^{\cJ}(\btx)$ puts a minus in front of the coordinate $x_i$ of $\btx$ if $i\in\cJ$.
%Note that the two parameters $b$ and $K$ of the triangulations $\cT^\cC_{K,b}$ and $\cT_{K,b}$ refer to the size of the hypercube $[-b,b]^n$ covered by its simplicial fan at the origin and to the fineness, respectively.

 \begin{definition}
 \label{neig}
 Denote by $\cN$ the set of all subsets $\cD \subset S^1_T\times \R^n$  that fulfill\,{\rm :}
 \begin{enumerate}
   \item[i)] $\cD$ is compact.
   \item[ii)] The interior $\cD^\circ$ of $\cD$ is  connected and open.
   \item[iii)] $\cD=\overline{\cD^\circ}$.
 \end{enumerate}
Note that compactness, connectedness etc. refer to the space $S^1_T\times \R^n$.
 \end{definition}

\begin{figure}[here]
	\centerline{
\includegraphics[height=3.8cm]{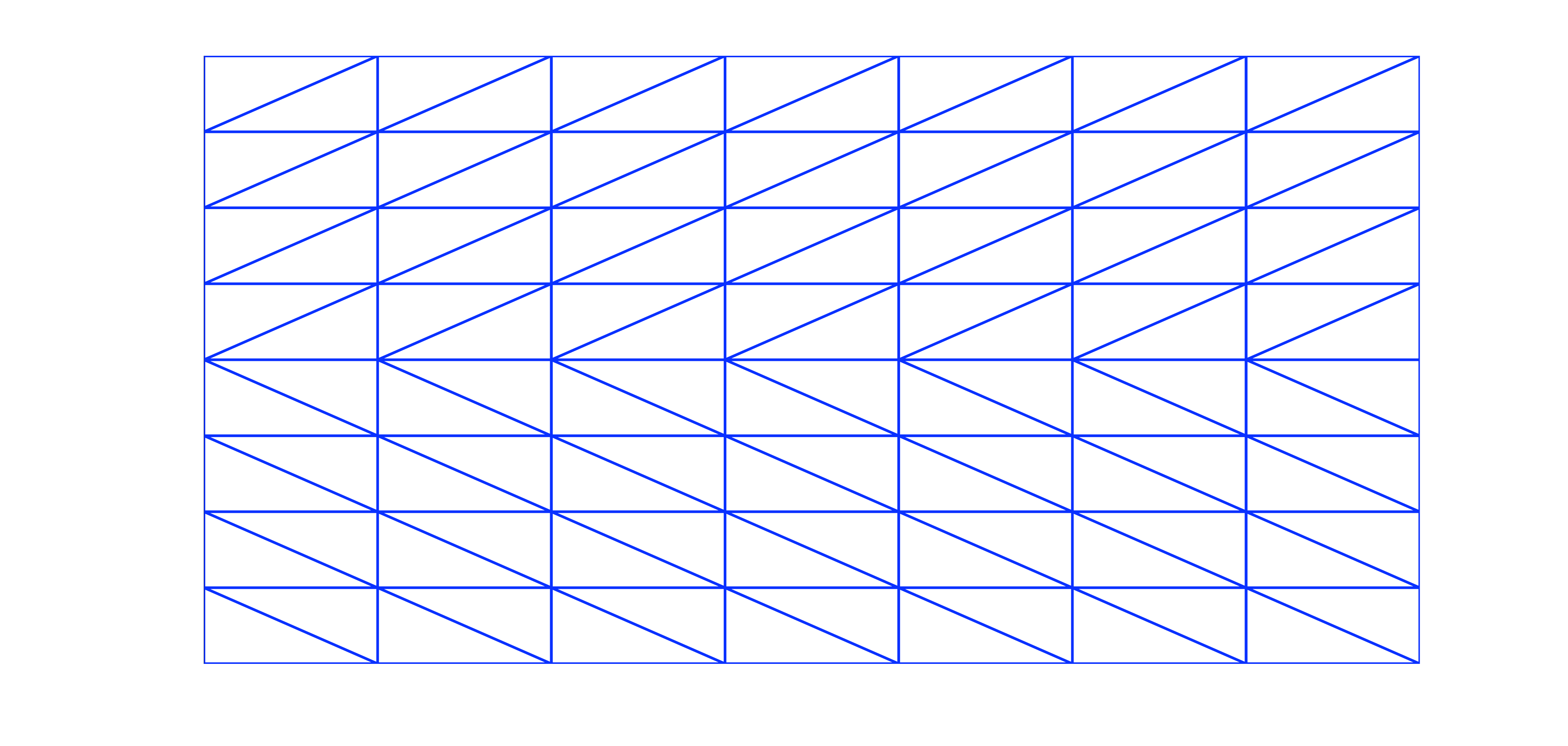}}
\caption{The triangulation $\cT^\text{basic}$ of $S^1_T\times \R^1$. Note that the operator $\bR^\cJ$ puts a minus-sign in $x$-directions, but not in the $t$-direction, which results in the simplices shown in this  figure.}
\label{fig1}\end{figure}

\begin{definition}\label{triconstr}
Let $\cC\in\cN$ be a given subset of $S^1_T\times \R^n$.  We will define a triangulation $\cT^\cC_{K}$ of a $\cD_K\in\cN$, $\cD_K\supset\cC$, that approximates $\cC$.
To construct the triangulation of a set $\cT^\cC_{K}$, we first define the triangulations $\cT^\text{basic}$ and $\cT_K^\text{basic}$ as intermediate steps.
\begin{enumerate}
  \item The triangulation $\cT^\text{basic}$, cf. Figure \ref{fig1}, consists of the simplices
  $$\T_{\btz,\cJ,\sigma}:=
 \co\left(\bR^\cJ\left(\btz+\sum_{i=0}^j\be_{\sigma(i)}\right)\,:\,j=-1,0,1,2,\ldots,n\right)
  $$
  for all $\btz\in\N^{n+1}_{0}$, all $\cJ\subset \{1,2,\ldots,n\}$, and all $\sigma\in S_{n+1}$, where  $S_{n+1}$ denotes the set of  all permutations of the
numbers $0,1,2,\ldots,n$.
%     \item
%  Choose a $K\in\N_0$ and consider the intersections of the $n$-simplices $T_{\bz,\cJ,\sigma}$ in
%  $\cT^\text{basic}$ and the boundary  $[-2^K,2^K]^n$.   We are only interested in those intersections that are $(n-1)$-simplices, i.e.~simplices of the form
%\begin{equation*}
%\co\left\{\bx_j:=\bR^\cJ\left(\bz+\sum_{i=1}^j\be_{\sigma(i)}\right)\,:\,j=\{0,1,2,\ldots,n\}\ \ \text{and}\ \ \|\bx_j\|_\infty=2^K\right\}
%\end{equation*}
%with exactly $n$-vertices.
%%  ????WENN DU DEN SCHNITT NIMMST, DANN HABEN VIELE EINE LEERE SCHNITTMENGE, ABER MANCHE SCHNITTMENGEN SIND AUCH $(n-2)$ oder $n-3$ etc. simplices oder????
%  A new triangulation of $[-2^K,2^K]^n$ is now obtained by taking every such $(n-1)$-simplex and adding the origin as a vertex to it.  This triangulation is denoted
%  by $\cT_K^\text{basic}$
%  % We refer to this local part of the triangulation at the origin as the simplicial fan.
  \item
  Now
   scale down the triangulation  $\cT^\text{basic}$ with the mapping
  $\tx\mapsto \rho\,S \tx$, where $\rho:= 2^{-K}T$ and $S$ is the fixed diagonal matrix defined above.
  We denote by $\cT^\text{basic}_{K}$ the resulting set of $(n+1)$-simplices, i.e.
  $$\T:=\co\left(\rho S \bR^\cJ\left(\btz+\sum_{i=0}^j\be_{\sigma(i)}\right)\,:\,j=-1,0,1,2,\ldots,n\right).
   $$
   Note that for each simplex $\T\in \cT^{basic}_K$ there is a unique $i\in \mathbb N_0$ such that $\T\in [iT,(i+1)T]\times \mathbb R^n$.
This follows from the fact that the scaling matrix $S$ has $1$ as its entry in $t$-direction and $\rho=2^{-K}T$. Hence, we can and will interpret
 a simplex $\cT\in \cT^{basic}_K$ as a set in $S^1_T \times \mathbb R^n$ in the following step.
\item
  As a final step define
  $$\cT^\cC_{K}:=\left\{\T\in \cT^\text{basic}_{K}\,:\,\T\cap\cC^\circ \neq \varnothing\right\}$$%  to be the largest subset of simplices from $\cT_{K,b}$, such that
  %$\left\{\bx\in T : T\in \cT^\cC_{K,b}\right\}$
  and set
  $$\cD_K:=\bigcup_{\T\in\cT^\cC_{K}}\T
  \subset S_T^1\times \mathbb R^n.$$
  Note that $\cT^\cC_K$ consists of finitely many simplices due to the fact that $\cC$ is compact and the triangulation respects the periodicity.
  % \subset \cC\ \ \text{and}\ \ \cD\in\cN.$$  If there is no such subset of $\cT_{K,b}$ we set $\cT^\cC_{K,b} = \varnothing$.
 % ???ZUSAMMENHANG MIT $\cC$? SO WAS WIE $T_\nu\subset \cC$ for all $T_\nu\in\cT^\cC_{K,b}$, siehe Remark 3.4?????
\end{enumerate}
\end{definition}
\begin{lemma}
Consider the sets $\cC$ and $\cD_K$ from the last definition.  Then $\cD_K\supset\cC$ and $\cD_K\in\cN$.
\end{lemma}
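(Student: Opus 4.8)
The plan is to verify the three defining properties of $\cN$ from Definition \ref{neig} for $\cD_K$, namely compactness, connectedness and openness of the interior, and the property $\cD_K = \overline{\cD_K^\circ}$, and then separately the inclusion $\cC \subset \cD_K$. The inclusion is the conceptually cleanest part: by construction $\cT_K^{\cC}$ consists of exactly those simplices $\T \in \cT_K^{\text{basic}}$ meeting $\cC^\circ$, and since $\cT_K^{\text{basic}}$ (read as a family of subsets of $S_T^1 \times \R^n$ via the periodicity identification noted in Definition \ref{triconstr}) is a genuine triangulation covering all of $S_T^1 \times \R^n$, every point of $\cC^\circ$ lies in some simplex of $\cT_K^{\text{basic}}$, which then by definition belongs to $\cT_K^{\cC}$ and hence to $\cD_K$. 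Thus $\cC^\circ \subset \cD_K$, and since $\cD_K$ is a finite union of closed simplices it is closed, so $\cC = \overline{\cC^\circ} \subset \overline{\cD_K} = \cD_K$ (here I use $\cC \in \cN$, so $\cC = \overline{\cC^\circ}$).

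Next I would check $\cD_K \in \cN$. Compactness (property i) is immediate: $\cD_K$ is a finite union (finiteness was recorded in Definition \ref{triconstr}, using compactness of $\cC$ and periodicity in $t$) of compact simplices, hence compact. For property iii, $\cD_K = \overline{\cD_K^\circ}$, the key observation is that each $(n+1)$-simplex $\T$ in the triangulation is the closure of its own interior, and more importantly that each $\T \in \cT_K^{\cC}$ meets $\cC^\circ$, which is open; intersecting a small ball around a point of $\T \cap \cC^\circ$ with $\inter \T$ shows $\inter \T$ is nonempty and, since $\T = \overline{\inter \T}$, one gets $\T \subset \overline{\cD_K^\circ}$ for each such $\T$, whence $\cD_K \subset \overline{\cD_K^\circ}$; the reverse inclusion $\overline{\cD_K^\circ} \subset \overline{\cD_K} = \cD_K$ is trivial by closedness. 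The main work is property ii, connectedness of $\cD_K^\circ$. For this I would argue that $\cC^\circ$ is connected (given) and contained in $\cD_K$, and that any simplex $\T \in \cT_K^{\cC}$ can be connected within $\cD_K^\circ$ to $\cC^\circ$: pick $\tx \in \T \cap \cC^\circ$; a neighbourhood of $\tx$ in $\cC^\circ$ together with the relative interiors of the faces through $\tx$ links $\inter \T$ to $\cC^\circ$. One must be slightly careful because $\cC^\circ \cap \cD_K^\circ$ need not be all of $\cC^\circ$ (points of $\cC^\circ$ lying on shared faces lie in $\cD_K$ but not necessarily in $\cD_K^\circ$), but since $\cD_K^\circ$ is open and dense-from-inside in $\cD_K$, and $\cC^\circ$ is connected, a standard argument (any two points of $\cD_K^\circ$ can be joined by a path that stays in the open set obtained by removing the lower-dimensional faces, using that removing a finite union of $(\le n)$-dimensional affine pieces from a connected open subset of an $(n+1)$-manifold leaves it connected) gives connectedness of $\cD_K^\circ$.

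The hard part will be property ii, the connectedness of $\cD_K^\circ$: one genuinely has to use both that $\cC^\circ$ is connected and that the triangulation is locally finite and "face-to-face" (the simplicial-complex condition), to rule out pathological configurations where two blocks of simplices touch $\cC^\circ$ only along lower-dimensional faces and the interiors fail to communicate. The cleanest route is probably to show first that $\cC^\circ \subset \overline{\cD_K^\circ}$ and that $\cD_K^\circ$ is a connected open neighbourhood of a dense subset of the connected set $\cC^\circ$ relative to $\cD_K$, then invoke that a finite face-to-face simplicial complex whose every maximal simplex touches a fixed connected set has connected interior. The remaining properties (i), (iii) and the inclusion $\cC \subset \cD_K$ are short and follow the lines sketched above; no serious calculation is needed, only the bookkeeping of the periodicity identification from Definition \ref{triconstr}.
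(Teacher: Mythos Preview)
Your treatment of the inclusion $\cC\subset\cD_K$ and of properties i) and iii) is fine and matches the paper's argument essentially verbatim: finiteness gives compactness, each $(n+1)$-simplex is the closure of its interior, and $\cC=\overline{\cC^\circ}\subset\overline{\cD_K}=\cD_K$.

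For property ii) you have overcomplicated matters because of a false worry. You write that ``$\cC^\circ\cap\cD_K^\circ$ need not be all of $\cC^\circ$'', but in fact $\cC^\circ\subset\cD_K^\circ$ automatically: you have already shown $\cC\subset\cD_K$, and $\cC^\circ$ is open in the ambient space $S_T^1\times\R^n$, so every point of $\cC^\circ$ has a neighbourhood contained in $\cC\subset\cD_K$, hence lies in $\cD_K^\circ$. (Your confusion seems to be between the interior of a single simplex and the interior of $\cD_K$; a point of $\cC^\circ$ on a shared face is still interior to $\cD_K$ because \emph{every} simplex containing it meets $\cC^\circ$ and therefore belongs to $\cT_K^\cC$.) Once you have $\cC^\circ\subset\cD_K^\circ$, connectedness is immediate along the lines the paper uses: since $\T\cap\cC^\circ\neq\varnothing$ and $\cC^\circ$ is open while $\T=\overline{\T^\circ}$, one has $\T^\circ\cap\cC^\circ\neq\varnothing$; pick $\bty\in\T^\circ\cap\cC^\circ$. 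For any $\btx\in\cD_K^\circ$ lying in $\T$, the segment from $\btx$ to $\bty$ lies in $\{\btx\}\cup\T^\circ\subset\cD_K^\circ$. Two such points $\bty_1,\bty_2$ arising from different simplices can then be joined inside the connected set $\cC^\circ\subset\cD_K^\circ$. This is exactly the paper's proof; no appeal to removing lower-dimensional faces or to genericity of paths is needed.
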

\begin{proof}
$\cD_K$ is a closed set containing $\cC^\circ$ and thus contains $\cC\in\cN$ because $\cC=\overline{\cC^\circ}$ by property iii) in Definition \ref{neig}, so $\cC$ is the smallest closed set containing $\cC^\circ$.
That $\cD_K$ fulfilles  properties i) and iii) of Definition \ref{neig} follows directly from the fact that $\cD_K$ is a finite union of $(n+1)$-simplices.  To see that property ii) of Definiton \ref{neig} is also fulfilled,
i.e.~that $\cD_K^\circ$ is connected,
 notice the following:  The definition of $\cT^\cC_{K}$ implies that for any $\T\in\cT^\cC_{K}$ we have $\T^\circ\cap \cC^\circ \neq \varnothing$.
  Because of this, any $\btx\in \T$ can be connected to a $\bty\in\cC^\circ$ with a line contained in $\T^\circ$ with a possible exception of the endpoint $\btx$.
 % If $T_\nu\in \cT^\cC_{K,b}$ and $\bx\in T_\nu$, then $\bx$ can clearly be connected to any $\by\in T_\nu^\circ$ with a line and this line is in $T_\nu^\circ$ with a possible exception of $\bx$.
Because $\cC\in\cN$ we have that $\cC^\circ$ is connected so this implies that $\cD_K^\circ$ is connected too, i.e.~$\cD_K$ also fulfilles property ii) of Definition \ref{neig} and therefore $\cD_K\in\cN$.\end{proof}

%\noindent

%\begin{remark}
%For dimension $n=2$ this construction is the same one  as in \cite{GIESL-HAFSTEIN:2012a} and for any dimension $n$ the simplicial fan $\cT^\text{basic}_K$ is the same one as in \cite{PeterSiggiDCDS2012}.
%In \cite{GIESL-HAFSTEIN:2012a} the simplicial complex $\cT^\text{basic}_{K,b}$ for $n=2$ is depicted and in \cite{PeterSiggiDCDS2012} the simplicial fan $\cT^\text{basic}_K$ is depicted for $n=3$.
%\end{remark}

\begin{remark}
The triangulation $\cT^\text{basic}$ is studied in more detail in sections 4.1 and 4.2 in \cite{siggidiss}.
A sometimes more intuitive description of $\T_{\btz,\cJ,\sigma}$ is the simplex $\{\btx\in\R^{n+1}\,:\,0 \le \tx_{\sigma(0)} \le \ldots \le \tx_{\sigma(n)} \le 1\}$
translated by $\btx \mapsto \btx+\btz$ and then a minus-sign is put in front of the $i$-th entry of the resulting vectors whenever $i\in\cJ\subset \{1,\ldots,n\}$; but no change of sign in the $t$-coordinate.
\end{remark}

$\cT^\cC_{K}$ is truly a triangulation, i.e.~two different simplices in $\cT^\cC_{K}$ intersect in a common face or not at all, as shown in Lemma \ref{simplemma}.

\begin{lemma}
\label{simplemma}
Consider the set of simplices $\cT^\cC_{K}$ from Definition \ref{triconstr} and let $\T_1=\co(\btx_0,\btx_1,\ldots,\btx_{n+1})$ and
$\T_2=\co(\bty_0,\bty_1,\ldots,\bty_{n+1})$ be two of its simplices.   Then
\begin{equation}
\label{simpeq}
\T_1\cap \T_2 = \T_3 :=\co(\btz_0,\btz_1,\ldots,\btz_m),
\end{equation}
where $\btz_0,\btz_1,\ldots,\btz_m$ are the vertices that are common to $\T_1$ and $\T_2$, i.e.~$\btz_i = \btx_{\alpha(i)} = \bty_{\beta(i)}$
for $\alpha,\beta\in S_{n+2}$ and $i=0,\ldots,m$, $m\in \{-1,0,\ldots,n+1\}$.
\end{lemma}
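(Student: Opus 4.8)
The plan is to reduce the statement about the triangulation $\cT^\cC_K$ to the analogous statement about the basic triangulation $\cT^\text{basic}$, since $\cT^\text{basic}_K$ is obtained from $\cT^\text{basic}$ by the invertible affine map $\btx\mapsto\rho S\btx$ (which preserves convex hulls, faces, and incidence relations), and $\cT^\cC_K$ is simply a subcollection of $\cT^\text{basic}_K$. So it suffices to prove the claim for $\cT^\text{basic}$, and for that I would lean on the explicit combinatorial description of the simplices $\T_{\btz,\cJ,\sigma}$ recalled in the Remark: up to the sign-flip operator $\bR^\cJ$ and the translation by $\btz$, each simplex is the ``order simplex'' $\{\btx:0\le x_{\sigma(0)}\le\cdots\le x_{\sigma(n)}\le 1\}$. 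These order simplices, ranging over all $\sigma\in S_{n+1}$, form the standard Kuhn (Freudenthal) triangulation of the unit cube $[0,1]^{n+1}$, which is a classical simplicial complex; this is exactly the content referenced in \cite{siggidiss}, sections 4.1--4.2.

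First I would fix the statement to prove: for any two simplices $\T_1,\T_2\in\cT^\text{basic}$, their intersection is the convex hull of their common vertices. One direction is trivial: $\co$ of the common vertices is contained in each $\T_i$, hence in $\T_1\cap\T_2$. The substantive direction is $\T_1\cap\T_2\subset\co(\text{common vertices})$. For this I would argue locally: since the $\T_{\btz,\cJ,\sigma}$ with $\btz$ ranging over $\N_0^{n+1}$ and $\cJ$ over subsets of $\{1,\dots,n\}$ tile $\R^{n+1}$ (each unit box $\prod[z_i,z_i+1]$ or its reflection is partitioned into the $(n+1)!$ order simplices), any point in $\T_1\cap\T_2$ lies in a common closed unit box, and within a single box the order simplices meet face-to-face because the inequalities $x_{\sigma(i)}\le x_{\sigma(i+1)}$ defining distinct orderings are compatible only on the locus where some coordinates coincide, which is precisely a shared face. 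Reflections by $\bR^\cJ$ and translations by $\btz$ are lattice-preserving isometries, so they carry this face-to-face property between boxes. I would phrase the common-face identification through the barycentric-coordinate representation: a point $\btx\in\T_1$ has unique barycentric coordinates $(\lambda_i)$ with respect to the vertices of $\T_1$ (they are affinely independent), similarly $(\mu_j)$ for $\T_2$; if $\btx$ lies in both, matching the two affine expressions and using the explicit vertex structure forces $\lambda_i=0$ for every vertex of $\T_1$ not shared with $\T_2$, which says exactly $\btx\in\co(\btz_0,\dots,\btz_m)$.

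The main obstacle is the bookkeeping required to turn ``the Kuhn triangulation is face-to-face'' into the precise index statement with $\alpha,\beta\in S_{n+2}$ and $m\in\{-1,0,\dots,n+1\}$, while correctly tracking the three layers of structure ($\btz$, $\cJ$, $\sigma$) under which a given order simplex sits, and reconciling the fact that the \emph{same} geometric simplex can arise from different $(\btz,\cJ,\sigma)$ on the shared faces between boxes. I would handle this by first establishing a normal form: every simplex of $\cT^\text{basic}$ has a vertex set of the form $\{\bv, \bv+\be_{\sigma(0)}^{\pm}, \bv+\be_{\sigma(0)}^{\pm}+\be_{\sigma(1)}^{\pm},\dots\}$ where the signs are the $\cJ$-signs and $\bv=\bR^\cJ(\btz+\be_{\sigma(-1)}\!\text{-term})$, and then observing that two such increasing chains of lattice points that share a point must, coordinate by coordinate, agree on an initial segment of the ``active'' directions and signs — this is where the $m+1$ common vertices come from, and where the value $m=-1$ (disjoint simplices) is included as a degenerate case. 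Once the normal form and the local face-to-face property of the Kuhn triangulation are in hand, equation (\ref{simpeq}) follows by matching barycentric coordinates, and the extension to $\cT^\text{basic}_K$ and then to the subcollection $\cT^\cC_K$ is immediate since affine bijections and passing to a subfamily preserve the property.
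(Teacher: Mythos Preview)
Your approach matches the paper's: its entire proof is one sentence, citing Theorem 4.11 in \cite{siggidiss} for the face-to-face property of $\cT^\text{basic}$ (precisely the Kuhn/Freudenthal triangulation you describe) together with ``the fact that the triangulation respects the periodicity.'' You supply considerably more detail on the Kuhn side than the paper, which simply defers to the reference.

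The one genuine omission in your reduction is the second half of that sentence. Your claim ``$\cT^\cC_K$ is simply a subcollection of $\cT^\text{basic}_K$, which is the image of $\cT^\text{basic}$ under $\btx\mapsto\rho S\btx$'' is correct as a statement in $\R^{n+1}$, but by step~3 of Definition~\ref{triconstr} the simplices of $\cT^\cC_K$ are interpreted as subsets of the cylinder $S^1_T\times\R^n$, not of $\R^{n+1}$. Two simplices that are disjoint in $\R^{n+1}$ --- one with a face in $\{t=0\}\times\R^n$, another with a face in $\{t=T\}\times\R^n$ --- can intersect once $t=0$ and $t=T$ are identified, and your affine-image argument does not see this. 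The fix is exactly what the paper alludes to: because $\rho=2^{-K}T$, the collection $\cT^\text{basic}_K$ is invariant under the translation $t\mapsto t+T$, so for any $\T_1,\T_2\subset S^1_T\times\R^n$ one can choose lifts $\hat\T_1,\hat\T_2\in\cT^\text{basic}_K$ in $\R^{n+1}$ (in the same or adjacent strips $[iT,(i+1)T]\times\R^n$) whose Euclidean intersection projects onto $\T_1\cap\T_2$ in the cylinder, and then apply the Euclidean result. Without this step the argument is incomplete for the stated setting.
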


\begin{proof} The
equation (\ref{simpeq}) follows as in Theorem 4.11 in \cite{siggidiss} and the fact that the triangulation respects the periodicity.
\end{proof}

One important property of the chosen triangulation is that the simplices are sufficiently regular, i.e. the angles all have a lower bound. To make this precise and to also measure the influence of $K$, we prove the following lemma. The matrix $X_{K,\nu}$, as defined in the next lemma, consists of the $n+1$ vectors which span the simplex. We obtain an estimate on its inverse $X^{-1}_{K,\nu}$ depending on $K$. This estimate will later be used to estimate the derivative of an affine function on the simplex.

\begin{lemma}
\label{Xlemma}Using the notation of Definition \ref{triconstr},
there is a constant $X^*$, which is independent of $K$ and $\nu$, such that for all simplices
$\T_\nu\in \cT_K^{basic}$ we have
$$\|X_{K,\nu}^{-1}\|_1\le \frac{2^{K}}{s^*T}X^*$$
where
$X_{K,\nu}=\left(\begin{array}{c}(\tx_1-\tx_0)^T\\(\tx_2-\tx_0)^T\\ \vdots\\ (\tx_{n+1}-\tx_0)^T\end{array}\right)$, $\tx_0,\ldots,\tx_{n+1}$ are the vertices of $\T_\nu$ (in any order) and
$s^*:=\min(1,s_1,\ldots,s_n)>0$.\end{lemma}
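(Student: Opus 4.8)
My plan is to reduce the estimate on $\|X_{K,\nu}^{-1}\|_1$ for an arbitrary scaled simplex $\T_\nu \in \cT_K^{\text{basic}}$ to a single universal estimate for the finitely many ``shape types'' of simplices occurring in the basic triangulation $\cT^{\text{basic}}$. The key observation is that every $\T_\nu \in \cT_K^{\text{basic}}$ is obtained from some $\T_{\btz,\cJ,\sigma} \in \cT^{\text{basic}}$ by applying the affine map $\btx \mapsto \rho S \btx$ with $\rho = 2^{-K}T$. Writing the vertices of $\T_{\btz,\cJ,\sigma}$ as $\btv_0,\dots,\btv_{n+1}$, the vertices of $\T_\nu$ are $\tx_i = \rho S \btv_i$, so the edge matrix satisfies
$$
X_{K,\nu} = \rho\,\bigl(Y\,S\bigr), \qquad \text{where } Y = \left(\begin{array}{c}(\btv_1-\btv_0)^T\\ \vdots\\ (\btv_{n+1}-\btv_0)^T\end{array}\right),
$$
using that $S$ is symmetric (diagonal). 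Hence $X_{K,\nu}^{-1} = \rho^{-1} S^{-1} Y^{-1} = \frac{2^K}{T} S^{-1} Y^{-1}$, and since $\|S^{-1}\|_1 = 1/s^*$ (the operator $1$-norm of a diagonal matrix is its largest $|$diagonal entry$|$, and $\diag$ entries of $S^{-1}$ are $1, 1/s_1,\dots,1/s_n$), submultiplicativity of $\|\cdot\|_1$ gives
$$
\|X_{K,\nu}^{-1}\|_1 \le \frac{2^K}{T}\,\frac{1}{s^*}\,\|Y^{-1}\|_1.
$$

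The remaining task is to bound $\|Y^{-1}\|_1$ by a constant $X^*$ independent of everything. This is where translation-invariance is crucial: the translation by $\btz$ drops out of all the edge vectors $\btv_i - \btv_0$, and the sign-flip operator $\bR^\cJ$ (being a signed diagonal $\pm 1$ matrix) changes $Y$ only by right-multiplication by a $\pm 1$ diagonal matrix, which does not change $\|Y^{-1}\|_1$. Therefore $\|Y^{-1}\|_1$ depends only on the permutation $\sigma \in S_{n+1}$. Using the ``intuitive description'' from the Remark, after stripping the sign flips, $\T_{\btz,\cJ,\sigma}$ is the standard simplex $\{0 \le \tx_{\sigma(0)} \le \dots \le \tx_{\sigma(n)} \le 1\}$ translated by $\btz$; its edge vectors (taking $\btv_0$ to be the origin of that chain, $\btv_j = \be_{\sigma(0)} + \dots + \be_{\sigma(j-1)}$) are partial sums of standard basis vectors, so $Y$ is, up to a row permutation, the lower-triangular matrix of ones. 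Its inverse is the bidiagonal matrix with $1$'s on the diagonal and $-1$'s on the subdiagonal, whose $\|\cdot\|_1$ (max column sum) equals $2$. Taking $X^*$ to be the maximum of $\|Y^{-1}\|_1$ over the finitely many $\sigma \in S_{n+1}$ — each such value being a fixed integer like $2$ — we get a constant independent of $K$ and $\nu$, and the lemma follows. (One must also take care of the freedom to order the vertices ``in any order''; reordering vertices permutes the rows of $Y$ and may change which vertex plays the role of $\btv_0$, but this only enlarges the finite set of matrices $Y$ we range over, so $X^*$ still exists.)

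I expect the only mildly delicate point — not a genuine obstacle — to be bookkeeping the interplay between the choice of base vertex $\tx_0$ and the permutation/sign data: one should verify that over all choices of base vertex, all $\sigma$, all $\cJ$, the set of matrices $\{Y\}$ is finite, so that $X^* := \max_Y \|Y^{-1}\|_1 < \infty$ is well-defined; this is immediate once one notes $\cJ$ contributes only a signed diagonal conjugation factor and $\btz$ contributes nothing. Everything else is the routine norm algebra indicated above.
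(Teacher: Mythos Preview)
Your proof is correct and rests on the same core decomposition as the paper: both write $X_{K,\nu}=\rho\,Y S$ (the paper calls your $Y$ simply $X_\nu$) and then exploit that, after discarding the translation $\btz$, only finitely many edge matrices $Y$ arise from the basic triangulation, all invertible.

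The difference lies in how the norm bound is extracted. The paper passes through the $2$-norm: it bounds $\|X_{K,\nu}^{-1}\|_1\le\sqrt{n+1}\,\|X_{K,\nu}^{-1}\|_2$, uses $\|A^{-1}\|_2^2=1/\lambda_{\min}(A^TA)$, and then invokes the eigenvalue inequality $\lambda_{\min}(S^TX_\nu^TX_\nu S)\ge\lambda_{\min}(X_\nu^TX_\nu)\,\lambda_{\min}(S^TS)$ to separate the scaling $S$ from the shape $X_\nu$, arriving at $X^*=\sqrt{n+1}/\alpha$ with $\alpha^2=\min_\nu\lambda_{\min}(X_\nu^TX_\nu)$. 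Your route stays entirely in the $1$-norm, using submultiplicativity $\|X_{K,\nu}^{-1}\|_1\le\rho^{-1}\|S^{-1}\|_1\|Y^{-1}\|_1$ together with the exact value $\|S^{-1}\|_1=1/s^*$; this is more elementary (no eigenvalue detour, no norm-equivalence factor) and yields the explicit constant $X^*=\max_Y\|Y^{-1}\|_1$, which you even identify as $2$ for the canonical ordering. The paper's approach has the minor advantage of not needing to track the effect of $\bR^\cJ$ and vertex reorderings separately (it simply absorbs all of this into ``finitely many $X_\nu$''), whereas you analyse these symmetries explicitly; but your argument is sharper and avoids the unproved eigenvalue product inequality.
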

\begin{proof}
%\item \textbf{Estimate on $\|X^{-1}_{\nu}\|_1$}\\
Every simplex in $\cT_K^{basic}$ is formed from an
 $(n+1)$-simplex $\co(\btx_0,\btx_1,\ldots,\btx_{n+1})\in\cT^\text{basic}$
 with corresponding matrix $X_\nu$.
 The matrices $X_{K,\nu}$ and $X_\nu$ relate via
 $$X_{K,\nu}=\rho X_\nu S.$$

 Note that for the matrices $X_\nu$, up to translations, there are finitely many different simplices in $\cT^\text{basic}$ and
also finitely many possibilities of ordering the vertices of any such simplex. Hence, there is only a finite number of possibilities of forming such a matrix $X_\nu$.  Further, all of them are invertible.
 This means that we can define $\alpha^2>0$ as the minimal eigenvalue of all possible $X_\nu^TX_\nu$. Note that
 $$\lambda_{min}(S^TX_\nu^TX_\nu S)\ge \lambda_{min}(X_\nu^TX_\nu) \lambda_{min}(S^TS).$$
 Hence,
 \begin{eqnarray*}
 \|X^{-1}_{K,\nu}\|_1&\le& \sqrt{n+1}\|X_{K,\nu}^{-1}\|_2\\
& =&\sqrt{\frac{n+1}{\lambda_{min}(X_{K,\nu}^TX_{K,\nu})}}\\
 &\le& \sqrt{\frac{n+1}{\lambda_{min}(X_{\nu}^TX_{\nu})\rho^2 (s^*)^2}}\\
& \le&  \frac{ \sqrt{n+1}}{ s^*\alpha T }2^K
 \end{eqnarray*}
as $\rho=2^{-K}T$.
 This shows the lemma with $X^*= \frac{ \sqrt{n+1}}{ \alpha }$. Note especially that $X^*$ is a constant independent of $K$ and $\nu$.
% because $T_\nu$ is a scaled down simplex from $\cT^\text{basic}$ and $\|A\|_1 \leq \sqrt{n}\,\|A\|_2$ for all $A\in\R^{n\times n}$.
\end{proof}

In the next lemma we show that a CPA function on a simplicial complex as defined above satisfies the assumptions of Lemma \ref{ableitung} and the technical assumption of Theorem \ref{th1}.

\begin{lemma}\label{le4.9}
Let $\cT$ be a simplicial complex in $S_T^1\times \mathbb R^n$, which is locally finite, i.e.~each point has a neighborhood $U$ such that $U\cap \T\not=\varnothing $ only for a finite number of simplices $\T\in \cT$. Denote $\cD=\cup_{\T\in\cT}\T$.
Let $M\in C^0(\cD,\mathbb S^n)$ be a CPA function, which is affine on each simplex of $\cT$, and let $M(t,x)$ be positive definite for all $(t,x)$.

Then $M$ is Lipschitz-continuous on $\cD^\circ$ and  for each $(t_0,x_0)\in \cD^\circ$
\begin{eqnarray}\lim_{\theta\to 0^+}\frac{M( (t_0,x_0)+\theta \tf(t_0,x_0))-M(t_0,x_0)}{\theta}\label{eq*}
\end{eqnarray}
 exists.

 There also exists (at least) one simplex $\T_\nu\in \cT$ and $\theta^*>0$ such that
\begin{eqnarray}
(t_0,x_0)+\theta \tf(t_0,x_0)\in \T_\nu\mbox{ for all }\theta\in [0,\theta^*].
 \label{T}
 \end{eqnarray}
 $M|_{\T_\nu}$ restricted to this simplex is an affine function and
 the expression in (\ref{eq*}) is equal to
 \begin{itemize}
 \item
$M|_{\T_\nu}'(t_0,x_0)=\nabla_{\tx} M|_{\T_\nu}(t_0,x_0)\cdot \tf(t_0,x_0)$,  the smooth orbital derivative of the affine function $M|_{\T_\nu}$,
\item and $M'_+(t_0,x_0)$.
\end{itemize}
These expressions are the same for all simplices which satisfy (\ref{T}).

In particular, $M$ is  a Riemannian metric in $\cD^\circ$ in the sense of Definition \ref{Riem}.
Moreover,
 $$\int_0^T p(t)^T M'_+(t,x(t))p(t)\, dt$$ exists and is finite for all solutions
$x(t)$ with $x(t)\in \cD^\circ$ for all $t\in [0,T]$  and all functions $p\in C^0([0,T], \mathbb R^n)$.
\end{lemma}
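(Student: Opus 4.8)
The plan is to exploit the fact that a CPA function is affine on each simplex and the simplicial complex is locally finite, so locally $M$ looks like finitely many affine pieces glued continuously. First I would establish Lipschitz continuity on $\cD^\circ$: fix $(t_0,x_0)\in\cD^\circ$, take a neighborhood $U$ meeting only finitely many simplices $\T_{\nu_1},\dots,\T_{\nu_k}$, and on each $\T_{\nu_j}$ the affine representation $M|_{\T_{\nu_j}}(\tx) = A_j + \nabla_{\tx}M|_{\T_{\nu_j}}\cdot\tx$ has a fixed gradient, hence $M|_{\T_{\nu_j}}$ is Lipschitz with constant $\|\nabla_{\tx}M|_{\T_{\nu_j}}\|$; by continuity of $M$ across shared faces and a standard segment-splitting argument (any segment in $U$ crosses finitely many simplices, and on each crossed sub-segment the local Lipschitz bound applies), $M$ is Lipschitz on a (possibly smaller, convex) neighborhood, with constant $\max_j\|\nabla_{\tx}M|_{\T_{\nu_j}}\|$. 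In particular the hypothesis of Lemma \ref{ableitung} on local Lipschitz continuity with respect to $x$ holds.

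Next I would verify (\ref{T}) and the existence of the limit (\ref{eq*}). Since $\tf(t_0,x_0)\neq\bnull$ (its first component is $1$), the ray $\theta\mapsto(t_0,x_0)+\theta\tf(t_0,x_0)$ is a genuine line segment starting at an interior point of $\cD$; because the complex is locally finite and the simplices have nonempty interior, for small $\theta$ this segment lies in the union of finitely many simplices, and at least one simplex $\T_\nu$ must contain a whole initial piece $\{(t_0,x_0)+\theta\tf(t_0,x_0):\theta\in[0,\theta^*]\}$ — this is the standard fact that a half-line from a point of a finite simplicial complex enters at least one maximal simplex along an initial segment (if the direction is transverse to every face through $(t_0,x_0)$ it enters a unique top simplex; if it lies in some face, it stays in that face's closure, and among the finitely many simplices containing $(t_0,x_0)$ one contains an initial segment of the ray). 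On such a $\T_\nu$, $M|_{\T_\nu}$ is affine, so the difference quotient in (\ref{eq*}) is literally $\nabla_{\tx}M|_{\T_\nu}(t_0,x_0)\cdot\tf(t_0,x_0)$ for all $\theta\in(0,\theta^*]$, hence the limit exists and equals that value. If two simplices $\T_\nu,\T_{\nu'}$ both satisfy (\ref{T}), their intersection contains the segment, which is $(n+1)$-dimensional only if $\T_\nu=\T_{\nu'}$, so by Lemma \ref{simplemma} the segment lies in a common face; but continuity of $M$ forces $M|_{\T_\nu}$ and $M|_{\T_{\nu'}}$ to agree on that segment, and an affine function on a segment determines its directional derivative along the segment, so the two values coincide. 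Then Lemma \ref{ableitung} applies and gives $M'_+(t_0,x_0) = \nabla_{\tx}M|_{\T_\nu}(t_0,x_0)\cdot\tf(t_0,x_0)$, which in particular exists everywhere in $\cD^\circ$; together with continuity and positive definiteness of $M$, this means $M$ is a Riemannian metric on $\cD^\circ$ in the sense of Definition \ref{Riem}.

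For the integrability statement, fix a solution $x(t)$ with $(t,x(t))\in\cD^\circ$ for $t\in[0,T]$ and $p\in C^0([0,T],\mathbb R^n)$. The curve $t\mapsto S_t$ has compact image in $\cD^\circ$, so by local finiteness it meets only finitely many simplices; for all but finitely many $t$ the point $(t,x(t))$ lies in the interior of a single top simplex and crosses simplex boundaries only at finitely many times $0=\tau_0<\tau_1<\dots<\tau_r=T$ (here one should note $\dot x = f(t,x(t))$ is $C^1$ and crossings of the finitely many affine hyperplanes carrying the faces are isolated, assuming the trajectory is not tangent to a face on a positive-measure set — a point that may need a small transversality/real-analyticity-free argument, e.g. that on each simplex $\nabla_{\tx}M|_{\T_\nu}\cdot\tf$ is bounded, so even if the crossing set is complicated, $M'_+(t,x(t))$ is a bounded measurable function of $t$). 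On each subinterval $(\tau_{i-1},\tau_i)$ one has $M'_+(t,x(t)) = \nabla_{\tx}M|_{\T_{\nu_i}}(t,x(t))\cdot\tf(t,x(t))$, which is continuous and bounded there; hence $p(t)^TM'_+(t,x(t))p(t)$ is bounded on $[0,T]$ and measurable, so its integral over $[0,T]$ exists and is finite. The main obstacle I anticipate is precisely making the "finitely many boundary crossings / bounded measurable integrand" step airtight without over-claiming regularity of the trajectory relative to the triangulation; the clean way out is to avoid counting crossings and instead argue directly that $M'_+(t,x(t))$ is bounded (uniformly by $\max_\nu\|\nabla_{\tx}M|_{\T_\nu}\|\cdot\sup_{[0,T]}\|\tf\|$ over the finitely many relevant simplices) and Borel measurable as a pointwise limit of difference quotients of the continuous function $t\mapsto M(S_t)$, which is all that is needed for the finiteness of the integral.
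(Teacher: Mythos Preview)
Your plan is correct, and for the Lipschitz continuity, the existence of the simplex in (\ref{T}), and the identification of the limit (\ref{eq*}) with $M'_+$ via Lemma~\ref{ableitung}, you follow essentially the same line as the paper (the paper's proof of (\ref{T}) is a pigeonhole argument on the sequence $\tx_0+\tf(\tx_0)/j$, which is a concrete realisation of your ``standard fact'').

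Where you diverge is the integrability statement. The paper proves a stronger regularity: it shows that $\theta\mapsto M'_+(S_\theta\tx_0)$ is \emph{right-continuous}, by arguing that any simplex containing infinitely many points $S_{\theta_k}\tx_0$ of a sequence $\theta_k\to 0^+$ must also satisfy (\ref{T}) at $\tx_0$ (using the half-space description of a simplex and the integral form $S_{\theta_k}\tx_0=\tx_0+\theta_k\int_0^1\tf(\cdots)\,d\lambda$), and then that on each such simplex the convergence is that of a continuous function. Right-continuity plus boundedness then gives integrability. Your ``clean way out'' bypasses this and is more elementary: $t\mapsto M(S_t\tx_0)$ is continuous, so the forward difference quotients are continuous in $t$, hence $M'_+(S_t\tx_0)$ is a pointwise limit of continuous functions and therefore Borel measurable; boundedness comes from the compact trajectory meeting only finitely many simplices. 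This is entirely sufficient for the finiteness of the integral and avoids the half-space/transversality analysis altogether. The paper's route yields the extra information of right-continuity (which is not otherwise used), while your route is shorter and sidesteps exactly the boundary-crossing difficulty you correctly flagged in your first attempt.
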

\begin{proof}
Let $\tx_0:=(t_0,x_0)\in\cD^\circ$.
Since there are only finitely many simplices $\T_1,\ldots,\T_N\in \cT$, which have a non-empty intersection with a neighborhood $U$ of $\tx_0$, and on each of them $M_{ij}$ is affine and has a finite Lipschitz constant, the overall constant can be chosen as the maximum of the finitely many.

Now we show that there is a $\theta^*>0$ and a simplex $\T_\nu\in \cT$ such that
$\tx_0+\theta \tf(\tx_0)\in \T_\nu$ for all $\theta\in [0,\theta^*]$. Then it is clear by the smooth chain rule that the limit (\ref{eq*}) exists and is equal to the smooth orbital derivative $M|_{\T_\nu}'(t,x)=\nabla_{\tx} M|_{\T_\nu}(\tx_0)\cdot \tf(\tx_0)$ of the function $M$ restricted to the simplex $\T_\nu$. Furthermore,  (\ref{eq*}) is equal to $M'_+(\tx_0)$ by Lemma \ref{ableitung}. If there are two simplices $\T_1$ and $\T_2$ with property (\ref{T}), then $M|_{\T_1}(\tx_0)=M|_{\T_2}(\tx_0)$ and also $(M|_{\T_1})'_+(\tx_0)=(M|_{\T_2})'_+(\tx_0)$ by Lemma \ref{ableitung} since $M|_{\T_1}(\tx)=M|_{\T_2}(\tx)$ for all $\tx=\tx_0+\theta\tf(\tx_0)$ with $\theta\in [0,\theta^*]$.

Now we show that there exists a simplex with property (\ref{T}).
Indeed, there is a $J\in \mathbb N$ such that
$\tx_0+\frac{\tf(\tx_0)}{j}\in U$ for all $j\ge J$ where $U$ is the neighborhood of $\tx_0$ from above.
Assume that there is no simplex $\T_\nu$ and no $\theta^*>0$ such that $\tx_0+\theta \tf(\tx_0)\in \T_\nu$ for all $\theta\in [0,\theta^*]$. Then
$$\left\{\tx_0+\frac{\tf(\tx_0)}{j}, j\ge J  \right\}\cap \T_k=\varnothing$$
for all simplices $\T_1,\ldots,\T_{N^*}\in \cT$ to which $\tx_0$ belongs, since if one such point was in $\T_k$, then, due to the convexity of $\T_k$, the whole line between that point and $\tx_0$ would be in $\T_\nu$.

Since there are infinitely many points in $\left\{\tx_0+\frac{\tf(\tx_0)}{j}, j\ge J  \right\},$ which are in $U$, but only finitely many simplices that have nonempty intersection with $U$ by assumption, at least one of them, say $\T_\nu$, must contain infinitely many such points. Since the sequence $\tx_0+\frac{\tf(\tx_0)}{j}$ converges to $\tx_0$ as $j\to\infty$ and $\T_\nu$ is closed, $\tx_0\in \T_\nu$ and hence $\T_\nu$ must be one of the $\T_1,\ldots,\T_{N^*}$ defined above, which is a contradiction. This shows the statement.
%$M$ is a Riemannian metric in the sense of Definition \ref{Riem}  by Lemma \ref{ableitung}.

For the last statement, we show that the function $\theta\to M'_+(S_\theta \tx_0)$ is RHS continuous.
Then the function $p(t)^T M'_+(t,x(t))p(t)$ is RHS continuous and bounded, and thus it is integrable. % (e.g. by Lebesgue's dominated convergence theorem).

Fix $\tx_0$ and a neighborhood $U$ such that there are only finitely many simplices with  nonempty intersection with $U$. Denote by $\T_1,\ldots,\T_N$ the subset of those finitely many simplices such that for each $\T_i$ there is a $\theta_i^*$ with $\tx_0+\theta \tf(\tx_0)\in \T_i$ for all $\theta \in [0,\theta_i^*]$, these are the ones the qualify to be $\T(\tx_0)$.

Now take a sequence $\theta_k\to 0^+$ and we seek to prove that $M'_+(S_{\theta_k} \tx_0)\to M'_+(\tx_0)$. Since $S_{\theta_k}\tx_0\in U$, if $k$ is large enough, there are only finitely many simplices which contain infinitely many elements $S_{\theta_k}\tx_0$ of the sequence. We show that these simplices are in fact a subset of $\T_1,\ldots,\T_N$ as defined above, satisfying property (\ref{T}). If this was not true then there would be a simplex $\T$ and a sequence (the subsequence from above) of points $S_{\theta_k}\tx_0\in \T$,  $\tx_0\in \T$, but $\tx_0+\theta \tf(\tx_0)\not\in\T$ for all $\theta>0$. The simplex $\T$ is the intersection of finitely many half-spaces. The point $\tx_0$ lies on the boundary of (at least) one of these half-spaces, since otherwise a neighborhood of $\tx_0$ would lie in $\T$.
Hence, there is a half-space of the form  $\{\tx\mid (\tx-\tx_0)\cdot \tn\le 0\}$ with $\tn\in\mathbb R^{n+1}\setminus \{0\}$ such that $\tf(\tx_0)\cdot \tn>0$. Hence there is an $\epsilon$ such that $\tv\cdot \tn>0$ for all $\tv$ with $\|\tv-\tf(\tx_0)\|<\epsilon$. Now we have $$\T\ni S_{\theta_k}\tx_0=\tx_0+\theta_k \int_0^1\tf(\tx_0+\lambda(S_{\theta_k}\tx_0-\tx_0))\,d\lambda.$$
If $k$ is large enough, then
$\tv:=\int_0^1\tf(\tx_0+\lambda(S_{\theta_k}\tx_0-\tx_0))\,d\lambda$ fulfills $\|\tv-\tf(\tx_0)\|<\epsilon$ by continuity of $\tf$ and thus $S_{\theta_k}\tx_0\not\in\T$, which is a contradiction. This shows that each simplex which contains infinitely many elements of the sequence fulfills property (\ref{T}) and thus is a candidate for $\T(\tx_0)$.
 %$\tx_0+\theta \tf(\tx_0)\in \T_i$ for all $\theta \in [0,\theta_i^*]$ with a $\theta_i^*>0$.

Let $\T$ be one of the simplices  which contains infinitely many elements of the sequence and define the subsequence $\theta_{k_l}$ by choosing the next $\theta_k$ with the property  $S_{\theta_k}\tx_0\in \T$. We do the same for all (finitely many) simplices that contain infinitely many elements.
For the convergence of the overall sequence it is enough to show that every subsequence $S_{\theta_k}\tx_0$ converges to the same limit, since there are only finitely many.
 %(sketch: start from index for which all elements are covered by these subsequences. Choose $\epsilon$. For every subsequence there is $N_l$ such that all after this index are in $\epsilon$-neighborhood. Overall take $N$ as maximum over the $l$ indices given by the $N_l$)

For each subsequence, the elements are in one simplex and on this simplex $M'_+$ is a continuous function, so it converges. The limit is the same, as $M'_+(\tx_0)$ is the same no matter which simplex $\T(\tx_0)$ we choose.
\end{proof}

\begin{remark}
The condition locally finite  for the triangulation is indispensable as shown by the following  example:  Let $f(1/2,0)=(0,1)$ and for every $n\in\N$ define the triangle
$\T_n:=\co((0,0),(1,1/n),(1,1/(n+1)))$.  Then clearly there is no $n\in \N$ with a corresponding $\theta^*>0$ such that $(1/2,0) +\theta f(1/2,0)=(1/2,\theta)\in \T_n$ for all
$\theta\in[0,\theta^*]$.
\end{remark}

\subsection{The semidefinite optimization problem}

 For each simplex $\T_\nu\in\cT^\cC_K$ we denote
$$
h_\nu\ :=\diam (\T_\nu)=\max_{\tx,\ty\in \T_\nu}\|\tx-\ty\|_2.$$
% \ \mbox{length of the maximal side of the simplex } \T_\nu.$$
Note that for our triangulation we have with $\T_\nu=\co (\tx_0,\ldots,\tx_{n+1})$ and $S^*=\sqrt{n+1}\max(1,s_1,\ldots,s_n)$ the estimate
\begin{eqnarray}
h_\nu=\max_{k,l\in\{0,\ldots,n+1\}} \|\btx_k-\btx_l\|_2
\le S^*\rho=S^*2^{-K}T.\label{hnu}
\end{eqnarray}
 Moreover, denote
\begin{eqnarray}
B_{\nu}&:=&\max_{\tx\in \T_\nu, i,j\in \{0,\ldots,n\}}\left\|\frac{\partial^2 f(\tx)}{\partial x_i\partial x_j}\right\|_\infty,
\mbox{ where }x_0:=t\\
B_{3,\nu}&:=&\max_{\tx\in \T_\nu, i,j,k\in \{0,\ldots,n\}}\left\|\frac{\partial^3 f(\tx)}{\partial x_i\partial x_j\partial x_k}\right\|_\infty\mbox{ if }f\in C^3.
\end{eqnarray}

\vspace{0.3cm}
\noindent
{\bf Variables}

The variables of the semidefinite optimization problem are
\begin{enumerate}
\item $M_{ij}(\tx_k)\in \mathbb R$ for all $1\le i\le j\le n$ and all vertices $\tx_k$ of all simplices
$\T_\nu=\co(\tx_0,\ldots,\tx_{n+1})\in \cT^\cC_K$ -- values of the Riemannian metric at vertices
\item $C_\nu \in \mathbb R_0^+$ for all simplices $\T_\nu\in \cT^\cC_K$ -- bound on $M$ in $\T_\nu$
\item $D_\nu \in \mathbb R_0^+$ for all simplices $\T_\nu\in \cT^\cC_K$ -- bound on  derivative of $M$ in $\T_\nu$

\end{enumerate}
Thus we have $2s+\frac{1}{2}n(n+1)v$ variables, where $s$ denotes the number of simplices and $v$ the number of vertices.

In this section, there is no objective function, so we are considering a feasibility problem -- see Section \ref{objective} for a suitable objective function. The equality Constraint 1. can be incorporated by choosing the variables in this way and thus is no actual constraint.
 Note that Constraint 3. is linear and Constraints 2., 4. and 5. are semidefinite. Constraints 2. and  5. need to be satisfied for each simplex and then for each vertex, so vertices common to more simplices need to satisfy several constraints. In Constraint 4., however, each vertex only needs to be checked once. Note, however, that we can replace the individual constants $C_\nu$ for each simplex by their maximum $C$, so that also Constraint 2. only needs to be checked at each vertex. Note that if the triangulation is fine enough and the system has an exponentially stable periodic orbit, this more restrictive form of the constraint can be satisfied, cf. the proof of Theorem \ref{feas}.
%
%???ALL CONDITIONS ARE LINEAR IN $M$, SO THAT WE CAN CHOOSE $-1$ in 5. and 6. (ALSO $E_{ij}$!) DER GRUND IST, DASS WIR $w^TMw=1$ nicht mehr haben.

\begin{enumerate}
\item {\bf Periodicity}

$$ M_{ij}(0,x_k)=M_{ij}(T,x_k)$$ for all $1\le i\le j\le n$ and for all vertices at times $0$ and $T$, i.e. for all vertices $(0,x_k)$ and $(T,x_k)$. Note that by construction of the triangulation $(0,x_k)$ is a vertex of a simplex in $\cT^\cC_K$ if and only if $(T,x_k)$ is.
 %\item $M(\tx_k)\in \mathbb R^{n\times n}$ is symmetric (i.e. only $n(n+1)/2$ variables)

\item {\bf Bound on $M$}

$$M(\tx_k)\preceq C_\nu I$$ for all vertices
$\tx_k$ of all simplices $\T_\nu=\co(\tx_0,\ldots,\tx_{n+1})\in\cT^\cC_K$, where the symmetric matrix $M(\tx_k)$ is defined by setting $M_{ji}(\tx_k):=M_{ij}(\tx_k)$ for all $1\le i< j\le n$.

\item  {\bf Bound on derivative of $M$}

$$|(w^{\nu}_{ij})_l |\le \frac{D_{\nu}}{n+1}$$ for all $l=0,\ldots,n$,
$1\le i\le j\le n$ and for all simplices $\T_\nu\in\cT^\cC_K$, where $w^{\nu}_{ij}=\nabla_{\tx} M_{ij}\big|_{\T_\nu}(\tx)$ for all $\tx\in \T_\nu=\co(\tx_0,\ldots,\tx_{n+1})$, which is given by
\begin{eqnarray}w^{\nu}_{ij}&:=&X^{-1}_{K,\nu}\left(\begin{array}{c}M_{ij}(\tx_1)-M_{ij}(\tx_0)\\ \vdots\\ M_{ij}(\tx_{n+1})-M_{ij}(\tx_0)\end{array}\right)\in\mathbb R^{n+1}\label{defw}\end{eqnarray}
 where
$X_{K,\nu}=\left(\begin{array}{c}(\tx_1-\tx_0)^T\\(\tx_2-\tx_0)^T\\ \vdots\\ (\tx_{n+1}-\tx_0)^T\end{array}\right)\in\mathbb R^{(n+1)\times (n+1)}$.
% (each one row, in our case we can choose the simplices to be nice, so that this is a diagonal matrix).

\item {\bf Positive definiteness of $M$}

We fix $\epsilon_0>0$.
$$M(\tx_k)\succeq  \epsilon_0 I$$ for all vertices $\tx_k\in \T_\nu$ of all simplices $\T_\nu\in\cT^\cC_K$.
% and $\lambda_{max}$ denotes the maximal eigenvalue of the symmetric matrix $M(\tx_k)$.
% (i.e. the matrix is positive definite)

\item {\bf Contraction of the metric}

$$M(\tx_k)D_xf(\tx_k)+D_xf(\tx_k)^TM(\tx_k)+(w_{ij}^\nu \cdot \tf(\tx_k))_{i,j=1,\ldots,n}+(E_{\nu}+1)I\preceq  0$$ for all simplices $\T_\nu=\co(\tx_0,\ldots,\tx_{n+1})\in \cT^\cC_K$ and all of its vertices $k=0,\ldots,n+1$.
Here, $\tf(\tx)=\left(\begin{array}{c}1\\ f(\tx)\end{array}\right)$ and $(w_{ij}^\nu \cdot \tf(\tx_k))_{i,j=1,\ldots,n}$ denotes the symmetric $(n\times n)$ matrix with entries $w_{ij}^\nu \cdot \tf(\tx_k)$, where $w^{\nu}_{ij}$ was defined in (\ref{defw}) and is the same vector for all vertices in one simplex
%,$\lambda_{max}$ denotes the maximal eigenvalue of the symmetric matrix
and
\begin{eqnarray}
E_{\nu}&=&\left\{\begin{array}{ll}
h_\nu n B_\nu [\sqrt{n+1}h_\nu  D_\nu+2n(n+1)C_\nu]
&\mbox{ if } f\in C^2\\
h_\nu^2n
[\sqrt{n+1}(1+4n)B_\nu D_\nu +2n(n+1)B_{3,\nu}C_\nu]&\mbox{ if } f\in C^3
\end{array}\right.
\end{eqnarray}
 \end{enumerate}

\begin{remark}
In Constraint 3. we have claimed that the gradient of the affine function $M_{ij}\big|_{\T_\nu}$, i.e. $\nabla_{\tx} M_{ij}\big|_{\T_\nu}=w_{ij}^\nu$, is given by the expression in (\ref{defw}). For a proof of this fact
 and, moreover, that the definition  is independent of the choice of the vertex $\tx_0$, see \cite[Remark 2.9]{HAFSTEIN-revised-cpa}.
\end{remark}

\begin{remark}
The constraints above are easily transferred into the standard form $\sum_{i=1}^m F_i y_i-F_0\succeq 0$ in the following way: Denote by $y_1,\ldots,y_{n(n+1)/2}$ the matrix elements $M_{ij}(\tx_1)$, $1\le i\le j\le n$, by the following $\frac{n(n+1)}{2}$ elements $y_i$ the matrix elements of $M_{ij}(\tx_2)$ etc. and finish the vector $y$ by the $C_\nu$ and $D_\nu$. This results in $m=2s+\frac{n(n+1)}{2}v$ as above.

Now Constraints 2. and 4. are expressed in $(n+2)s$, $v$ blocks of size $n$ each, respectively, in the matrices $F_i$. Constraint 3. is expressed in $2\cdot \frac{n(n+1)}{2}$ conditions (for each $i$, $j$) for each of the $n+1$ entries of the vector $w_{ij}^\nu$. This needs to be considered for each simplex, so that we have $n(n+1)^2s$ blocks of size $1$.

Finally, Constraint 5. is expressed in $(n+2)s$ blocks of size $n$, since for every simplex each of its vertices needs to be considered. Note that $E_\nu$ depends linearly on $C_\nu$ and $D_\nu$  and $w_{ij}^\nu$ depends linearly on $M_{ij}(\tx_k)$.

The size of the matrices $F_i$ is thus a block-diagonal structure with $n(n+1)^2s$ blocks of size $1$ and $v+2(n+2)s$ blocks of size $n$.
\end{remark}

\begin{remark}\label{con_new}
Note that Constraint 2. implies  that $\max_{k=0,\ldots,n+1} |M_{il}(\tx_k)|\le \|M(\tx_k)\|_{max}\le\|M(\tx_k)\|_2\le  C_\nu$ since $M(\tx_k)$ is positive definite.
Note also that Constraint 3. implies that $\|w_{ij}^\nu\|_1\le D_\nu$. Moreover, Constraint 5. is equivalent to
$$\lambda_{max}\left(M(\tx_k)D_xf(\tx_k)+D_xf(\tx_k)^TM(\tx_k)+(w_{ij}^\nu \cdot \tf(\tx_k))_{i,j=1,\ldots,n}\right)+E_{\nu}\le  -1$$
where $\lambda_{max}$ denotes the maximal eigenvalue.
\end{remark}

\subsection{Feasible solution is CPA contraction metric}

A solution of the semidefinite optimization problem returns a matrix $M_{ij}(\tx_k)$ at each vertex $\tx_k$ of the triangulation for $1\le i\le j\le n$. We define the CPA metric by affine interpolation on each simplex.

\begin{definition}\label{CPA-def}
Fix $\cC$ and a triangulation $\cT^\cC_K$ with $\cD_K=\bigcup_{\T_\nu\in \cT^\cC_K}\T_\nu$.
Let $M_{ij}(\tx_k)$  be defined by a feasible solution of the semidefinite optimization problem. Let $(t,x)=\tx\in \T_\nu=\co(\tx_0,\ldots,\tx_{n+1})$ such that  $\tx=\sum_{k=0}^{n+1}\lambda_k\tx_k$ with $\lambda_k\in [0,1]$ and $\sum_{k=0}^{n+1}\lambda_k=1$.
Then define
$$M(\tx)=\sum_{k=0}^{n+1}\lambda_kM(\tx_k).$$
\end{definition}

\begin{lemma}\label{le1}
The matrix $M(\tx)$ as in Definition \ref{CPA-def} is symmetric and positive definite for all $\tx\in\cD_K$. The function $M(t,x)$ is periodic in $t$ with period $T$.
\end{lemma}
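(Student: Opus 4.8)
The statement has three parts: symmetry, positive definiteness, and $T$-periodicity in $t$, and each follows almost immediately from Definition \ref{CPA-def} together with the feasibility constraints. For symmetry, note that at each vertex $\tx_k$ the matrix $M(\tx_k)$ is symmetric by construction (Constraint 1 fixes $M_{ij}(\tx_k)$ for $i\le j$ and we set $M_{ji}(\tx_k):=M_{ij}(\tx_k)$), so any convex combination $\sum_k\lambda_k M(\tx_k)$ is again symmetric; since $M(\tx)$ is defined as exactly such a convex combination on each simplex, $M(\tx)\in\mathbb S^n$ for all $\tx\in\cD_K$.

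For positive definiteness, Constraint 4 gives $M(\tx_k)\succeq\epsilon_0 I$ at every vertex. If $\tx\in\T_\nu=\co(\tx_0,\ldots,\tx_{n+1})$ with $\tx=\sum_{k=0}^{n+1}\lambda_k\tx_k$, $\lambda_k\ge 0$, $\sum_k\lambda_k=1$, then for any $v\in\mathbb R^n$ with $\|v\|=1$ we have
$$
v^T M(\tx) v=\sum_{k=0}^{n+1}\lambda_k\, v^T M(\tx_k) v\ge\sum_{k=0}^{n+1}\lambda_k\,\epsilon_0=\epsilon_0>0,
$$
so $M(\tx)\succeq\epsilon_0 I$ on all of $\T_\nu$, and since $\cD_K$ is the union of the simplices of $\cT^\cC_K$ this holds for all $\tx\in\cD_K$. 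One subtlety worth a sentence: if $\tx$ lies on a shared face of two simplices $\T_\nu$ and $\T_\mu$, the two affine formulas agree there because they share the common vertices (Lemma \ref{simplemma}) and any point of the common face is a convex combination of exactly those common vertices, so $M(\tx)$ is well defined.

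For periodicity, the set $\cD_K$ is a triangulated subset of the cylinder $S_T^1\times\mathbb R^n$, and Constraint 1 (Periodicity) enforces $M_{ij}(0,x_k)=M_{ij}(T,x_k)$ at all boundary vertices, with the triangulation constructed so that $(0,x_k)$ is a vertex iff $(T,x_k)$ is (the scaling matrix $S$ has a $1$ in the $t$-direction and $\rho=2^{-K}T$, which makes the simplicial complex compatible with the identification $t\sim t+T$). Hence the CPA interpolant glues consistently across the seam $t=0\equiv t=T$ and descends to a well-defined continuous function on the cylinder, which is exactly the assertion that $M(t,x)$ is $T$-periodic in $t$. None of this is hard; the only thing requiring care is the gluing/well-definedness argument at shared faces, which I would handle by the remark above rather than a separate computation.
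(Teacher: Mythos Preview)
Your proof is correct and follows essentially the same approach as the paper: symmetry and periodicity are handled identically, and for positive definiteness the paper phrases the convex-combination argument via concavity of $\lambda_{\min}$ while you use the quadratic form $v^TM(\tx)v$ directly, which is the same content. Your extra sentence on well-definedness at shared faces (via Lemma~\ref{simplemma}) is a useful addition the paper leaves implicit; one minor slip is that the symmetry convention $M_{ji}(\tx_k):=M_{ij}(\tx_k)$ is introduced in Constraint~2, not Constraint~1.
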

\begin{proof}
The symmetry follows directly from the symmetry of $M(\tx_k)$:
$$M_{ij}(\tx)=\sum_{k=0}^{n+1}\lambda_kM_{ij}(\tx_k)=\sum_{k=0}^{n+1}\lambda_kM_{ji}(\tx_k)
=M_{ji}(\tx).$$

The positive definiteness also follows from the positive definiteness of
$M(\tx_k)$, using that the minimal eigenvalue $\lambda_{\min}$ is a concave function. Indeed, consider $\tx\in \T_\nu=\co(\tx_0,\ldots,\tx_{n+1})$.  Note that $\lambda_{min}(M(\tx_k))\ge \epsilon_0$  for all $k=0,\ldots,n+1$ due to Constraint 4. Thus,
\begin{eqnarray*}
\lambda_{min}(M(\tx))&=&\lambda_{min}\left(\sum_{k=0}^{n+1}\lambda_kM(\tx_k)\right)\\
&\ge &\sum_{k=0}^{n+1}\lambda_k\lambda_{min}(M(\tx_k))\\
&\ge&\epsilon_0\sum_{k=0}^{n+1}\lambda_k\\
&=&\epsilon_0>0.
\end{eqnarray*}
The $T$-periodicity follows directly from the definition and the triangulation:
$$M(0,x)=\sum_{k=0}^{n+1}\lambda_kM(0,x_k)=\sum_{k=0}^{n+1}\lambda_kM(T,x_k)
=M(T,x).$$
\end{proof}

We will now relate $M'(\tx)$ to $M'(\tx_k)$, as well as $M(\tx)D_xf(\tx)$ to $M(\tx_k)D_xf(\tx_k)$. For the proof we will need the following auxiliary result, see \cite[Proposition 4.1 and Corollary 4.3]{robertlarssiggi2}.

\begin{lemma}\label{help}
Let $f\in C^2(\mathbb R^{n+1},\mathbb R^n)$ and $\tx\in \co(\tx_0,\ldots,\tx_{n+1})=\T_\nu$ with  $\tx=\sum_{k=0}^{n+1}\lambda_k\tx_k$, $\lambda_k\in [0,1]$ and $\sum_{k=0}^{n+1}\lambda_k=1$.

Then  for all $l\in\{1,\ldots,n\}$ we have
 $$\left|f_l(\tx)-\sum_{k=0}^{n+1}\lambda_k f_l(\tx_k)\right|\le
 \max_{\tx\in \T_\nu}\|H_{f_l}(\tx)\|_2 h_\nu^2,$$
 especially
$$\left\|f(\tx)-\sum_{k=0}^{n+1}\lambda_k f(\tx_k)\right\|_\infty\le
 (n+1) B_\nu h_\nu^2,$$
where $h_\nu=\diam(\T_\nu)$, $B_\nu=\max_{\tx\in \T_\nu, i,j\in \{0,\ldots,n\}}\left\|\frac{\partial^2 f(\tx)}{\partial x_i\partial x_j}\right\|_\infty$,
$x_0:=t$ and  $H_{f_l}(\tx):=\left(\frac{\partial^2 f_l(\tx)}{\partial x_i\partial x_j}\right)_{i,j=0,\ldots,n}$ denotes the Hessian of $f_l$.
\end{lemma}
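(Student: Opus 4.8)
The statement to prove is Lemma \ref{help}: a quantitative bound on how much an affine interpolation of $f$ (and of each component $f_l$) at the vertices of a simplex deviates from $f$ itself, controlled by the Hessian and the diameter $h_\nu$.

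The plan is to reduce everything to a one-component statement and then to a Taylor estimate. First I would fix $l\in\{1,\ldots,n\}$ and work with the scalar function $f_l\in C^2(\mathbb R^{n+1},\mathbb R)$. Write $\tx=\sum_{k=0}^{n+1}\lambda_k\tx_k$ with $\lambda_k\ge 0$ and $\sum_k\lambda_k=1$. The natural device is a second-order Taylor expansion of $f_l$ about the point $\tx$ itself: for each vertex $\tx_k$,
$$
f_l(\tx_k)=f_l(\tx)+\nabla f_l(\tx)\cdot(\tx_k-\tx)+\tfrac12(\tx_k-\tx)^T H_{f_l}(\xi_k)(\tx_k-\tx)
$$
for some $\xi_k$ on the segment between $\tx$ and $\tx_k$, hence $\xi_k\in\T_\nu$ by convexity. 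Now multiply by $\lambda_k$ and sum over $k$. The constant terms give $f_l(\tx)$ (since $\sum_k\lambda_k=1$); the linear terms give $\nabla f_l(\tx)\cdot\big(\sum_k\lambda_k\tx_k-\tx\big)=0$ by the barycentric identity; what remains is
$$
\sum_{k=0}^{n+1}\lambda_k f_l(\tx_k)-f_l(\tx)=\tfrac12\sum_{k=0}^{n+1}\lambda_k(\tx_k-\tx)^T H_{f_l}(\xi_k)(\tx_k-\tx).
$$

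Next I would estimate the right-hand side. Each quadratic form is bounded by $\|H_{f_l}(\xi_k)\|_2\,\|\tx_k-\tx\|_2^2\le \max_{\ty\in\T_\nu}\|H_{f_l}(\ty)\|_2\cdot h_\nu^2$, since both $\tx_k,\tx\in\T_\nu$ forces $\|\tx_k-\tx\|_2\le\diam(\T_\nu)=h_\nu$. Using $\sum_k\lambda_k=1$ and $\lambda_k\ge0$, the convex combination of these bounds is again at most $\max_{\ty\in\T_\nu}\|H_{f_l}(\ty)\|_2\, h_\nu^2$; a factor $\tfrac12$ could be kept but is dropped for a cleaner (still valid) bound. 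This gives the first claimed inequality. For the $\|\cdot\|_\infty$ version over all components, I would bound the operator $2$-norm of the Hessian by its Frobenius norm (or simply by $(n+1)$ times the max entry): $\|H_{f_l}(\ty)\|_2\le \|H_{f_l}(\ty)\|_F\le (n+1)\max_{i,j}|\partial_i\partial_j f_l(\ty)|\le (n+1)B_\nu$, where the last step uses the definition of $B_\nu$ which takes the max over all components $l$ as well. Taking the max over $l\in\{1,\ldots,n\}$ of the componentwise bounds yields $\big\|f(\tx)-\sum_k\lambda_k f(\tx_k)\big\|_\infty\le (n+1)B_\nu h_\nu^2$, as required.

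There is no serious obstacle here; the lemma is standard multivariate interpolation error analysis and the excerpt explicitly cites \cite[Proposition 4.1 and Corollary 4.3]{robertlarssiggi2} for it. The only points requiring a little care are (i) choosing to expand about $\tx$ rather than about a vertex, so that the linear term vanishes by the barycentric coordinate identity $\sum_k\lambda_k(\tx_k-\tx)=0$; (ii) verifying that all the intermediate points $\xi_k$ produced by Taylor's theorem with Lagrange-type remainder lie in $\T_\nu$, which is immediate from convexity; and (iii) being slightly loose in the norm conversion for the Hessian so that the stated constant $(n+1)B_\nu$ (and not a sharper but messier constant) comes out. I would therefore present the short Taylor argument and simply remark that the constants follow from the norm inequalities $\|\cdot\|_2\le\|\cdot\|_F\le (n+1)\|\cdot\|_{\max}$ applied to the Hessian.
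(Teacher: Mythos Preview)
Your argument is correct and is essentially the standard proof of this interpolation estimate. The paper itself does not give a proof of Lemma~\ref{help}; it simply cites \cite[Proposition~4.1 and Corollary~4.3]{robertlarssiggi2}, so there is nothing to compare against beyond noting that your Taylor-at-$\tx$ argument with the barycentric cancellation $\sum_k\lambda_k(\tx_k-\tx)=0$ is exactly the kind of computation that reference contains. Your handling of the constants (dropping the factor $\tfrac12$, and using $\|H\|_2\le\|H\|_F\le(n+1)\|H\|_{\max}$ for the $(n+1)\times(n+1)$ Hessian) is clean and matches the stated bounds.
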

%
%\begin{remark}\label{help2}
%Denoting the Hessian $H(\tx):=(\frac{\partial^2 f(\tx)}{\partial x_i\partial x_j})_{ij}$, we have
% $$\left\|f(\tx)-\sum_{k=0}^{n+1}\lambda_k f(\tx_k)\right\|_\infty\le
% 2\max_{\tx\in T_\nu}\|H(\tx)\|_2 h_\nu^2.$$

%Note that the estimate in  the lemma follows from the above one through $\|H(\tx)\|_2\le (n+1)\|H(\tx)\|_{max}$. Moreover, we obtain, using $\|H(\tx)\|_2\le \sqrt{n+1}\|H(\tx)\|_1$ and $\|H(\tx)\|_2\le \sqrt{n+1}\|H(\tx)\|_\infty$  where $\|H(\tx)\|_1=\max_{j=0,\ldots,n}\sum_{i=0}^n |h_{ij}|$ and $\|H(\tx)\|_\infty=\max_{i=0,\ldots,n}\sum_{j=0}^n |h_{ij}|$
% $$\left\|f(\tx)-\sum_{k=0}^{n+1}\lambda_k f(\tx_k)\right\|_\infty\le
% 2\sqrt{n+1} \min(\max_{\tx\in t_\nu}\|H(\tx)\|_1,\max_{\tx\in t_\nu}\|H(\tx)\|_\infty)) h_\nu^2$$
%\end{remark}

In the following we restrict ourselves to one simplex $\T_\nu
\in \cT_K^\cC$.
First we need to define $M'(\tx)$ (the orbital derivative) for a general point in the simplex $\T_\nu$. Note that for all points of a simplex the vector $\nabla_{\tx} M_{ij}(\tx)$ is the same, but the contribution of $\tf(\tx)$ is different.

\begin{definition}\label{basic}Let $M(\tx)$ be as in Definition \ref{CPA-def}.
Fix a point $\tx\in\cD^\circ_K$ and a simplex $\T_\nu=\co(\tx_0,\ldots,\tx_{n+1})\in \cT^\cC_K$ such that $\tx+\theta\tf(\tx)\in \T_\nu$ for all $\theta\in [0,\theta^*]$, cf. Lemma \ref{le4.9}. Then $\tx=\sum_{k=0}^{n+1}\lambda_k\tx_k$ with $\lambda_k\in [0,1]$, $\sum_{k=0}^{n+1}\lambda_k=1$ and
$$(M_{ij})'_+(\tx)=M'_{ij}\big|_{\T_\nu}(\tx)=\nabla_{\tx} M_{ij}\big|_{\T_\nu}(\tx)\cdot \tf(\tx)=w_{ij}^\nu \cdot \tf(\tx).$$
 Note that $M_{ij}\big|_{\T_\nu}$ is an affine function and its gradient $w^{\nu}_{ij}$ was defined in Constraint 3. and is the same vector for all points $\tx\in \T_\nu$. Note that for vertices $\tx=\tx_k$ this term appears in Constraint 5. %In the following we drop the notation $\big|_{\T_\nu}$.
\end{definition}

\begin{lemma}\label{4.12}
Let $f\in C^2$ and
let $M(\tx)$ be as in Definition \ref{CPA-def}.
Fix a point $\tx\in\cD^\circ_K$ and a corresponding simplex $\T_\nu\in \cT_K^\cC$ as in Definition \ref{basic}.

Then we have the following estimates
for all $\tx\in\T_\nu$
\begin{eqnarray*}
\left|(M_{ij})_+'(\tx)-\sum_{k=0}^{n+1}\lambda_k w^\nu_{ij}\cdot \tf(\tx_k)\right|
&\le&\sqrt{n+1}
B_\nu D_\nu h_\nu^2\\
\left|
(M(\tx)D_xf(\tx))_{ij}-\sum_{k=0}^{n+1}\lambda_k (M(\tx_k)D_xf(\tx_k))_{ij}\right|&\le&
n(n+1)B_\nu
 C_\nu h_\nu\\
\left|
(D_xf(\tx)^TM(\tx))_{ij}-\sum_{k=0}^{n+1}\lambda_k (D_xf(\tx_k)^TM(\tx_k))_{ij}\right|&\le&n(n+1)B_\nu
C_\nu h_\nu,
\end{eqnarray*}
where $B_\nu=\max_{\tx\in \T_\nu, i,j\in \{0,\ldots,n\}}\left\|\frac{\partial^2 f(\tx)}{\partial x_i\partial x_j}\right\|_\infty$, $x_0:=t$ and  $h_\nu=\diam(\T_\nu)$.

Altogether we have % for the CPA function $M$
%(note that $\|M\|_{max}:=\max_{i,j=1,\ldots,n}|M_{ij}|$ is the maximal entry):
$$\begin{array}{l}
\bigg\|
\left(M(\tx)D_xf(\tx)+D_xf(\tx)^TM(\tx)+M'_+(\tx)\right)
\\
\hspace{0.2cm}
-
\sum_{k=0}^{n+1}\lambda_k\left(M(\tx_k)D_xf(\tx_k)+D_xf(\tx_k)^TM(\tx_k)+(w^\nu_{ij}\cdot \tf(\tx_k))_{i,j=1,\ldots,n}\right)\bigg\|_{max}
\end{array}$$

\vspace{-.5cm}
\begin{eqnarray*}
&\le& h_\nu B_\nu (
\sqrt{n+1} h_\nu D_\nu+2n(n+1)  C_\nu)=\frac{E_\nu}{n}.
\end{eqnarray*}

If $f\in C^3$ we obtain in addition the estimates

$$
\left|
(M(\tx)D_xf(\tx))_{ij}-\sum_{k=0}^{n+1}\lambda_k (M(\tx_k)D_xf(\tx_k))_{ij}\right|
$$
\vspace{-.5cm}

$$\hspace{1.5cm}\le
n(2\sqrt{n+1}B_\nu D_\nu +(n+1)B_{3,\nu}C_\nu)h_\nu^2,
$$

$$
\left|
(D_xf(\tx)^TM(\tx))_{ij}-\sum_{k=0}^{n+1}\lambda_k (D_xf(\tx_k)^TM(\tx_k))_{ij}\right|
$$
\vspace{-.5cm}

$$\hspace{1.5cm}\le
n(2\sqrt{n+1}B_\nu D_\nu +(n+1)B_{3,\nu}C_\nu)h_\nu^2,
$$
where $B_{3,\nu}=\max_{\tx\in \T_\nu, i,j,k\in \{0,\ldots,n\}}\left\|\frac{\partial^3 f(\tx)}{\partial x_i\partial x_j\partial x_k}\right\|_\infty$, $x_0:=t$.
Altogether we have %for the CPA function $M$
$$\begin{array}{l}
\bigg\|
\left(M(\tx)D_xf(\tx)+D_xf(\tx)^TM(\tx)+M'_+(\tx)\right)
\\
\hspace{0.2cm}
-
\sum_{k=0}^{n+1}\lambda_k\left(M(\tx_k)D_xf(\tx_k)+D_xf(\tx_k)^TM(\tx_k)+(w^\nu_{ij}\cdot \tf(\tx_k))_{i,j=1,\ldots,n}\right)\bigg\|_{max}
\end{array}$$

\vspace{-.5cm}
\begin{eqnarray*}
&\le& h_\nu^2 [\sqrt{n+1}(1+4n)B_\nu D_\nu+2n(n+1)B_{3,\nu}C_\nu]=\frac{E_\nu}{n}.
\end{eqnarray*}
\end{lemma}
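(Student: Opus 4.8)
The plan is to prove the three elementary estimates first and then assemble them via the triangle inequality. For the orbital-derivative term, observe that $(M_{ij})'_+(\tx) = w^\nu_{ij}\cdot\tf(\tx)$ is affine-linear in the "derivative" direction but that $\tf(\tx)$ varies over the simplex, whereas $\sum_k\lambda_k w^\nu_{ij}\cdot\tf(\tx_k) = w^\nu_{ij}\cdot\sum_k\lambda_k\tf(\tx_k)$; hence the difference equals $w^\nu_{ij}\cdot(\tf(\tx)-\sum_k\lambda_k\tf(\tx_k))$. I would bound this by $\|w^\nu_{ij}\|_1$ times the $\ell^\infty$-distance of $\tf(\tx)$ from the affine interpolant, which by Lemma~\ref{help} (applied to $\tf$, noting the first component $1$ contributes nothing to the error) is at most $(n+1)B_\nu h_\nu^2$. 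Using Constraint~3 in the form $\|w^\nu_{ij}\|_1\le D_\nu$ (Remark~\ref{con_new}), and being slightly more careful with the per-component Hessian bound from Lemma~\ref{help} to save a factor, yields the claimed $\sqrt{n+1}\,B_\nu D_\nu h_\nu^2$.

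For the products $M(\tx)D_xf(\tx)$, I would write $M(\tx)D_xf(\tx) - \sum_k\lambda_k M(\tx_k)D_xf(\tx_k)$ and telescope by inserting and subtracting $\sum_k\lambda_k M(\tx_k)D_xf(\tx)$: the difference splits into $(M(\tx)-\sum_k\lambda_k M(\tx_k))D_xf(\tx) = 0$ — since $M$ is CPA, the first factor vanishes identically on $\T_\nu$ — plus $\sum_k\lambda_k M(\tx_k)(D_xf(\tx)-D_xf(\tx_k))$. So only the second sum survives, and I bound a single matrix entry by $\sum_k\lambda_k \|M(\tx_k)\|_{max}\cdot n\cdot \max_l\|(D_xf(\tx)-D_xf(\tx_k))\|_{max}$; the first factor is at most $C_\nu$ by Constraint~2 (Remark~\ref{con_new}), and the Lipschitz estimate on $D_xf$ from the $C^2$ bound on $f$ gives $\|D_xf(\tx)-D_xf(\tx_k)\|_{max}\le nB_\nu h_\nu$. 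Collecting constants gives $n(n+1)B_\nu C_\nu h_\nu$; the transpose version is identical. The $C^3$ refinement is the same argument but with a second-order Taylor expansion of $D_xf$ around $\tx_k$ instead of a Lipschitz bound, so that the $h_\nu$ becomes $h_\nu^2$ at the cost of an extra $B_{3,\nu}$ term and an additional first-order error controlled by $B_\nu D_\nu$; here one must track where the $(1+4n)$ coefficient comes from by carefully counting the index sums in $(M D_xf)_{ij} = \sum_l M_{il}(D_xf)_{lj}$.

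Finally I assemble: since $\|\,\cdot\,\|_{max}$ of a sum is at most the sum of the $\|\,\cdot\,\|_{max}$, the total error is bounded by $\sqrt{n+1}B_\nu D_\nu h_\nu^2 + 2n(n+1)B_\nu C_\nu h_\nu$. I then factor out $h_\nu B_\nu$ — legitimate since $h_\nu\le S^*2^{-K}T$ so $h_\nu^2 = h_\nu\cdot h_\nu$ — to obtain $h_\nu B_\nu(\sqrt{n+1}h_\nu D_\nu + 2n(n+1)C_\nu)$, which by definition of $E_\nu$ in the $C^2$ case equals $E_\nu/n$; the $C^3$ case is analogous with the refined per-term estimates. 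The main obstacle I anticipate is not any single estimate but the bookkeeping: getting the dimensional constants ($\sqrt{n+1}$ versus $n+1$, the factor $n$ from matrix-entry bounds on products, the $(1+4n)$ in the $C^3$ case) to match the stated $E_\nu$ exactly, which requires using the sharper per-component form of Lemma~\ref{help} rather than the crude $\ell^\infty$ bound in places, and carefully distinguishing the error in $D_xf$ (first derivatives, so governed by $B_\nu$, resp. $B_{3,\nu}$) from the interpolation error in $M$ itself (which vanishes) and from the error in $\tf$ (governed by $B_\nu$).
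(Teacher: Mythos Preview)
Your treatment of the $C^2$ case matches the paper's proof essentially line for line: the identity $(M_{ij})'_+(\tx)-\sum_k\lambda_k w^\nu_{ij}\cdot\tf(\tx_k)=w^\nu_{ij}\cdot\bigl(\tf(\tx)-\sum_k\lambda_k\tf(\tx_k)\bigr)$ followed by Lemma~\ref{help}, and the telescope on $M(\tx)D_xf(\tx)$ using that $M(\tx)=\sum_k\lambda_k M(\tx_k)$ and then a mean-value bound on $D_xf(\tx)-D_xf(\tx_k)$, are precisely what the paper does (there phrased as a product estimate for an affine $g$ times a $C^1$ function $h$). One small slip: the gradient of $(D_xf)_{lj}$ has $n+1$ components (there is a $t$-derivative), so the Lipschitz bound is $(n+1)B_\nu h_\nu$, not $nB_\nu h_\nu$; this is what produces the factor $n(n+1)$.

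For the $C^3$ refinement your route differs from the paper's. You propose to keep the telescope and upgrade the mean-value bound on $D_xf(\tx)-D_xf(\tx_k)$ to a second-order Taylor expansion. This can be made to work, but the step you pass over --- why the resulting first-order piece $\sum_k\lambda_k M_{il}(\tx_k)\,\nabla(D_xf)_{lj}(\tx)\cdot(\tx_k-\tx)$ is $O(h_\nu^2)$ rather than $O(h_\nu)$ --- needs an extra manipulation: write $M_{il}(\tx_k)=M_{il}(\tx)+w^\nu_{il}\cdot(\tx_k-\tx)$, use $\sum_k\lambda_k(\tx_k-\tx)=0$ to kill the leading part, and bound the remainder by $\|w^\nu_{il}\|_1\,(n+1)B_\nu\,h_\nu^2$. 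The paper instead applies Lemma~\ref{help} directly to the product $g\cdot h=M_{il}\cdot(D_xf)_{lj}$ and computes the Hessian of this product: since $g$ is affine on $\T_\nu$ its second derivatives vanish, leaving $\partial^2_{km}(g\cdot h)=(w^\nu_{il})_m\,\partial_k h+(w^\nu_{il})_k\,\partial_m h+M_{il}\,\partial^2_{km}h$, which is then bounded in $\|\cdot\|_2$ by splitting into three matrices and using $\|\cdot\|_2\le\sqrt{n+1}\,\|\cdot\|_1$, $\|\cdot\|_2\le\sqrt{n+1}\,\|\cdot\|_\infty$, and $\|\cdot\|_2\le(n+1)\|\cdot\|_{\max}$ respectively. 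This makes the constants $2\sqrt{n+1}B_\nu D_\nu$ and $(n+1)B_{3,\nu}C_\nu$ fall out directly and avoids the separate treatment of the first-order term; your approach arrives at the same place with one more algebraic step.
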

\begin{proof}
\noindent
{\bf Step 1: $M'$}

Fix one simplex $\T_\nu\in \cT^\cC_K$ of the triangulation and let $\tx\in \T_\nu$. By definition of $M_{ij}(\tx)$ as a CPA function, interpolating $M_{ij}(\tx_k)$ at the vertices, we have $\nabla_{\tx} M_{ij}(\tx)=w^{\nu}_{ij}$ for all $\tx\in \T_\nu$, where $M$ is restricted to the simplex, cf. Definition \ref{basic}. Thus, letting $\tx=\sum_{k=0}^{n+1}\lambda_k\tx_k$ with $\lambda_k\in [0,1]$ and $\sum_{k=0}^{n+1}\lambda_k=1$
\begin{eqnarray*}
(M_{ij})'_+(\tx)&=&\nabla_{\tx} M_{ij}\big|_{\T_\nu}(\tx)\cdot \tf(\tx)\\
&=&w^{\nu}_{ij}\cdot \tf(\tx)\\
&=&w^{\nu}_{ij}\cdot \left(\sum_{k=0}^{n+1}\lambda_k \tf(\tx_k)\right)
+w^{\nu}_{ij}\cdot \left(\tf(\tx)-\sum_{k=0}^{n+1}\lambda_k \tf(\tx_k)\right).
\end{eqnarray*}
Hence
$$
\left|(M_{ij})'_+(\tx)
-\sum_{k=0}^{n+1}\lambda_k w^{\nu}_{ij}\cdot  \tf(\tx_k)\right|\le
\|w^{\nu}_{ij}\|_1\left\| f(\tx)-\sum_{k=0}^{n+1}\lambda_k f(\tx_k)\right\|_\infty
$$
since $\left\| \tf(\tx)-\sum_{k=0}^{n+1}\lambda_k \tf(\tx_k))\right\|_\infty=\left\| f(\tx)-\sum_{k=0}^{n+1}\lambda_k f(\tx_k))\right\|_\infty$ as the $t$-component is $0$.

Now we use Lemma \ref{help} for the $C^2$ function $f$, establishing that
$\| f(\tx)-\sum_{k=0}^{n+1}\lambda_k f(\tx_k)\|_\infty\le (n+1) B_\nu h_\nu^2$.
Thus,\begin{eqnarray*}
\left|(M_{ij})'_+(\tx)
-\sum_{k=0}^{n+1}\lambda_k w^\nu_{ij}\cdot \tf(\tx_k)\right|
&\le&\|w^{\nu}_{ij}\|_1 (n+1)B_{\nu} h_{\nu}^2.
\end{eqnarray*}
Using that  $\|w^{\nu}_{ij}\|_1\le D_\nu$ holds from Remark \ref{con_new}, we obtain
\begin{eqnarray*}
\left|(M_{ij})'_+(\tx)
-\sum_{k=0}^{n+1}\lambda_k  w^\nu_{ij}\cdot \tf(\tx_k)\right|
&\le&( n+1)D_\nu  B_\nu h_\nu^2.
\end{eqnarray*}

\noindent
{\bf Step 2: $MD_xf$}

We consider $(M(\tx)D_xf(\tx))_{ij}=\sum_{l=1}^n M_{il}(\tx)(D_xf(\tx))_{lj}$.
We first consider two scalar-valued functions $g$ and $h$, where $g(\tx)=\sum_{k=0}^{n+1}\lambda g(\tx_k)$ and $h$ is $C^1$ in $\tx$. We have, using Taylor expansion for $h$ at $\tx_k$, i.e. $h(\tx)=h(\tx_k)+\nabla h(\tx^*)(\tx-\tx_k)$, where
$\tx^*$ lies on the straight line between $\tx_k$ and $\tx$,
\begin{eqnarray*}
g(\tx)h(\tx)&=&\sum_{k=0}^{n+1}\lambda_k g(\tx_k)h(\tx)\\
&=&\sum_{k=0}^{n+1}\lambda_k g(\tx_k)[h(\tx_k)+\nabla_{\tx}h(\tx^*)(\tx-\tx_k)]
\end{eqnarray*}
\begin{eqnarray*}
\left|g(\tx)h(\tx)-
\sum_{k=0}^{n+1}\lambda_k g(\tx_k)h(\tx_k)\right|
&\le& \max_{\tx^* \in \T_\nu} \|\nabla_{\tx} h(\tx^*)\|_1h_\nu\max_{k=0,\ldots,n+1} |g(\tx_k)|
\end{eqnarray*}
where  $\|\tx-\tx_k\|_\infty\le\|\tx-\tx_k\|_2\le h_\nu$.

Applying this to $g(\tx)= M_{il}(\tx)$ and $h(\tx)=(D_xf(\tx))_{lj}$ we obtain
with $\max_{k=0,\ldots,n+1} |M_{il}(\tx_k)|\le C_\nu$ from Remark \ref{con_new} and
 $\|\nabla_{\tx} (D_xf(\tx^*))_{lj}\|_1\le (n+1) \max_{\tx\in \T_\nu, i,k\in \{0,\ldots,n\},k\not=0}\left\|\frac{\partial^2 f(\tx)}{\partial x_i\partial x_k}\right\|_\infty\le (n+1)B_\nu$ for all $l,j\in\{1,\ldots,n\}$, using that $\tx,\tx_k\in \T_\nu$ and $\T_\nu$ is a convex set. Thus,
\begin{eqnarray*}
\left|
\sum_{l=1}^nM_{il}(\tx)(D_xf(\tx))_{lj}
-\sum_{l=1}^n\sum_{k=0}^{n+1}\lambda_k M_{il}(\tx_k)(D_xf(\tx_k))_{lj}\right|
&\le&n(n+1)B_\nu
h_\nu C_\nu
\end{eqnarray*}
Hence,
\begin{eqnarray*}
\left|(M(\tx)D_xf(\tx))_{ij}-\sum_{k=0}^{n+1}\lambda_k (M(\tx_k)D_xf(\tx_k))_{ij}\right|
&\le&n(n+1)B_\nu
h_\nu C_\nu.
\end{eqnarray*}
A similar estimate holds for  $D_xf(\tx)^TM(\tx)$.

\noindent
{\bf Step 2': $MD_xf$}

If $f\in C^3$, then we can derive an estimate for this term which establishes order $h_\nu^2$.   We consider two scalar-valued functions $g$ and $h$, where $g(\tx)=\sum_{k=0}^{n+1}\lambda_k g(\tx_k)$ for $\tx=\sum_{k=0}^{n+1}\lambda_k\tx_k$ and $h(\tx)$ is $C^2$ in $\tx$. We apply Lemma \ref{help} to $(g\cdot h)$, yielding
\begin{eqnarray}
\left|g(\tx)h(\tx)-\sum_{k=0}^{n+1}\lambda_k g(\tx_k)h(\tx_k)\right|
&\le&\max_{\tx\in \T_\nu}\|H(\tx)\|_2h_\nu^2\label{help3}
\end{eqnarray}
where the matrix $H(\tx)$ is defined by $(H(\tx))_{km}=\frac{\partial^2 (g\cdot h)(\tx)}{\partial x_k\partial x_m}$.
Note that
$\frac{\partial }{\partial x_m}(g\cdot h)=\frac{\partial g}{\partial x_m} h+g
\frac{\partial h}{\partial x_m}$ and
$$\frac{\partial^2 }{\partial x_k\partial x_m}(g\cdot h)=\frac{\partial^2 g}{\partial x_k\partial x_m} h+\frac{\partial g}{\partial x_m}\frac{\partial h}{\partial x_k}+\frac{\partial g}{\partial x_k}\frac{\partial h}{\partial x_m}+g\frac{\partial^2 h}{\partial x_k\partial x_m}.$$

Applying this to $g(\tx)= M_{il}(\tx)$, we observe that, since  $g(\tx)=M_{il}(\tx)$ is affine on the simplex, $\frac{\partial g}{\partial x_m}(\tx)=(w_{il}^\nu)_m$ and
$\frac{\partial^2 g}{\partial x_k\partial x_m} (\tx)=0$ for all $\tx\in \T_\nu$.
Hence,
$$\frac{\partial^2 }{\partial x_k\partial x_m}(g\cdot h)(\tx)=(w_{il}^\nu)_m\frac{\partial h(\tx)}{\partial x_k}+(w_{il}^\nu)_k\frac{\partial h(\tx)}{\partial x_m}+M_{il}(\tx)\frac{\partial^2 h(\tx)}{\partial x_k\partial x_m}.$$
Using $h(\tx)=(D_xf(\tx))_{lj}$, we obtain with
$\frac{\partial h}{\partial x_k}=\frac{\partial^2 f_l}{\partial x_k \partial x_j}$ and
$\frac{\partial^2 h}{\partial x_k\partial x_m}=\frac{\partial^3 f_l}{\partial x_k\partial x_m\partial x_j}$, $j\not=0$, defining $B_{3,\nu}= \max_{\tx\in{\T_\nu}, i,j,l\in \{0,\ldots,n\}}\left\|\frac{\partial^3 f(\tx)}{\partial x_i\partial x_j\partial x_l}\right\|_\infty$
\begin{eqnarray*}
|(H(\tx))_{km}|&=&\left|\frac{\partial^2 (g\cdot h)(\tx)}{\partial x_k\partial x_m}\right|\\
&\le&|(w^\nu_{il})_m|B_\nu+|(w^\nu_{il})_k|B_\nu +B_{3,\nu} |M_{il}(\tx)|
\end{eqnarray*}
Thus, using $\|H_1+H_2\|_2\le \|H_1\|_2+\|H_2\|_2$, as well as
$\|H_1\|_2\le \sqrt{n+1} \|H_1\|_1$,  $\|H_2\|_2\le \sqrt{n+1} \|H_2\|_\infty$ and $\|H\|_2\le (n+1)\|H\|_{max}$ we obtain
\begin{eqnarray*}\|H\|_2\
&\le&2\sqrt{n+1}\|w^\nu_{il}\|_1B_\nu+(n+1)B_{3,\nu}\max_{\tx\in \T_\nu}\max_{1\le i\le l\le n} |M_{il}(\tx)|\\
&\le&2\sqrt{n+1}D_\nu B_\nu+(n+1)B_{3,\nu}C_\nu,
\end{eqnarray*}
using Remark \ref{con_new} .
Hence, (\ref{help3}) establishes
\begin{eqnarray*}
\lefteqn{\left|\sum_{l=1}^nM_{il}(\tx)(D_xf(\tx))_{lj}-\sum_{l=1}^n\sum_{k=0}^{n+1}\lambda_k M_{il}(\tx_k)(D_xf(\tx_k))_{lj}\right|}\\
&\le&nh_\nu^2(2\sqrt{n+1}D_\nu B_\nu+(n+1)B_{3,\nu}C_\nu).
\end{eqnarray*}
which proves the lemma.
\end{proof}

Now we can estimate the value of $L_{M}$ for all points $\tx$ for the CPA metric $M$, given our constraints on the vertices.

\begin{lemma}\label{le2}
Let all constraints be satisfied. Then
the CPA metric $M$ defined in Definition \ref{CPA-def} fulfills:\begin{eqnarray}
\lambda_{max}(M(\tx)D_xf(\tx)+D_xf(\tx)^TM(\tx)+M'_+(\tx))
&\le&-1
\end{eqnarray}
for all $\tx\in \cD_K^\circ$.
\end{lemma}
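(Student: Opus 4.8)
The plan is to reduce the pointwise bound to the vertex bounds of Constraint 5 by affine interpolation, exactly as prepared by Lemmas \ref{le4.9} and \ref{4.12}. First I would fix $\tx=(t_0,x_0)\in\cD_K^\circ$ and apply Lemma \ref{le4.9} to obtain a simplex $\T_\nu=\co(\tx_0,\ldots,\tx_{n+1})\in\cT^\cC_K$ with $\tx+\theta\tf(\tx)\in\T_\nu$ for all $\theta\in[0,\theta^*]$; this puts us in the situation of Definition \ref{basic}, so that, writing $\tx=\sum_{k=0}^{n+1}\lambda_k\tx_k$ with $\lambda_k\ge 0$ and $\sum_k\lambda_k=1$, we have $M'_+(\tx)=\bigl(w^\nu_{ij}\cdot\tf(\tx)\bigr)_{i,j=1,\ldots,n}$, where $w^\nu_{ij}$ is the (constant) gradient of $M_{ij}|_{\T_\nu}$ on this same simplex $\T_\nu$. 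Introduce the symmetric $n\times n$ matrices
$$P(\tx):=M(\tx)D_xf(\tx)+D_xf(\tx)^TM(\tx)+M'_+(\tx)\quad\text{and}\quad P_k:=M(\tx_k)D_xf(\tx_k)+D_xf(\tx_k)^TM(\tx_k)+\bigl(w^\nu_{ij}\cdot\tf(\tx_k)\bigr)_{i,j=1,\ldots,n},$$
so that $P_k$ is precisely the matrix appearing in Constraint 5 at the vertex $\tx_k$ of $\T_\nu$.

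Lemma \ref{4.12} then gives $\bigl\|P(\tx)-\sum_{k=0}^{n+1}\lambda_kP_k\bigr\|_{max}\le E_\nu/n$ (both under the standing assumption $f\in C^2$ and, with the sharper constants, if $f\in C^3$). Since for a symmetric $n\times n$ matrix $A$ one has $\lambda_{max}(A)\le\|A\|_2\le n\|A\|_{max}$, this yields $\lambda_{max}\bigl(P(\tx)-\sum_k\lambda_kP_k\bigr)\le E_\nu$. I would then combine this with subadditivity of $\lambda_{max}$ (Weyl's inequality), convexity of $\lambda_{max}$ on symmetric matrices, and Constraint 5 in the form of Remark \ref{con_new}, which asserts $\lambda_{max}(P_k)\le -1-E_\nu$ for every vertex of $\T_\nu$:
$$\lambda_{max}(P(\tx))\le\lambda_{max}\Bigl(\sum_{k=0}^{n+1}\lambda_kP_k\Bigr)+E_\nu\le\sum_{k=0}^{n+1}\lambda_k\lambda_{max}(P_k)+E_\nu\le(-1-E_\nu)\sum_{k=0}^{n+1}\lambda_k+E_\nu=-1,$$
which is the assertion. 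Since $\tx\in\cD_K^\circ$ was arbitrary, the lemma follows.

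There is no genuinely hard step here; the two places that need care are (i) ensuring via Lemma \ref{le4.9} that $M'_+(\tx)$ is represented by the affine gradient $w^\nu_{ij}$ of the very simplex $\T_\nu$ whose vertices appear in Constraint 5 — otherwise $P(\tx)$ and $\sum_k\lambda_kP_k$ would not be directly comparable through Lemma \ref{4.12} — and (ii) keeping track of the factor $n$ when passing from the entrywise error bound $E_\nu/n$ to the eigenvalue error bound $E_\nu$, which is precisely why the constant $E_\nu$ carries the factor $n$ in its definition. Note that $E_\nu$ depends on $\tx$ only through the simplex $\T_\nu$ containing it, which is fixed once $\tx$ is chosen, so the cancellation $-1-E_\nu+E_\nu=-1$ is legitimate pointwise and the bound then holds for all $\tx\in\cD_K^\circ$.
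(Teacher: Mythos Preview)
Your proof is correct and follows essentially the same route as the paper: fix the simplex $\T_\nu$ from Lemma~\ref{le4.9}/Definition~\ref{basic}, split $\lambda_{max}(P(\tx))$ via subadditivity into the convex combination of vertex matrices and the error term, then apply convexity of $\lambda_{max}$ together with Constraint~5 (Remark~\ref{con_new}) on the first part and the bound $\lambda_{max}(S)\le n\|S\|_{max}$ combined with Lemma~\ref{4.12} on the second. Your notation $P(\tx)$, $P_k$ and the two closing remarks about the choice of simplex and the factor $n$ make the argument a bit cleaner than the paper's presentation, but the content is the same.
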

\begin{proof}
The maximal eigenvalue is a convex and thus sublinear function, i.e. for $L,S\in\mathbb S^n$ symmetric (but not necessarily positive definite), we have
\begin{eqnarray}
\lambda_{max}(L+S)&\le& \lambda_{max}(L)+\lambda_{max}(S).\label{*}
\end{eqnarray}
%Indeed, let  $\xi\in\mathbb R^n$ with $\|\xi\|=1$ be an eigenvector to the maximal eigenvalue of $L+S$. Then
%$$\lambda_{max}(L+S)=\xi^T(L+S)\xi=\xi^TL\xi+\xi^TS\xi\le \lambda_{max}(L)+\lambda_{max}(S).$$

%Or: $$\lambda_{max}(1/2(2 L)+1/2 (2S))\le1/2\lambda_{max}(2L)+1/2\lambda_{max}(2S).$$
We show that
 $|\lambda_{max} (S)|\le \|S\|_2$ holds for a symmetric (but not necessarily positive definite) matrix $S\in\mathbb R^{n\times n}$. Indeed, denote by $\lambda_1\le \ldots \le \lambda_n$ the eigenvalues of the symmetric matrix $S$ with corresponding eigenvectors $v_1,\ldots,v_n$, forming a basis of $\mathbb R^n$. Then
 $$Sv_i=\lambda_iv_i\Rightarrow S^TSv_i=S\lambda_iv_i=\lambda_i^2v_i.$$
 Thus, $v_i$ is an eigenvector of $S^TS$ with eigenvalue $\lambda_i^2$. Hence,
 \begin{eqnarray}\|S\|_2&=&\sqrt{\lambda_{max}(S^TS)}\nonumber\\
 &=&\max(|\lambda_1|,\ldots,|\lambda_n|)\nonumber\\
& =&\max(|\lambda_1|,|\lambda_n|)
 \ge |\lambda_n|=|\lambda_{max} (S)|.\label{2-eig}
 \end{eqnarray}

Fix $\tx\in\cD^\circ_K$ and $\T_\nu\in\cT^\cC_K$ such that $\tx+\theta\tf(\tx)\in\T_\nu$ for all $\theta\in[0,\theta^*]$ as in Definition \ref{basic}. Then $\tx=\sum_{k=0}^{n+1} \lambda_k\tx_k$, $\sum_{k=0}^{n+1}\lambda_k=1$ and $\lambda_k\in [0,1]$.
 Now, using (\ref{*}) and $\|S\|_2\le n\|S\|_{max}$, where $\|S\|_{max}=\max_{1\le i \le j \le n}|S_{ij}|$ for a matrix $S\in \mathbb S^n$, we have with Remark \ref{con_new}
\begin{eqnarray*}
\lefteqn{
\lambda_{max}(M(\tx)D_xf(\tx)+D_xf(\tx)^TM(\tx)+M'_+(\tx))}\\
&\le&
\lambda_{max}\left(
\sum_{k=0}^{n+1}\lambda_k\left(M(\tx_k)D_xf(\tx_k)+D_xf(\tx_k)^TM(\tx_k)+(w^\nu_{ij}\cdot \tf(\tx_k))_{i,j=1,\ldots,n}\right)\right)
\\
&&+\lambda_{max}\bigg(
M(\tx)D_xf(\tx)+D_xf(\tx)^TM(\tx)+M'_+(\tx)\\
&&\hspace{1.5cm}-
\sum_{k=0}^{n+1}\lambda_k\left(M(\tx_k)D_xf(\tx_k)+D_xf(\tx_k)^TM(\tx_k)+(w^\nu_{ij}\cdot \tf(\tx_k))_{i,j=1,\ldots,n}\right)\bigg)\\
&\le&\sum_{k=0}^{n+1}\lambda_k\lambda_{max}\left(
M(\tx_k)D_xf(\tx_k)+D_xf(\tx_k)^TM(\tx_k)+(w^\nu_{ij}\cdot \tf(\tx_k))_{i,j=1,\ldots,n}\right)
\\
&&+\bigg\|M(\tx)D_xf(\tx)+D_xf(\tx)^TM(\tx)+M'_+(\tx)\\
&&\hspace{1.5cm}-
\sum_{k=0}^{n+1}\lambda_k\left(M(\tx_k)D_xf(\tx_k)+D_xf(\tx_k)^TM(\tx_k)+(w^\nu_{ij}\cdot \tf(\tx_k))_{i,j=1,\ldots,n}\right)
\bigg\|_2\\
&\le&\sum_{k=0}^{n+1}\lambda_k(-1-E_\nu)
+n\bigg\|M(\tx)D_xf(\tx)+D_xf(\tx)^TM(\tx)+M'_+(\tx)\\
&&
\hspace{1.5cm}-
\sum_{k=0}^{n+1}\lambda_k\left(M(\tx_k)D_xf(\tx_k)+D_xf(\tx_k)^TM(\tx_k)+(w^\nu_{ij}\cdot \tf(\tx_k))_{i,j=1,\ldots,n}\right)\bigg
\|_{max}\\
&\le&-1-E_\nu
+E_\nu\\
&=&-1
\end{eqnarray*}
by Lemma \ref{4.12} and the definition of $E_\nu$.
%as $h$ is so small that
%$$n\sqrt{n}(2n Bmh+C B h^2)\le \nu/2.$$
%DIREKTE ABSCHAETZUNG VON $\|_2$ zu $\|_{max}$
\end{proof}

We summarize the results of this section in the following theorem.

\begin{theorem}\label{summary}
Let $\cT^\cC_K$ be a triangulation as in Definition \ref{triconstr} and $\cD_K:=\bigcup_{\T_\nu\in\cT^\cC_K}\T_\nu$.
Let all constraints be satisfied. Then
the CPA metric defined in Definition \ref{CPA-def} fulfills:
\begin{enumerate}
\item $M(\tx)$ is symmetric, positive definite and a $T$-periodic function and thus defines a Riemannian metric on $\cD_K^\circ$ in the sense of Definition \ref{Riem}.
\item $L_M(\tx)\le -\frac{1}{2\mu_{max}}\le -\frac{1}{2C}<0$ for all $\tx \in \cD_K^\circ$, where $\mu_{max}:=\max_{\tx\in\cD_K}\lambda_{max}(M(\tx))>0$, $L_M$ is defined in Theorem \ref{th1} and $C:=\max_{\T_\nu\in\cT^\cC_K}C_\nu$.
    \item If $G\subset \cD_K^\circ$ is positively invariant, then
    $$\int_0^T p(t)^T M'_+(t,x(t))p(t)\, dt$$ exists and is finite for all solutions
$x(t)$ with $x(0)\in G$ and all functions $p\in C^0([0,T], \mathbb R^n)$.
%L_M$ is defined using the derivative $h\to 0^+$.
\end{enumerate}
Hence, $M$ satisfies all assumptions of Theorem \ref{th1}.
\end{theorem}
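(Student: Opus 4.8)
The plan is to assemble Theorem \ref{summary} directly from the lemmas already proved in this section, treating each of the three claims in turn. Claim 1 is essentially a restatement of Lemma \ref{le1} together with Lemma \ref{le4.9}: Lemma \ref{le1} gives symmetry, positive definiteness (indeed $M(\tx)\succeq\epsilon_0 I$) and $T$-periodicity of the CPA interpolant, while Lemma \ref{le4.9} applied to the locally finite simplicial complex $\cT^\cC_K$ shows that $M$ is Lipschitz with respect to $x$ on $\cD_K^\circ$ and that the forward orbital derivative $M'_+(\tx)$ exists everywhere on $\cD_K^\circ$; these are exactly the requirements of Definition \ref{Riem}. So the first bullet is just a citation of those two lemmas.

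For Claim 2 the plan is to translate the bound on the symmetrized matrix from Lemma \ref{le2} into a bound on $L_M$ as defined in (\ref{LM}). By Lemma \ref{le2}, for every $\tx\in\cD_K^\circ$ the symmetric matrix $A(\tx):=M(\tx)D_xf(\tx)+D_xf(\tx)^TM(\tx)+M'_+(\tx)$ satisfies $\lambda_{max}(A(\tx))\le -1$, i.e. $A(\tx)\preceq -I$. Now for any $w$ with $w^TM(\tx)w=1$ we have
\begin{eqnarray*}
w^T\!\left[M(\tx)D_xf(\tx)+\tfrac12 M'_+(\tx)\right]\!w
&=&\tfrac12\, w^T A(\tx)\, w
\ \le\ -\tfrac12\, w^Tw.
\end{eqnarray*}
Here I used that $w^T M(\tx)D_xf(\tx)w = w^T D_xf(\tx)^T M(\tx)w$ (as noted after (\ref{cont})) to rewrite the quadratic form, so only the symmetrized combination appears. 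It remains to bound $w^Tw$ from below given the normalization $w^TM(\tx)w=1$: since $M(\tx)\preceq\mu_{max}I$ with $\mu_{max}:=\max_{\tx\in\cD_K}\lambda_{max}(M(\tx))$ (this maximum is attained and positive because $\cD_K$ is compact and $M$ is continuous and positive definite), we get $1=w^TM(\tx)w\le \mu_{max}\,w^Tw$, hence $w^Tw\ge 1/\mu_{max}$. Taking the supremum over admissible $w$ yields $L_M(\tx)\le -\tfrac{1}{2\mu_{max}}$. Finally $\mu_{max}\le C$: by Constraint 2 (in the form with a common constant $C=\max_\nu C_\nu$) we have $M(\tx_k)\preceq C I$ at every vertex, and since $\lambda_{max}$ is convex, $\lambda_{max}(M(\tx))=\lambda_{max}(\sum_k\lambda_k M(\tx_k))\le\sum_k\lambda_k\lambda_{max}(M(\tx_k))\le C$ on each simplex, so $\mu_{max}\le C$ and $-\tfrac{1}{2\mu_{max}}\le-\tfrac{1}{2C}<0$.

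Claim 3 is a direct consequence of the last statement of Lemma \ref{le4.9}: if $G\subset\cD_K^\circ$ is positively invariant and $x(0)\in G$, then $x(t)\in G\subset\cD_K^\circ$ for all $t\ge0$, in particular for $t\in[0,T]$, so Lemma \ref{le4.9} guarantees that $\int_0^T p(t)^T M'_+(t,x(t))p(t)\,dt$ exists and is finite for every $p\in C^0([0,T],\mathbb R^n)$. The concluding sentence then just checks off the hypotheses of Theorem \ref{th1}: Claim 1 supplies the Riemannian metric in the sense of Definition \ref{Riem}, Claim 2 supplies the contraction inequality $L_M\le-\nu$ with $\nu=\tfrac{1}{2C}>0$, and Claim 3 supplies the integrability condition needed for the Floquet-exponent estimate. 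I do not anticipate a real obstacle here — everything is bookkeeping — but the one point deserving care is the passage from the matrix inequality of Lemma \ref{le2} to the quadratic-form definition (\ref{LM}) of $L_M$, namely getting the factor $\tfrac12$ and the eigenvalue-versus-$M$-normalization bookkeeping right; that is the step I would write out most explicitly.
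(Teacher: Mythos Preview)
Your proposal is correct and follows essentially the same route as the paper: Part~1 via Lemmas~\ref{le1} and~\ref{le4.9}, Part~2 by combining Lemma~\ref{le2} with the normalization $w^TM(\tx)w=1$ and the bound $\mu_{max}\le C$, and Part~3 via the last statement of Lemma~\ref{le4.9}. Your Part~2 is in fact slightly more explicit than the paper's compressed quotient argument (and your justification of $\mu_{max}\le C$ via convexity of $\lambda_{max}$ fills in a step the paper leaves implicit), but the logic is identical.
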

\begin{proof}
Part 1. follows directly from Lemma \ref{le1} and \ref{le4.9}.

For Part 2., note that $\mu_{max}$ exists and is positive, since $M$  is positive definite for all $\tx$ and depends continuously on $\tx$. Then
\begin{eqnarray*}
L_M(\tx)&=&
\sup_{w\in \mathbb R^n\setminus \{0\}} \frac{w^T\left[M(\tx)D_xf(\tx)
+\frac{1}{2}M'_+(\tx)\right]w}{w^T M(\tx)w}\\
&\le&\frac{1}{2}\frac{\lambda_{max}(M(\tx)D_xf(\tx)+D_xf(\tx)^TM(\tx)
+M'_+(\tx))}{\lambda_{max}(M(\tx))}\\
&\le&-\frac{1}{2\mu_{max}}
\end{eqnarray*}
using Lemma
\ref{le2}. Moreover, Constraint 2. yields that $\mu_{max}\le \max_{\T_\nu \in\cT^\cC_K} C_\nu\le  C$.
%} and Lemma \ref{le4.9} as $M'_+$ can be calculated by restriction to one simplex.

Part 3. follows from Lemma \ref{le4.9} using that $G\subset \cD_K^\circ$ is positively invariant. This shows the theorem.
\end{proof}

\subsection{Objective function}\label{objective}

While we are primarily interested in the calculation of any Riemannian contraction metric, i.e. a feasible solution of the semidefinite optimization problem, the optimization problem also allows for an objective function.
Theorem \ref{summary} suggests a possible objective function, namely simply the maximum over all $C_\nu$, i.e. $C=\max_{\T_\nu\in\cT^\cC_K}C_\nu$. We can either implement this by adding $C$ as an additional variable with constraints $$C_\nu\le C$$ for all $\nu$. Or we can replace the variables $C_\nu$ in Constraint 2. by the uniform bound $C$ directly, which reduces the number of Constraints 2. to $v$, the number of vertices. By minimizing the constant $C$, we minimize the bound $-\frac{1}{2C}$ on the maximal Floquet exponent. Note, however, that $C$ has a lower bound given by $\epsilon_0$. This means that by choosing $\epsilon_0$ too large, the estimate on the Floquet exponent will be very rough.

\section{Feasibility of the semidefinite optimization problem}
\label{sec3}

%????USE $\|x_k-x_l\|_2\le h_\nu$ for all $k,l$.????

In the next theorem we assume that there exists an exponentially stable periodic orbit. Then we can show that the semidefinite optimization problem has a feasible  solution and we can thus construct a suitable Riemannian metric. We have to assume that the triangulation is fine enough, i.e. in practice we start with a coarse triangulation and refine until we obtain a solution. The triangulation has to stay suitably regular, i.e. the angles in simplices have lower and upper bounds. For simplicity, we use the reference simplicial complex and scale it uniformly as described in Section \ref{tria}, but other refinements are also possible.

\begin{theorem}\label{feas}
Let the system $\dot{x}=f(t,x)$, $f\in C^2(S^1_T\times \R^n,\R^n) $ have an exponentially stable periodic orbit $\Omega$ with basin of attraction $A(\Omega)$. Let $\cC\subset A(\Omega)$, $\cC\in \cN$, $\Omega\subset \cC$ be a compact set in the cylinder $S^1_T \times \mathbb R^n$.
Fix $\epsilon_0>0$.

Then there is a $K^*\in\mathbb N$ such that the semidefinite optimization problem is feasible for all triangulations $\cT^\cC_K$  as described in Section \ref{tria} with $K\ge K^*$.
Note that we can choose the constants $C_\nu$ and $D_\nu$ to be the same for each simplex, i.e. $C_\nu=C$ and $D_\nu=D$ for all $\nu$.
\end{theorem}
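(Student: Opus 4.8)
The plan is to build the feasible point by starting from a smooth contraction metric that is known to exist, sampling it at the vertices of the triangulation, and then verifying that all five constraints hold once $K$ is large enough. The existence of a smooth Riemannian metric $\widetilde M \in C^1(S^1_T\times\R^n,\mathbb S^n)$ with a uniform contraction rate on a neighborhood of $\cC$ follows from the converse theorems for exponentially stable periodic orbits (the results of \cite{Giesl-04-1,zaa} applied to $\Omega$, $A(\Omega)$, together with a compactness argument on $\cC$); by rescaling $\widetilde M$ we may assume it satisfies $\widetilde M(\tx)\succeq 2\epsilon_0 I$ and
\[
\lambda_{max}\left(\widetilde M(\tx)D_xf(\tx)+D_xf(\tx)^T\widetilde M(\tx)+\widetilde M'(\tx)\right)\le -a<0
\]
on a compact neighborhood $\cC'$ of $\cC$ in the cylinder (here $\widetilde M'$ is the usual orbital derivative). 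Since $f\in C^2$ and $\cC'$ is compact, $\widetilde M$ and all the quantities appearing in the constraints ($D_xf$, the second derivatives of $f$ entering $B_\nu$, etc.) are uniformly bounded on $\cC'$, so the constants $C:=\sup\|\widetilde M\|_2+1$, $D:=\sup\|\nabla_{\tx}\widetilde M\|_{max}+1$, and $\sup_\nu B_\nu$ have finite $K$-independent bounds for all simplices lying in $\cC'$ (for $K$ large $\cD_K\subset\cC'$). We then \emph{define} the feasible point by $M_{ij}(\tx_k):=\widetilde M_{ij}(\tx_k)$ at each vertex, and $C_\nu:=C$, $D_\nu:=D$.

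Next I would check the constraints one by one. \textbf{Periodicity} (Constraint 1) is immediate since $\widetilde M$ is $T$-periodic and the triangulation respects the periodicity. \textbf{Bound on $M$} (Constraint 2): $\widetilde M(\tx_k)\preceq \|\widetilde M(\tx_k)\|_2 I\preceq C I$. \textbf{Positive definiteness} (Constraint 4): $\widetilde M(\tx_k)\succeq 2\epsilon_0 I\succeq\epsilon_0 I$. \textbf{Bound on the gradient} (Constraint 3): the gradient $w^\nu_{ij}$ of the affine interpolant of $\widetilde M_{ij}$ on $\T_\nu$ satisfies $|(w^\nu_{ij})_l|\le \|\nabla_{\tx}\widetilde M_{ij}\|_{C^0(\T_\nu)}\le D-1< D/(n+1)$ once we also absorb the factor $n+1$ — more carefully, $\|w^\nu_{ij}\|_\infty$ is bounded by the sup of $\|\nabla_{\tx}\widetilde M_{ij}\|$ on $\T_\nu$ (this is a standard estimate for affine interpolation of a $C^1$ function, using Lemma \ref{Xlemma} together with a first-order Taylor estimate), which is $\le D/(n+1)$ provided $D$ was chosen with the extra factor $n+1$; I would simply fix $D:=(n+1)(\sup\|\nabla_{\tx}\widetilde M\|_{max}+1)$ at the outset. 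The genuinely $K$-dependent constraint is \textbf{Contraction} (Constraint 5). Here the idea is: at a vertex $\tx_k$ the matrix
\[
\widetilde M(\tx_k)D_xf(\tx_k)+D_xf(\tx_k)^T\widetilde M(\tx_k)+(w^\nu_{ij}\cdot\tf(\tx_k))_{i,j=1,\ldots,n}
\]
differs from the smooth expression $\widetilde M(\tx_k)D_xf(\tx_k)+D_xf(\tx_k)^T\widetilde M(\tx_k)+\widetilde M'(\tx_k)$ only in the last summand, where the true orbital derivative $\widetilde M'(\tx_k)=\nabla_{\tx}\widetilde M(\tx_k)\cdot\tf(\tx_k)$ is replaced by the interpolation gradient $w^\nu_{ij}\cdot\tf(\tx_k)$; since $\widetilde M\in C^1$ (indeed $C^2$ as $f\in C^2$ and $\widetilde M$ can be taken one degree smoother than needed, or one uses a mollification argument), the interpolation-gradient error is $O(h_\nu)=O(2^{-K})$ uniformly. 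Hence this matrix has maximal eigenvalue $\le -a + O(2^{-K})$. Finally $E_\nu = h_\nu n B_\nu[\sqrt{n+1}h_\nu D + 2n(n+1)C] = O(h_\nu)=O(2^{-K})$ because $B_\nu, C, D$ are uniformly bounded and $h_\nu\le S^*2^{-K}T$ by (\ref{hnu}). Therefore, for $K$ large enough that $a - O(2^{-K}) \ge 1$, we get the required
\[
\lambda_{max}(\cdots)+E_\nu\le -a+O(2^{-K})\le -1,
\]
i.e.\ Constraint 5 in the form of Remark \ref{con_new}.

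The main obstacle, and the only place real care is needed, is making precise the converse statement that yields the smooth metric $\widetilde M$ with a \emph{uniform} contraction rate and a \emph{uniform} lower bound $\succeq 2\epsilon_0 I$ on $\cC'$, and that it is smooth enough (at least $C^1$ with locally Lipschitz derivatives, so that the interpolation error of the gradient is genuinely $O(h_\nu)$): the cited existence results give a metric with $L_{\widetilde M}<0$ pointwise on $A(\Omega)$, and one must extract a uniform negative bound on the compact set $\cC$, extend to a slightly larger compact $\cC'$, and possibly rescale $\widetilde M\mapsto \kappa\widetilde M$ to meet the $\epsilon_0$ requirement (note rescaling by a positive constant does not change $L_{\widetilde M}$ nor the sign of the contraction matrix). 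A secondary technical point is bookkeeping the dimensional constants so that the chosen $C$ and $D$ really dominate both the bound constraints and the $O(1)$ parts of $E_\nu$, and checking that the interpolation-gradient estimate (Constraint 3 and the $w^\nu_{ij}\cdot\tf$ term in Constraint 5) follows cleanly from Lemma \ref{Xlemma} applied to $\cT_K^{basic}$, giving $\|w^\nu_{ij}\|_1 \le \|X^{-1}_{K,\nu}\|_1 \cdot (n+1) h_\nu \max\|\nabla_{\tx}\widetilde M_{ij}\|$ and hence a $K$-independent bound. Once the uniform smooth metric is in hand, everything else is a routine $O(2^{-K})\to 0$ argument.
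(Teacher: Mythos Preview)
Your approach is essentially the paper's own: obtain a smooth $C^2$ metric from the converse theorem \cite{zaa}, rescale, sample at vertices, and verify the constraints using that the interpolation-gradient error and $E_\nu$ are both $O(h_\nu)$ via Lemma \ref{Xlemma} and a second-order Taylor estimate. One point needs tightening: your final step requires $a>1$ (you write ``for $K$ large enough that $a-O(2^{-K})\ge 1$''), yet your rescaling discussion only arranges $\widetilde M\succeq 2\epsilon_0 I$ and remarks that rescaling ``does not change \ldots\ the sign of the contraction matrix.'' In fact rescaling $M\mapsto\kappa M$ multiplies the whole matrix $MD_xf+D_xf^TM+M'$ by $\kappa$, so it \emph{does} scale $a$; you must take $\kappa=\max(\epsilon_0/\epsilon_1,\,2/\epsilon_2)$ (as the paper does) to simultaneously secure the lower bound and $a\ge 2$. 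Without this, if $\epsilon_0$ is small the rescaling needed for $\succeq 2\epsilon_0 I$ alone could leave $a<1$ and Constraint 5 would never be met. Also, the $O(h_\nu)$ gradient-interpolation error genuinely uses $\widetilde M\in C^2$ (via the Hessian bound), which the paper justifies by noting that the construction in \cite{zaa} gives $C^2$ when $f\in C^2$; your parenthetical ``or one uses a mollification argument'' is unnecessary here.
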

\begin{proof}
 \begin{enumerate}

\item \textbf{Smooth Riemannian metric}

Denote the  maximal real part of the Floquet exponents of the exponentially stable periodic orbit $\Omega$
 by $-\nu_0<0$ and set $\epsilon:=\frac{\nu_0}{2}$.

Since $\cC\subset A(\Omega)$ is compact and $A(\Omega)$ open, there is a positive Euclidean distance between $\cC$ and the boundary of $A(\Omega)$. Let $d>0$ denote this distance if it is finite and otherwise set $d:=1$.
Now define $\cC^*$ to be the set of all $\tx\in S^1_T\times \R^n$ that have Euclidean distance less than or equal to $d/2$ to $\cC$.  Clearly $\cC^*\in\cN$.
For all large enough $K\in\N_0$ the Euclidean distance from the boundary of  $\displaystyle \cD_K:= \bigcup_{\T_\nu\in\cT^\cC_K}\T_\nu$
to $\cC$ is bounded above by $\max_{\T_\nu\in\cT^\cC_K}
\diam(\T_\nu)\le S^* 2^{-K}T$, see (\ref{hnu}). Thus, there is a $K^{**}\in\N_0$ such that $\cC \subset \cD_K \subset \cC^*$ for every
$K \ge K^{**}$.

Now apply the following theorem to the above defined $\cC^*$ and $\epsilon$.

\begin{theorem}[Theorem 4.2, \cite{zaa}] \label{lyapweight}
Assume that $f\in C^0(\mathbb R\times
\mathbb R^n,\mathbb R^n)$ is a periodic function in $t$ with period $T$,
and all partial derivatives of order one with respect to $x$ are
continuous functions of $(t,x)$.
Let $\Omega:=\{(t,\tilde{x}(t))\in  S^1_T\times \mathbb R^n\}$
 be an exponentially asymptotically stable
periodic orbit, $A(\Omega)$ be its basin of attraction,
and let the maximal real part of the Floquet exponents
 be $-\nu_0<0$.
Then for all $\epsilon>0$ and all compact sets $\cC^*$ with
 $\Omega\subset  \cC^*\subset A(\Omega)$
 there exists a  Riemannian
metric $\widetilde{M}\in C^1(\cC^*,\mathbb R^{n\times n})$,
such that ${ L}_{\widetilde{M}}(t,x)\le -\nu_0+\epsilon$ holds for all  $(t,x)\in \cC^*$.
\end{theorem}
\begin{remark}
The proof of this theorem, cf. \cite[Theorem 4.2]{zaa} shows that we have $\widetilde{M}\in C^2(\cC^*,\mathbb R^{n\times n})$, if $f\in C^2(S^1_T\times \mathbb R^n,\mathbb R^n)$, as is the case by our assumptions.
\end{remark}

 Since both $\widetilde{M}(\tx)$ and $\lambda_{min}$  are continuous functions and $\widetilde{M}$ is positive definite for all $\tx\in\cC^*$, there exists an $\epsilon_1>0$ such that
$$\lambda_{min}(\widetilde{M}(\tilde{x}))\ge \epsilon_1$$
for all $\tx\in\cC^*$, as $\cC^*$ is compact.

Moreover, since $L_{\widetilde{M}}$ is a continuous function (note that $\widetilde{M}$ is smooth) satisfying $L_{\widetilde{M}}(\tx)\le -\frac{\nu_0}{2}$ for all $\tx\in\cC^*$, we have
\begin{eqnarray*}
\lambda_{max}(\widetilde{M}(\tx)D_xf(\tx)+D_xf(\tx)^T\widetilde{M}(\tx)+\widetilde{M}'(\tx))&<&0
\end{eqnarray*}
for all $\tx\in\cC^*$. As $\widetilde{M}(\tx)D_xf(\tx)+D_xf(\tx)^T\widetilde{M}(\tx)+\widetilde{M}'(\tx)$ and  $\lambda_{max}$  are continuous functions, there  exists an $\epsilon_2>0$ such that
 $$\lambda_{max}(\widetilde{M}(\tx)D_xf(\tx)+D_xf(\tx)^T\widetilde{M}(\tx)+\widetilde{M}'(\tx))\le -\epsilon_2$$
for all $\tx\in\cC^*$, as $\cC^*$ is compact.

 Now define $M(\tx):=\max\left(\frac{\epsilon_0}{\epsilon_1},\frac{2}{\epsilon_2}\right)\widetilde{M}(\tx)$.
 Then
 \begin{eqnarray}
\lambda_{min}(M(\widetilde{x}))&\ge& \epsilon_0\label{cond4}\\
\lambda_{max}(M(\tx)D_xf(\tx)+D_xf(\tx)^TM(\tx)+M'(\tx))&\le& -2\label{cond5}
\end{eqnarray}
 for all $\tx\in \cC^*$.

$M$ is $C^2$ on the compact set $\cC^*$, so that we can define the following constants
\begin{eqnarray*}
M_0^*&:=&\max_{\tx\in \cC^*}\lambda_{max}(M(\tx))\\
M_1^*&:=&\max_{\tx\in \cC^*}\max_{1\le i\le j\le n}\|\nabla_{\tx} M_{ij}(\tx)\|_1\\
M_2^*&:=&\max_{\tx\in \cC^*}\max_{1\le i\le j\le n}\|H_{M_{ij}}(\tx)\|_2
\end{eqnarray*}
where $H_{M_{ij}}(\tx)$ denotes the Hessian of $M_{ij}(\tx)$.

\item \textbf{Assigning the variables of the optimization problem}

 Set
 \begin{eqnarray*}  F&:=&\max_{\tx\in \cC^*}\|\tf(\tx)\|_\infty\\
 B&:=&\max_{\tx\in \cC^*, i,j\in \{0,\ldots,n\}}\left\|\frac{\partial^2 f(\tx)}{\partial x_i\partial x_j}\right\|_\infty\\
B_{3}&:=&\max_{\tx\in \cC^*, i,j,k\in \{0,\ldots,n\}}\left\|\frac{\partial^3 f(\tx)}{\partial x_i\partial x_j\partial x_k}\right\|_\infty\mbox{ if }f\in C^3.
 \end{eqnarray*}

Finally, we define, using the constant $X^*$ from Lemma \ref{Xlemma},
   \begin{eqnarray*}
C^*&:=&M_2^* (n+1)\left(\frac{S^*X^*}{2s^*} + \sqrt{n+1}\right)\\
C&:=&M_0^*\\
  D&:=& (C^*+M_1^*)(n+1)\\
  h_1^*&:=&\left\{\begin{array}{ll}\,[2nB(\sqrt{n+1}D+2n(n+1)C]^{-1}&\mbox{  if }f\in C^2\\
\,[2n(\sqrt{n+1}(1+4n)BD+2n(n+1)B_3C)]^{-1/2}&\mbox{  if }f\in C^3\end{array}\right.\\
h_2^*&:=&\frac{1}{2nFC^*}.
  \end{eqnarray*}
%
%   \begin{eqnarray}
%  C_\nu&=&C=\max_{\tx\in K}\max_{1\le i\le j\le n}|M_{ij}(\tx)|\mbox{ and }\\
%  D_\nu&=&D= C^*+M_1^*.
%  \end{eqnarray}

%
% Define $E=E_\nu$ with these $C=C_\nu$, $D=D_\nu$, $B$ as maximum over $K$, and choose $h_1^*=h_\nu$ such that  $E=\frac{1}{2}$. In detail,
% If $f\in C^2$, set
%$$h_1^*=[2n(n+1)B(D+nC)]^{-1}$$ and  if $f\in C^3$, then set
%$$h_1^*=[2n(n+1+2n(2\sqrt{n+1})B+(n+1)B_3C)]^{-1}$$

% Choose $h_2^*:=\frac{1}{2nFC^*}$.

 Now define
 $$K^*:=\max\left(\left\lceil \frac{\ln (S^* T)-\ln (\min(h^*_1,h^*_2,1))}{\ln 2}\right\rceil,K^{**}\right),$$
 let $K\ge K^*$
  and consider the triangulation $\cT^{\cC}_K$. Note that, since $K\ge K^*$, we have by (\ref{hnu})
  \begin{eqnarray}h_\nu\le S^*2^{-K}T&\le& S^*2^{-K^*} T\le \min(h_1^*,h_2^*,1).\label{hnu2}
   \end{eqnarray}
   Interpolate $M(\tx)$ on this triangulation, i.e.  assign the variables $M_{ij}(\tx_k)$ for all vertices $\tx_k$ of the triangulation; moreover, set $C_\nu=C$ and $D_\nu=D$ for all $\nu$. By (\ref{hnu2})   and the definition of $h_1^*$  this ensures
 \begin{eqnarray}
% \|(X^{-1}W^*)_{ij}-\nabla M_{ij}\|_1&\le& C^* h_\nu\le \frac{1}{2nF}\\
 E_\nu&\le&\frac{1}{2}.\label{enu}
 \end{eqnarray}

\item \textbf{Auxiliary results}

  To check the feasibility in the next step, we need some estimates.

Consider a simplex $\T_{\nu}=\co(\tx_0,\tx_1,\ldots,\tx_{n+1})\in\cT^{\cC}_K$. Denote, as in Constraint 3., (\ref{defw})
$w^\nu_{ij} :=
X_{K,\nu}^{-1} M_{ij,\nu}^*$, where $X_{K,\nu}=\left(\begin{array}{c}(\tx_1-\tx_0)^T\\
(\tx_2-\tx_0)^T\\
\vdots\\
(\tx_{n+1}-\tx_0)^T
\end{array}\right)$ and
\begin{eqnarray}
M_{ij,\nu}^*:=
\begin{pmatrix}
    M_{ij}(\btx_1)-M_{ij}(\btx_0) \\
    M_{ij}(\btx_2)-M_{ij}(\btx_0) \\
    \vdots\\
    M_{ij}(\btx_{n+1})-M_{ij}(\btx_0)
  \end{pmatrix}.\label{wknu}
  \end{eqnarray}
  Note that $M_{ij}(\tx)$ is two times continuously differentiable on $\T_{\nu}\subset \cC^*$ and for
$i,j\in \{1,2,\ldots,n+1\}$ we thus have by Taylor's theorem
$$
M_{ij}(\btx_k)=M_{ij}(\btx_0)+\nabla_{\tx} M_{ij}(\btx_0)\cdot (\btx_k - \btx_0) + \frac{1}{2}(\btx_k-\btx_0)^T H_{M_{ij}}(\btz_k)(\btx_k-\btx_0),
$$
where $H_{M_{ij}}$ is the Hessian of $M_{ij}$ and $\btz_k=\btx_0+\vartheta_k(\btx_k-\btx_0)$ for some $\vartheta_k \in\,]0,1[$.

By rearranging terms and combining, this delivers with (\ref{wknu})
\begin{equation}
\label{wmXnW}
M_{ij,\nu}^* - X_{K,\nu} \nabla_{\tx} M_{ij}(\btx_0)=\frac{1}{2}
\begin{pmatrix}
    (\btx_1-\btx_0)^T H_{M_{ij}}(\btz_1)(\btx_1-\btx_0)\\
    (\btx_2-\btx_0)^T H_{M_{ij}}(\btz_2)(\btx_2-\btx_0)\\
    \vdots\\
    (\btx_{n+1}-\btx_0)^T H_{M_{ij}}(\btz_{n+1})(\btx_{n+1}-\btx_0)
  \end{pmatrix}.
\end{equation}
We have
$$
\left|(\btx_k-\btx_0)^T H_{M_{ij}}(\btz_k)(\btx_k-\btx_0)\right| \leq h_\nu^2 \|H_{M_{ij}}(\btz_k)\|_2 \leq M_2^* h_\nu^2.
$$
Note that $\btz_k\in \T_\nu$ since the simplex is convex.
Hence, by (\ref{wmXnW}),
\begin{equation}
\|M_{ij,\nu}^* - X_{K,\nu} \nabla_{\tx} M_{ij}(\btx_0)\|_1
  \leq \frac{n+1}{2}M_2^* h_\nu^2. \label{Ain1}
\end{equation}

Now we need to obtain an estimate on $\nabla_{\tx} M_{ij}(\btx_k)$.
For $k\in\{1,2,\ldots,n+1\}$, $l\in\{0,\ldots,n\}$, there is a $\btz_{kl}$ on the line segment between $\btx_k$ and $\btx_0$, such that
$$
\partial_l M_{ij}(\btx_k)-\partial_lM_{ij}(\btx_0)=\nabla_{\tx}
\partial_lM_{ij}(\btz_{kl})\cdot(\btx_k-\btx_0),
$$
where $\partial_lM_{ij}$ denotes the $l$-th component of $\nabla_{\tx} M_{ij}$ and $\nabla_{\tx}\partial_l M_{ij}$ is the gradient of this function.
Then  we have
$$
|\partial_lM_{ij}(\btx_k)-\partial_lM_{ij}(\btx_0)|\leq \|\nabla_{\tx}\partial_lM_{ij}(\btz_{kl})\|_2\|\btx_k-\btx_0\|_2\leq \sqrt{n+1}M_2^*h_\nu.
$$
Hence,
\begin{align*}
\|\nabla_{\tx} M_{ij}(\btx_k)- \nabla_{\tx} M_{ij}(\btx_0)\|_1 \leq (n+1)^{3/2}M_2^*h_\nu.
\end{align*}
From this, Lemma \ref{Xlemma} and (\ref{Ain1}) we obtain the estimate
\begin{eqnarray}
\lefteqn{\|w^\nu_{ij} -\nabla_{\tx} M_{ij}(\btx_k)\|_1}\n\\
&=&
\| X_{K,\nu}^{-1}M_{ij,\nu}^* - \nabla_{\tx} M_{ij}(\btx_k)\|_1\n\\
&\le&
\| X_{K,\nu}^{-1}M_{ij,\nu}^* - \nabla_{\tx} M_{ij}(\btx_0)\|_1
+ \|\nabla_{\tx} M_{ij}(\btx_0) - \nabla_{\tx} M_{ij}(\btx_k)\|_1\n\\
&\le&
\| X_{K,\nu}^{-1}\|_1 \|M_{ij,\nu}^* - X_{K,\nu} \nabla_{\tx} M_{ij}(\btx_0)\|_1
+ (n+1)^{3/2}M_2^*h_\nu\n\\
&\le&\frac{2^KX^*}{s^* T}
   \frac{n+1}{2}M_2^* h_\nu^2+ (n+1)^{3/2}M_2^*h_\nu\n\\
&\le&
\frac{S^*X^*}{s^*}   \frac{n+1}{2}M_2^* h_\nu+ (n+1)^{3/2}M_2^*h_\nu\n\\
   &=&C^*h_\nu,\label{xfirst}
%& \leq &\frac{4A}{2^k} \left( h_k\|X_{k,\nu}^{-1}\|_1  + 1\right) \leq \frac{4A}{2^k}\left(2\sqrt{2}+1\right),\label{xfirst}
\end{eqnarray}
using (\ref{hnu}) and the definition of
 $C^*$.

A further useful consequence, which we need later, is that
\begin{eqnarray}\| X_{K,\nu}^{-1}M_{ij,\nu}^* \|_1
&\le&
\| X_{K,\nu}^{-1}M_{ij,\nu}^* - \nabla_{\tx} M_{ij}(\tx_k)\|_1
+\|  \nabla_{\tx} M_{ij}(\tx_k)\|_1\n\\
&\le&  C^*  h_\nu+M_1^*.
\label{cfirst}
\end{eqnarray}

\item \textbf{Feasibility}

   Now we check that the assignment in Step 2.   is a feasible point of the semidefinite optimization problem. We discuss each constraint in the following.

   \begin{enumerate}[{Constraint} 1.]
   \item This follows directly from the periodicity of $M$.
   \item This follows directly from the definition of $C$.
   \item
   We have for all $1\le i\le j \le n$ and any $l=0,\ldots,n$ by (\ref{cfirst})
$$
|(w_{ij}^\nu)_l|\le
\|w_{ij}^\nu\|_1=
\| X_{K,\nu}^{-1}M_{ij,\nu}^* \|_1
\le    C^*h_\nu+M_1^*\le \frac{D}{n+1}%=\frac{D_\nu}{n+1}
$$%\end{eqnarray*}
by definition of $D$
as $h_\nu\le 1$.
   \item This follows from  (\ref{cond4}).
   \item This is the main step and will be shown below.
   \end{enumerate}
%
%  This makes $M$ satisfy 1,2,5,6 (without $E_{ij}$) with $-2$ instead of $-1$.
%
%
% Now triangulate choosing $h$ such that $E_{ij}\le 1$ (with the above $m$ and $C$).
%  Interpolate $M$ on this triangulation. This does not change 1,2,3,4 (4 by mean value theorem??? 4 SHOULD BE DIFFERENT TOO!). For 5 we have the same.
%
%  For 6. we need to estimate the difference of $M_{ij}'(\tx_k)=\nabla M_{ij} (\tx_k) f(\tx_k)$ to
%$w^{\nu}_{ij}=X^{-1}\left(\begin{array}{c}M_{ij}(\tx_1)-M_{ij}(\tx_0)\\ \vdots\\ M_{ij}(\tx_n)-M_{ij}(\tx_0)\end{array}\right)$.
%Derive estimate on this difference (as for Lyapunov), no problems at $0$. Need that $M_{ij}$ is $C^2$ to apply Taylor (see construction 2d paper).

%

%We show that constraint 5. is satisfied.
We fix a simplex $\T_\nu=\co(\tx_0,\ldots,\tx_{n+1})\in \cT^{\cC}_K$ and a vertex $\tx_k$, $k\in \{0,\ldots,n+1\}$.
With Remark \ref{con_new}
we need to show that
 $$\lambda_{max} \left(M(\tx_k)D_xf(\tx_k)+D_xf(\tx_k)^TM(\tx_k)+(w_{ij}^\nu \cdot \tf(\tx_k))_{i,j=1,\ldots,n}\right)+E_{\nu}\le -1.$$
We have $E_\nu\le \frac{1}{2}$, cf. (\ref{enu}). Hence, using the sublinearity of $\lambda_{max}$, we have
 \begin{eqnarray*}
 \lefteqn{\hspace{-1.3cm}\
 \lambda_{max} \left(M(\tx_k)D_xf(\tx_k)+D_xf(\tx_k)^TM(\tx_k)+(w_{ij}^\nu \cdot \tf(\tx_k))_{i,j=1,\ldots,n}\right)+E_{\nu}}\\
 &\le&
 \lambda_{max} \left(M(\tx_k)D_xf(\tx_k)+D_xf(\tx_k)^TM(\tx_k)+M'(\tx_k)\right)\\
 &&
 +\lambda_{max}((w_{ij}^\nu \cdot \tf(\tx_k))_{i,j=1,\ldots,n}-M'(\tx_k))+\frac{1}{2}\\
 &\le&-2+\frac{1}{2} +n \max_{1\le i\le j\le n}|w_{ij}^\nu \cdot \tf(\tx_k)-M_{ij}'(\tx_k)|
  \end{eqnarray*}
 where we have used (\ref{cond5}) and $\lambda_{max}(S)\le \|S\|_2\le n\|S\|_{max}$, cf. (\ref{2-eig}).

 Using $M_{ij}'(\tx_k)=\nabla_{\tx} M_{ij}(\tx_k)\cdot \tf(\tx_k)$, we obtain
 \begin{eqnarray*}
 \lefteqn{\hspace{-0.9cm}
 \lambda_{max} \left(M(\tx_k)D_xf(\tx_k)+D_xf(\tx_k)^TM(\tx_k)+(w_{ij}^\nu \cdot \tf(\tx_k))_{i,j=1,\ldots,n}\right)+E_{\nu}}\\
 &\le&-\frac{3}{2}+n \max_{1\le i\le j\le n}\|w_{ij}^\nu -\nabla_{\tx} M_{ij}(\tx_k)\|_1
 \cdot \|\tf(\tx_k)\|_\infty\\
 &\le&-\frac{3}{2}+nC^* h_\nu F\\
 &\le&-\frac{3}{2}+\frac{1}{2}=-1
 \end{eqnarray*}
 with (\ref{xfirst}), the definition of $F$ and using $h_\nu\le h^*_2=\frac{1}{2nFC^*}$.
\end{enumerate}
 This proves that the constraints are fulfilled and the optimization problem has a feasible solution.
\end{proof}

%
%\begin{lemma}
%Let $A\in\mathbb R^{n\times n}$. For each $\epsilon>0$ there is a $\delta>0$
%such that $B\in\mathbb R^{n\times n}$ with $\|B\|_1\le \delta$ implies $|\lambda_{max}(A+B)- \lambda_{max}(A)|<\epsilon$.
%\end{lemma}
%\begin{proof}
%The eigenvalues are roots of the characteristic polynomial
%$\det(A-\lambda I)=0$, which depends continuously on the coefficients of $A$ by Leibniz formula:
%$$\det C=\sum_{\sigma\in S_n}\mbox{sign}(\sigma)C_{1,\sigma(1)}\cdot\ldots\cdot C_{n,\sigma(n)}.$$
%\end{proof}

%If
%$$\lambda_{max}\left(M(\tx_k)D_xf(\tx_k)+D_xf(\tx_k)^TM(\tx_k)+M(\tx_k)\right)\le -\nu.$$
%Let $\epsilon:=\nu/2$, get $\delta$. Let
%$$2n Bh+\|w^{\cal S}_{ij}\|_1 B h^2\le \delta$$
%Then
%$$\lambda_{max}\left(M(\tx)D_xf(\tx)+D_xf(\tx)^TM(\tx)+M(\tx)\right)\le -\nu/2.$$

\noindent
{\bf Acknowledgement} The first author would like to thank Michael Hinterm\"uller and Thomas Surowiec for helpful discussions and comments regarding optimization.

{\small

}
\end{document}